\newtheorem{theorem}{Theorem}[section]
\newtheorem{lemma}{Lemma}[section]
\newtheorem{corollary}{Corollary}[section]
\numberwithin{equation}{section}
\theoremstyle{definition}
\theoremstyle{remark}
\newcommand{\brac}[1]{\left(#1\right)}
\newcommand{\norm}[2]{\left\|{#1}\right\|_{#2}}
\newcommand{\abs}[1]{\left|#1\right|}
\newcommand{\ba}{{\boldsymbol{a}}}
\newcommand{\bb}{{\boldsymbol{b}}}
\newcommand{\be}{{\boldsymbol{e}}}
\newcommand{\bk}{{\boldsymbol{k}}}
\newcommand{\bell}{{\boldsymbol{\ell}}}
\newcommand{\bp}{{\boldsymbol{p}}}
\newcommand{\bs}{{\boldsymbol{s}}}
\newcommand{\bx}{{\boldsymbol{x}}}
\newcommand{\by}{{\boldsymbol{y}}}
\newcommand{\bz}{{\boldsymbol{z}}}
\newcommand{\bW}{{\boldsymbol{W}}}
\newcommand{\brho}{{\boldsymbol{\rho}}}
\newcommand{\bsigma}{{\boldsymbol{\sigma}}}
\newcommand{\bphi}{{\boldsymbol{\phi}}}
\newcommand{\rd}{{\rm d}} 
\newcommand{\support}{{\rm support}} 
\def\ZZ{{\mathbb Z}}
\def\RR{{\mathbb R}}
\def\RRd{{\mathbb R}^d}
\def\NN{{\mathbb N}}
\def\NNd{{\NN}^d}
\def\NN{{\mathbb N}}
\def\RR{{\mathbb R}}
\def\FF{{\mathcal F}}
\def\NNd{{\mathbb N}^d}
\def\RRd{{\mathbb R}^d}
\def\RRm{{\mathbb R}^m}
\def\IIi{{\mathbb I}^\infty}
\def\NMO{{\mathbb N}_0^M}
\def\RRi{{\mathbb R}^\infty}
\def\RRM{{\mathbb R}^M}
\def\RRm{{\mathbb R}^m}
\def\Ee{{\mathcal E}}
\def\Ff{{\mathcal F}}
\def\Jj{{\mathcal J}}
\def\Ll{{\mathcal L}}
\def\Nn{{\mathcal N}}
\def\Oo{{\mathcal O}}
\def\ZZ{{\mathbb Z}}
\def\NN{{\mathbb N}}
\def\RR{{\mathbb R}}
\def\FF{{\mathbb F}}
\def\NNd{{\mathbb N}^d}
\def\RRd{{\mathbb R}^d}
\def\RRi{{\mathbb R}^\infty}
\def\supp{\operatorname{supp}}
\def\dv{\operatorname{div}}
\title{\sffamily Collocation approximation by deep neural ReLU networks for 
 parametric  and stochastic  PDEs with lognormal inputs } 
\author{Dinh D\~ung}
\affil{Information Technology Institute, Vietnam National University, Hanoi
	\protect\\
	144 Xuan Thuy, Cau Giay, Hanoi, Vietnam
	\protect\\
	Email: dinhzung@gmail.com}
\date{\today}
\begin{document}
\maketitle

\begin{abstract}
	We  obtained convergence rates of the collocation approximation by deep ReLU neural networks of  solutions to   elliptic PDEs with lognormal inputs, parametrized by $\by$ from the non-compact  set $\RRi$. The approximation error is measured in the norm of the Bochner space $L_2(\RRi, V, \gamma)$, where $\gamma$ is the infinite tensor product  standard Gaussian probability measure on $\RRi$ and $V$ is the energy space.
	We also obtained similar dimension-independent results  for the case when the  lognormal inputs are parametrized on $\RRM$ with  very large dimension $M$, and the approximation error  is measured in the $\sqrt{g_M}$-weighted  uniform norm of the Bochner space $ L_\infty^{\sqrt{g}}(\RRM, V)$, where $g_M$ is the  density function of the standard Gaussian probability measure on $\RRM$.

	\medskip
	\noindent
	{\bf Keywords and Phrases}: High-dimensional approximation; Collocation approximation; Deep ReLU neural networks; Parametric  elliptic PDEs; Lognormal inputs.
	
	\medskip
	\noindent
	{\bf Mathematics Subject Classifications (2010)}: 65C30, 65D05, 65D32, 65N15, 65N30, 65N35. 
	
\end{abstract}

\section{Introduction}

Partial differential equations (PDEs) with parametric and stochastic inputs are a common model used  in science
and engineering. Stochastic nature reflects the uncertainty in various parameters
presented in the physical phenomenon modelled  by the equation.
A central problem of computational uncertainty quantification is  efficient   numerical approximation for  parametric and stochastic PDEs  which has been of great interest and achieved  significant progress in recent decades. There is a  large number of non-deep-neural-network papers on this topic to mention all of them. We  point out just some works \cite{BCDC17,BCDM17,BCM17,CCDS13,CCS13,CCS15,CoDe15a,CDS10,CDS11,Dung19,Dung21, EST18,HoSc14,ZDS19,ZS20} which are directly related to our paper.  In particular,  collocation approximations which are based on a finite number of particular solvers to parametric and stochastic PDEs, were considered in \cite{CCS13,CCS15,CoDe15a,Dung19,Dung21,DNSZ,EST18,ZDS19}.

The approximation universality of neural networks has been achieved a basis understanding since the 1980's (\cite{Bar91,Cyben89,Funa90,HSW89}).
Deep neural networks in recent years have been rapidly developed in theory and applications  to a wide range of fields due to their advantage  over shallow ones. 
Since their application range is getting wider, theoretical analysis discovering reasons of these significant
practical improvements attracts special attention  \cite{ ABMM17,DDF.19,MPCB14,Te15,Te16}. 
In recent years, there has been a number of interesting papers that addressed the role of depth and architecture of deep neural networks for non-adaptive and adaptive approximation of  functions   having a particular regularity 
\cite{AlNo20,EWa18, GKNV19,GKP20,GPEB19,Mha96,LSYZ21, SYZ20,PeVo18,Ya17a,Ya18}. High-dimensional approximations by deep neural networks have been studied in \cite{MoDu19, Suzu18,DN20,DKT21}, and  their applications to high-dimensional PDEs in  \cite{EGJS18,GS20,GS21,GH21,HOS21,OSZ21,SS18}.  Most of  these papers employed  the rectified linear unit (ReLU) as the activation function of deep neural networks  since the ReLU is a simple and preferable  in many applications. The output of such a deep neural network is a continuous piece-wise linear function which is easily and cheaply  computed. The reader can consult the recent survey papers  \cite{DHP21,Pet20} for various problems and aspects of neural network approximation and bibliography.

Recently,  a number of papers have been devoted to various problems and methods of  deep neural network approximation for parametric and stochastic PDEs such as dimensionality reduction \cite{TB18}, deep neural network expression rates for generalized polynomial chaos expansions  (gpc) of solutions to parametric elliptic PDEs \cite{DNP21,SZ19}, reduced basis methods \cite{KPRS21} the problem of learning the discretized parameter-to-solution map in practice \cite{GPR.20}, Bayesian PDE inversion \cite{HOS21,HSZ20,OSZ20}, etc. Note that except \cite{DNP21} all of these papers treated parametric and stochastic PDEs with affine inputs on the compact set $\IIi:= [-1,1]^\infty$.
 The authors of paper \cite{SZ19}  proved dimension-independent deep neural network expression rate bounds of the uniform approximation of solution to parametric elliptic PDE with affine inputs on $\IIi$ based on   $n$-term truncations of the  non-orthogonal Taylor  gpc expansion. The construction of approximating deep neural networks relies on weighted summability of the Taylor gpc expansion coefficients of the solution which is derived from its analyticity. The paper \cite{DNP21} investigated  non-adaptive methods of deep ReLU neural network approximation of the solution $u$ to  parametric and stochastic elliptic PDEs with lognormal inputs on non-compact set $\RRi$. The approximation error is measured in the norm of the Bochner space $L_2(\RRi, V, \gamma)$, where $\gamma$ is the tensor product   standard Gaussian probability on $\RRi$ and $V$ is the energy space. The approximation is based  on an $m$-term truncation of the  Hermite gpc of $u$. Under a certain assumption on $\ell_q$-summability ($0<q<\infty$) for the lognormal inputs,  it was proven that for every integer $n > 1$, one can  construct  a non-adaptive compactly supported deep ReLU neural network $\bphi_n$ of size $\le n$ on $\RRm$ with $m = \Oo(n/\log n)$, having $m$ outputs so that the summation  constituted by replacing Hermite polynomials in the $m$-term truncation by these $m$ outputs approximates $u$ with the error bound 
$\Oo\brac{\left(n/\log n\right)^{-1/q}}$.	
The authors of \cite{DNP21} also obtained some results on similar problems for parametric and stochastic elliptic PDEs with affine inputs, based on the Jacobi and Taylor gpc expansions.

In the present paper, we are interested in    constructing  deep ReLU neural networks for  collocation approximation of 
 the solution to parametric  elliptic PDEs with lognormal inputs.  We study the convergence rate of this approximation in terms of the size of  deep ReLU neural networks.

Let $D \subset \RR^d$ be a bounded  Lipschitz domain.  Consider the diffusion elliptic equation 
\begin{equation} \label{ellip}
- \dv (a\nabla u)
\ = \
f \quad \text{in} \quad D,
\quad u|_{\partial D} \ = \ 0, 
\end{equation}
for  a given  right-hand side $f$ and 
 diffusion coefficient $a$ as  functions on $D$.
Denote by $V:= H^1_0(D)$ the so-called energy space  of all those functions from the Sobolev space $H^1(D)$ that have compact support in $D$.  Let $H^{-1}(D)$ be the dual space of $V$. Assume that  $f \in H^{-1}(D)$ (in what follows this preliminary assumption always holds without mention). If $a \in L_\infty(D)$ satisfies the ellipticity assumption
\begin{equation} \nonumber
0<a_{\min} \leq a \leq a_{\max}<\infty,
\end{equation}
by the well-known Lax--Milgram lemma, there exists a unique 
solution $u \in V$  to the (non-parametric) equation~\eqref{ellip}  in the weak form
\begin{equation} \nonumber
\int_{D} a\nabla u \cdot \nabla v \, \rd \bx
\ = \
\langle f , v \rangle,  \quad \forall v \in V.
\end{equation}

Partial differential equations with parametric and stochastic inputs are a common model used  in science
and engineering.  
For the equation~\eqref{ellip},
we consider diffusion coefficients having a parametrized form $a=a(\by)$, where $\by=(y_j)_{j \in \NN}$
is a sequence of real-valued parameters ranging in the set 
$\RRi$.
Denote by $u(\by)$ the solution to the 
parametrized  diffusion elliptic equation 
\begin{equation} \label{p-ellip}
- {\rm div} (a(\by)\nabla u(\by))
\ = \
f \quad \text{in} \quad D,
\quad u(\by)|_{\partial D} \ = \ 0. 
\end{equation}	
The resulting solution operator maps
$\by\in \RRi \mapsto u(\by)\in V$. The goal is to 
achieve numerical 
approximation of this complex map by a small number of parameters with a
guaranteed error in a given norm. Depending on the 
nature of the modeled object, the parameter $\by$ may be 
either deterministic or random.
In the present paper, we consider the  so-called lognormal case when the diffusion coefficient $a$  is of the form
\begin{equation} \label{lognormal}
a(\by)=\exp(b(\by))
\end{equation}
with $ b(\by)$ in the infinite-dimensional form:
\begin{equation} \label{b(y)infinite}
\ b(\by)=\sum_{j = 1}^\infty y_j\psi_j, \quad \by \in \RRi,
\end{equation}
where the $y_j$ are i.i.d. standard Gaussian random 
variables  and $\psi_j \in L_\infty(D)$. We also consider the finite-dimensional form when
\begin{equation} \label{b(y)M}
\ b(\by)=\sum_{j = 1}^M y_j\psi_j, \quad \by \in \RRM,
\end{equation}
with finite but very large dimension $M$. Notice that for a fixed $\by$ both the cases \eqref{b(y)infinite} and \eqref{b(y)M} of  equation \eqref{p-ellip} satisfy the ellipticity assumption, and therefore there exists  exists a unique 
solution $u(\by) \in V$  to the equation~\eqref{p-ellip}  in the weak form. However, there is no the  uniform ellipticity with respect to $\by$ since $\RRi$ and $\RRM$ are not compact sets.

We briefly describe the main results of  the present paper.  

We investigate  non-adaptive collocation methods of high-dimensional deep ReLU neural network approximation of the solution $u(\by)$ to  parametrized diffusion elliptic PDEs \eqref{p-ellip} with lognormal inputs \eqref{lognormal} in the infinite-dimensional case  \eqref{b(y)infinite} and finite-dimensional case \eqref{b(y)M}. 
In the infinite-dimensional case  \eqref{b(y)infinite}, the approximation error is measured in the norm of the Bochner space $ L_2(\RRi, V, \gamma)$, where $\gamma$ is the infinite tensor product   standard Gaussian probability on $\RRi$.
Assume that there exists an increasing sequence of positive numbers strictly larger than one
$\brho = \brac{\rho_{j}}_{j \in \NN}$ such that for some $0<q<2$,
$$
\norm{\sum _{j \in \NN} \rho_j |\psi_j|}{L_\infty(D)} <\infty \ \ {\rm and} \ \ \brho^{-1}= \brac{\rho_{j}^{-1}} _{j \in \NN}\in {\ell_q}(\NN).
$$ 
Then, given
an arbitrary number $\delta$ with $0 < \delta < \min \brac{1, 1/q -1/2}$,
for every integer $n > 1$, we can  construct  a  deep ReLU neural network 
$\bphi_n:= \big(\phi_j\big)_{j=1}^m$ on $\RRm$ with $m= \Oo \brac{n^{1 - \delta}}$ of size at most $n$ and a sequence of points $Y_n:=\brac{\by^j}_{j=1}^m \subset \RRm$ so that

\begin{itemize}
	\item [{\rm (i)}]
	The deep ReLU neural network $\bphi_n$ and  sequence of points $Y_n$ are independent of $u$;
	\item [{\rm (ii)}] 
	The  output dimension of $\bphi_n$ is $m = \Oo \brac{n^{1 - \delta}}$;	
	\item [{\rm (iii)}] 
	The  depth of $\bphi_n$ is   $\Oo(n^\delta)$;
	\item [{\rm (iv)}] 
	The  components $\phi_j$, $j = 1,...,m$, of $\bphi_n$ are deep ReLU neural networks on 
	$\RR^{m_j}$ with $m_j  = \Oo(n^\delta)$, having support of contained in the super-cube $[-T,T]^{m_j}$ with 
	$T =  \Oo \brac{n^{1 - \delta}}$;
	\item [{\rm (v)}]
	If $\Phi_j$ is the extension of $\phi_j$ to the whole $\RRi$ by $\Phi_j(\by) =\phi_j\brac{\big(y_j\big)_{j=1}^{m_j}}$ for 
	$\by = \big(y_j\big)_{j\in \NN} \in \RRi$,
	 the collocation approximation of $u$ by the function
	$$
\Phi_n u:=	\sum_{j=1}^m u\brac{\by^j} \Phi_j,
	$$
	which is  based on the $m$ solvers $\brac{u\brac{\by^j}}_{j=1}^m$ and the deep ReLU network $\bphi_n$,
	gives the twofold error estimates
	\begin{align} \label{errors}
	\norm{u- \Phi_n u}{L_2(\RRi, V, \gamma)} 
	= \Oo\brac{m^{- \brac{\frac{1}{q} - \frac{1}{2}}}} 
	=\Oo\brac{n^{- (1-\delta)\brac{\frac{1}{q} - \frac{1}{2}}}}.
	\end{align}
\end{itemize}

We also obtained similar results in manner of the items (i)--(v) in the finite-dimensional case  \eqref{b(y)M} with  the approximation error  measured in the $\sqrt{g_M}$-weighted uniform norm of the Bochner space $ L_\infty^{\sqrt{g}}(\RRM, V)$, where $g_M$ is the  density function of the standard Gaussian probability measure on $\RRM$.

These results are derived from results on deep ReLU neural network collocation approximation of functions in Bochner spaces 
related to a general separable Hilbert space and standard Gaussian probability measures based on weighted $\ell_2$-summabilities of the Hermite gpc expansion coefficients of functions (see Section \ref{Deep ReLU neural network approximation in Bochner spaces} for details).  

Notice that the error bound in $m$ in \eqref{errors}  is the same as the error bound  of  the   collocation approximation of $u$ by the sparse-grid Lagrange gpc interpolation  based on $m$ the same particular solvers  $\brac{u\brac{\by^j}}_{j=1}^m$, which so far is the best known result \cite[Corollary 3.1]{Dung21}. Moreover, the convergence rate $(1-\delta)(1/q - 1/2)$ with arbitrarily small $\delta > 0$ in terms of the size  of the deep ReLU network in the collocation approximation,  is comparable with the convergence rate $1/q - 1/2$  with respect to the number  of particular solvers  in the collocation approximation by  sparse-grid Lagrange gpc interpolation. This is a crucial difference between the results of the present paper and of \cite{DNP21} which proved the convergence rate of the deep ReLU network approximation of solutions to parametrized diffusion elliptic PDEs \eqref{p-ellip} with lognormal inputs \eqref{lognormal} based on a different input information --  the coefficients of  Hermite gpc  expansion  in its finite truncations. Although that convergence rate is sharper than one in \eqref{errors}, in general, it is well-known that collocation approximations are more important, difficult and applicable than those  using  spectral information about the coefficients of an orthonormal expansion. The extension of  the results  (i)--(v) to the Bochner space $ L_\infty^{\sqrt{g}}(\RRM, V)$ is also an important difference of our contribution comparing with \cite{DNP21}.

We would like to emphasize that the motivation of this paper is to establish approximation results which should  show posibilities of non-adaptive collocation approximation by deep ReLU neural networks and  convergence rates of approximation for the parametrized diffusion elliptic equation \eqref{p-ellip} with lognormal inputs, and we do not consider the numerical aspect of the problem. 
The results  themselves do not give a practically realizable approximation because they do not cover the approximation of the coefficients which are particular solvers at certain points of the spatial variables.  Moreover, the approximant $\Phi_n u$ is not a real deep ReLU networks, but just a combination of these particular solvers and the components of a deep ReLU network. It would be interesting to investigate the problem of full deep ReLU neural network approximation of the solution $u$ to  parametric and stochastic elliptic PDEs by combining the spatial and parametric domains based on fully discrete approximation  in \cite{BCDC17,Dung21}. This problem will be discussed in a forthcoming paper. 

The paper is organized as follows. 
In Section \ref{Deep ReLU neural networks}, we present a necessary knowledge  about deep ReLU neural networks.
Section \ref{Deep ReLU neural network approximation in Bochner spaces}
is devoted to collocation methods of deep ReLU neural network approximation of functions in Bochner spaces 
$L_2(\RRi, X, \gamma)$ or in $L_2( \RRM,X,\gamma)$ related to a separable Hilbert space $X$  and the tensor product standard Gaussian probability measure $\gamma$.  
In Section \ref{lognormal inputs}, we apply the results in the previous section to the collocation approximation by deep ReLU neural networks  of the solution $u$ to the parametrized elliptic PDEs \eqref{p-ellip} with lognormal inputs \eqref{lognormal} on  in the infinite case \eqref{b(y)infinite} and  finite case \eqref{b(y)M}.

\medskip
\noindent
{\bf Notation} \  As usual, $\NN$ denotes the natural numbers, $\ZZ$  the integers, $\RR$ the real numbers and $ \NN_0:= \{s \in \ZZ: s \ge 0 \}$.
We denote  $\RR^\infty$ the
set of all sequences $\by = (y_j)_{j\in \NN}$ with $y_j\in \RR$.  For a set $G$, we denote by
$|G|$ the cardinality of $G$. 
If  $\ba= (a_j)_{j \in \Jj}$  is a sequence of positive numbers with  any index set $\Jj$, then we use the notation 
$\ba^{-1}:= (a_j^{-1})_{j \in \Jj}$.
We use letters $C$  and $K$ to denote general 
positive constants which may take different values, and 
$C_{\alpha,\beta,...}$ and $K_{\alpha,\beta,...}$  when we 
want to emphasize the dependence of these  constants on 
$\alpha,\beta,...$, or when this dependence is 
important in a particular situation.

For convenience to the reader, we list some specific notations and definitions  which widely used in the present paper and indicate where they are introduced. 

Section \ref{Deep ReLU neural networks}: The symbols $W(\Phi)$, $L(\Phi)$ and $\supp(\Phi)$ denote the size, the depth and the support of the deep ReLU neural network $\Phi$, respectively;  
$\sigma(t):= \max\{t,0\}$ is ReLU activation function. 

Section \ref{Tensor product Gaussian measures and Bochner spaces}: Denote by $\FF$  the set of all sequences of non-negative integers $\bs=(s_j)_{j \in \NN}$ such that their support $\supp (\bs):= \{j \in \NN: s_j >0\}$ is a finite set. Letter $J$ denotes either $\infty$ or $M \in \NN$; the set $U$ is defined in \eqref{set:U}, the set $\Ff$ in \eqref{set:Ff} and the set $\Nn$ in \eqref{set:Nn}:  $\gamma$ and $\gamma_M$ are the standard Gaussian meesures in $\RRi$ and $\RRM$, respectively.  For $\bs \in \FF$, put $|\bs|_1:= \sum_{j \in \NN} s_j$ and $|\bs|_0:= |\supp (\bs)|$. For $\bs, \bs' \in \Ff$,  the inequality $\bs' \le \bs$ means that $s_j' \le s_j$, $j \in \Nn$.   A set
$\bsigma=(\sigma_\bs)_{\bs \in \Ff}$ with $\sigma_\bs \in \RR$ is called increasing if 
$\sigma_{\bs'} \le \sigma_\bs$ for $\bs' \le \bs$.  The Bochner space $\Ll(U,X)$ is defined in \eqref{space:Ll(U,X)};  the Bochner spaces  $L_2(U,X,\gamma)$ and $L_{\infty}^{\sqrt{g}}(\RRM,X)$ are given by \eqref{L_2(U,X,gamma)} and \eqref{L_{infty}^{sqrt{g}}}, respectively; In \eqref{hermite},  $H_\bs$ is defined as the $\bs$th Hermite  orthnormal polynomial and   $v_\bs$ as the $\bs$th coefficient of  the Hermite  gpc expansion of $v$. 

Section \ref{ Hermite gpc interpolation approximation}: $Y_m = (y_{m;k})_{k \in \pi_m}$ is the increasing sequence of  the $m+1$ roots of the Hermite polynomial $H_{m+1}$;   $I_m$ is the  Lagrange intepolation operator defined by \eqref{I_(v)}; 
$\lambda_m$ is the Lebesgue constant defined by \eqref{lambda_m}; $\Delta_\bs$ is the tensor product operator defined by \eqref{Delta_bs(v)}; $I_\Lambda$  is the gpc interpolation operator defined by \eqref{I_Lambda}; the set $\bp(\theta, \lambda):= \brac{p_\bs(\theta, \lambda) }_{\bs \in \Ff}$ is defined  by \eqref{[p_s]}; the set $\Lambda(\xi)$ is defined by  \eqref{Lambda(xi)} and the set $G(\xi)$ by \eqref{G(xi):=}.

\section{Deep ReLU neural networks}
\label{Deep ReLU neural networks}

In this section, we present some auxiliary knowledge on deep ReLU neural networks which will be used as a tool of approximation. As in \cite{Ya17a}, we will use such deep feed-forward neural networks that
allows  connections between neurons  in a layer with neurons in any preceding  layers (but not in the same layer). 
 The ReLU activation function  is defined by 
$\sigma(t):= \max\{t,0\}, t\in \RR$.  
We denote:
$\sigma(\bx):= (\sigma(x_1),\ldots, \sigma(x_d))$ for $\bx=(x_1,\ldots,x_d) \in \RRd$.

Let us recall a standard definition of  deep ReLU  neural network and relevant terminology.  	Let $d,L\in \NN$, $L\geq 2$, $N_0=d$,  and $N_1,\ldots,N_{L}\in \NN$. Let 
	$\bW^\ell=\brac{w^\ell_{i,j}}\in \RR^{N_\ell\times \brac{\sum_{i=1}^{\ell-1}N_i}}$, $\ell=1,\ldots,L$, be 
	an $N_\ell\times \brac{\sum_{i=1}^{\ell-1}N_i}$ matrix, and $\bb^\ell =(b^\ell_j)\in \RR^{N_\ell}$.    A ReLU  neural network  $\Phi$ (on $\RRd$) with input dimension $d$, output dimension  $N_L$ and $L$ layers
	is  called a sequence of matrix-vector tuples
	$$
	\Phi=\brac{(\bW^1,\bb^1),\ldots,(\bW^L,\bb^L)},
	$$
	in which the following
	computation scheme is implemented:
	\begin{align*}
	\bz^0&: = \bx \in \RR^d;
	\\
	\bz^\ell &: = \sigma\brac{\bW^{\ell}\brac{\bz^0,\ldots,\bz^{\ell-1}}^{{\rm T}}+\bb^\ell}, \ \ \ell=1,\ldots,L-1;
	\\
	\bz^L&:= \bW^L{\brac{\bz^0,\ldots,\bz^{L-1}}^{{\rm T}}} + \bb^L.
	\end{align*}
	We call $\bz^0$ the input  and with an 
	ambiguity we use the notation   
	$\Phi(\bx):= \bz^L$ for the output of $\Phi$ which is an $L$-dimensional vector-function on $\RRd$. In some places we identify a ReLU neural network  with its output. 
	We adopt the following terminology.
	\begin{itemize}
		\item The number of layers $L(\Phi)=L$  is the depth of $\Phi$;
		\item The number of nonzero $w^\ell_{i,j}$ and $b^\ell_j$  is the  size of $\Phi$ and denoted by $W(\Phi)$;
		\item When $L(\Phi) \ge 3$, $\Phi$ is called a deep ReLU neural network, and otherwise, a shallow ReLU neural network.
		\item If $\Phi(\bx)=(\phi_j(\bx))_{j=1}^L$, the support of the deep ReLU neural network $\Phi$ is defined as $\bigcup_{j=1}^L \supp(\phi_j)$ and denoted by $\supp(\Phi)$.
	\end{itemize}

There are two basic operations which  neural networks allow for. This is the parallelelization of several  neural networks and the concatenation of two neural networks. The reader can find for instance, in  \cite{GKP20} (see also \cite{DHP21,Pet20}) for detailed decriptions as well as the following two lemmata  on these operations.

\begin{lemma}[Parallelization]\label{lem:parallel}
	Let $N\in \NN$,  $\lambda_j\in \RR$, $j=1,\ldots,N$. Let $\Phi_j$, $j=1,\ldots,N$ be deep ReLU neural networks with input dimension $d$. Then  we can explicitly construct a deep ReLU neural network  denoted by $\Phi$ so that 
	$$
	\Phi(\bx)
	=
	\sum_{j=1}^N\lambda_j \Phi_j(\bx),\quad \bx\in \RR^d.
	$$ 
	Moreover, we have 
	$$
	W(\Phi)\ \le \ \sum_{j=1}^N W(\Phi_j)
	\qquad \text{and}\qquad
	L(\Phi) \ = \ \max_{j=1,\ldots,N} L(\Phi_j). 
	$$
	The deep ReLU neural network $\Phi$ is called the parallelization of  $\Phi_j$, $j=1,\ldots,N$.
\end{lemma}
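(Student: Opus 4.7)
The plan is to construct $\Phi$ by block-diagonally stacking the $N$ subnetworks $\Phi_j$ and folding the linear combination $\sum_{j=1}^N \lambda_j$ into the final affine output layer. The crucial structural feature I will exploit is that the paper's definition of a ReLU network permits skip connections: each weight matrix $\bW^\ell$ acts on the concatenation $(\bz^0,\ldots,\bz^{\ell-1})$ of all previous hidden states, not only on $\bz^{\ell-1}$. This renders depth alignment essentially costless.

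First I would set $L^* := \max_{j=1,\ldots,N} L_j$ as the target depth. For each $\Phi_j$ with data $\brac{(\bW^{j,1},\bb^{j,1}),\ldots,(\bW^{j,L_j},\bb^{j,L_j})}$, I regard its first $L_j - 1$ pairs as the ``hidden-layer'' data and reserve its last pair $(\bW^{j,L_j},\bb^{j,L_j})$, which is a purely affine map (no ReLU), for the final output layer of $\Phi$. For $\ell = 1, \ldots, L^* - 1$, I define $\bW^\ell$ of $\Phi$ as a block-diagonal matrix whose $j$-th diagonal block equals $\bW^{j,\ell}$ whenever $\ell < L_j$ and is a zero block otherwise; the biases $\bb^\ell$ are the concatenation of the $\bb^{j,\ell}$ (with zeros for $j$ with $\ell \ge L_j$). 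Because each $\bW^{j,\ell}$ only reads the columns corresponding to the prior states of the $j$-th subnetwork, the off-diagonal blocks vanish identically, so they contribute nothing to the count $W(\Phi)$.

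For the output layer $\ell = L^*$, I use the skip connections to read, for each $j$, the tuple $(\bz^0_j,\bz^1_j,\ldots,\bz^{L_j-1}_j)$ sitting inside $(\bz^0,\ldots,\bz^{L^*-1})$, and set the single row-block of $\bW^{L^*}$ equal to $\sum_{j=1}^N \lambda_j \bW^{j,L_j}$ (placed in the appropriate columns) with bias $\sum_{j=1}^N \lambda_j \bb^{j,L_j}$. This affine operation produces exactly $\sum_{j=1}^N \lambda_j \Phi_j(\bx)$, since each $\Phi_j(\bx) = \bW^{j,L_j}\brac{\bz^0_j,\ldots,\bz^{L_j-1}_j}^{\rm T} + \bb^{j,L_j}$. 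The nonzero entries in this last layer are in bijection with a subset of the nonzero entries of the individual final layers, so they contribute at most $\sum_{j=1}^N \|\bW^{j,L_j}\|_0 + \|\bb^{j,L_j}\|_0$ to $W(\Phi)$. Summing over layers yields $W(\Phi) \le \sum_{j=1}^N W_j$, while the depth is $L^* = \max_j L_j$ by construction.

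The main obstacle I anticipate is the depth alignment: if one insisted on the classical feed-forward architecture without skip connections, networks with $L_j < L^*$ would need to be padded through $L^* - L_j$ extra layers by the identity trick $t = \sigma(t) - \sigma(-t)$, inflating the size by an additive term proportional to the output width times the depth gap. The skip-connection convention adopted in the paper's definition of a ReLU network is exactly what eliminates this overhead and produces the clean bound $W(\Phi) \le \sum_{j=1}^N W_j$ stated in the lemma; beyond this observation, the remainder of the argument is bookkeeping.
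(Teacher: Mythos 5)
Your construction is correct and is essentially the standard parallelization argument that the paper does not reproduce but delegates to the cited references: block-diagonal stacking of the hidden layers plus a merged affine output layer, with the skip-connection convention absorbing the depth mismatch so that no identity-padding (and hence no size overhead) is needed. The only minor imprecision is that the columns of the merged output matrix corresponding to the shared input $\bz^0$ may receive contributions from several $\lambda_j\bW^{j,L_j}$ simultaneously, so the nonzero entries there are not literally in bijection with a subset of the individual ones; but the number of nonzeros of a sum of matrices is still at most the sum of the individual counts, so the bound $W(\Phi)\le\sum_{j=1}^N W_j$ is unaffected.
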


\begin{lemma}[Concatenation]\label{lem-concatenation} 
	Let $\Phi_1$ and $\Phi_2$ be two ReLU neural networks   such that output layer of $\Phi_1$ has the same dimension as input layer of $\Phi_2$. Then, we can explicitly construct a ReLU neural network $\Phi$ such that $\Phi(\bx)=\Phi_2(\Phi_1(\bx))$ for $\bx\in \RR^d$. Moreover we have
	$$W(\Phi)\leq 2W(\Phi_1)+2W(\Phi_2) \qquad \text{and}\qquad L(\Phi) = L(\Phi_1)+L(\Phi_2).$$
	The deep ReLU neural network $\Phi$ is called the concatenation of  $\Phi_1$ and $\Phi_2$.
\end{lemma}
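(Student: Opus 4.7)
The plan is to build $\Phi$ so that its first $L_1$ layers implement $\Phi_1$ and its last $L_2$ layers implement $\Phi_2$, bridged by a small trick at the interface. The only obstruction is that the output of $\Phi_1$ is produced by an affine layer (no ReLU), whereas the first hidden layer of $\Phi_2$ expects to feed its input $\by$ into $\sigma\brac{\bW^1_2\by+\bb^1_2}$. To bridge this I would use the identity $t=\sigma(t)-\sigma(-t)$, which reconstructs any real number from two ReLU activations.

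Concretely, for layers $1,\ldots,L_1-1$ I copy the weights and biases of $\Phi_1$ verbatim. At layer $L_1$ I produce a vector of width $2N_{L_1,1}$ whose upper half is $\sigma(\by)$ and lower half is $\sigma(-\by)$, where $\by$ is the original linear output of $\Phi_1$; this is implemented by stacking $\bW^{L_1}_1$ above $-\bW^{L_1}_1$ (and likewise for the bias) and then applying $\sigma$. For layer $L_1+1$, which now plays the role of $\Phi_2$'s first hidden layer, I apply $\sigma\brac{\bW^1_2\brap{\sigma(\by)-\sigma(-\by)}+\bb^1_2}$, implemented as two sign-flipped copies of $\bW^1_2$ reading from the two halves of $\bz^{L_1}$. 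For each subsequent layer $L_1+k$ with $k=2,\ldots,L_2$, I retain every weight block of $\Phi_2$'s layer $k$ that reads from $\Phi_2$'s hidden layers $1,\ldots,k-1$, while every block that previously read from $\Phi_2$'s input $\by$ is duplicated with a sign flip and redirected, via the skip-connection mechanism built into the paper's layer definition, to read from $\bz^{L_1}$; all entries at these upper layers corresponding to the $\Phi_1$-levels $0,\ldots,L_1-1$ are set to zero. A straightforward induction on $\ell$ shows $\Phi(\bx)=\Phi_2(\Phi_1(\bx))$ for every $\bx\in\RR^d$.

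For the size count, layers $1,\ldots,L_1-1$ of $\Phi$ contribute exactly the nonzero entries of the corresponding layers of $\Phi_1$, while layer $L_1$ contributes at most twice the nonzeros of $\Phi_1$'s last layer, for a total of at most $2W(\Phi_1)$ on the lower half. On the upper half, the only modification relative to $\Phi_2$ is that every weight or bias entry that interacted with $\Phi_2$'s input is duplicated, yielding at most $2W(\Phi_2)$ nonzero entries. Summing gives $W(\Phi)\le 2W(\Phi_1)+2W(\Phi_2)$, and by construction the depth is $L(\Phi)=(L_1-1)+1+L_2=L_1+L_2$.

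The main step requiring care is the bookkeeping of $\Phi_2$'s skip connections: since the paper's definition allows layer $k$ of $\Phi_2$ to read from its input $\by$ together with every previous hidden layer, each such reference must be rerouted in the concatenated network to the encoded copy at level $L_1$ with the corresponding columns duplicated and sign-flipped, while references to $\Phi_2$'s hidden layers $1,\ldots,k-1$ are left unchanged up to a reindexing shift by $L_1$. Once this accounting is in place, the identity $\Phi(\bx)=\Phi_2(\Phi_1(\bx))$ and both the size and depth bounds fall out at once.
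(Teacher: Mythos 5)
Your construction is correct and is exactly the standard ``sparse concatenation'' argument (bridging the affine output of $\Phi_1$ into $\Phi_2$ via $t=\sigma(t)-\sigma(-t)$, doubling only the interface weights and the input-reading blocks of $\Phi_2$) that the paper itself does not reprove but imports from the cited references. The bookkeeping of the skip connections and the counts $W(\Phi)\le 2W(\Phi_1)+2W(\Phi_2)$, $L(\Phi)=L(\Phi_1)+L(\Phi_2)$ all check out.
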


The following lemma is a direct consequence of  \cite[Proposition 3.3]{SZ19}.

\begin{lemma} \label{lem:multi}
Let  $\bell \in \NNd$. For  every $\delta \in (0,1)$, we can explicitly construct  a deep  ReLU neural network  $\Phi_P$  on $\RRd$ so that 
	$$
	\sup_{ \bx \in [-1,1]^d} \Bigg|\prod_{j=1}^d x_j^{\ell_j} - \Phi_P(\bx) \Bigg| \leq \delta. 
	$$
	Furthermore, if $x_j=0$ for some $j\in \{1,\ldots,d\}$ then $\Phi_P(\bx)=0$ and there exists a constant $C>0$ independent of $\delta$, $d$ and $\bell$ such that
	$$
	W(\Phi_P) \leq C |\bell|_1\log (|\bell|_1\delta^{-1}) 
	\qquad \text{and}\qquad
	L(\Phi_P)  \leq C\log |\bell|_1\log(|\bell|_1\delta^{-1}) \,.
	$$
\end{lemma}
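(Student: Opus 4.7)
\textbf{Proof proposal for Lemma \ref{lem:multi}.} My plan is the standard Yarotsky-type construction, built in three stages: first a squaring network, then a two-factor multiplication network via polarization, and finally a balanced binary tree that combines $d$ factors. The central quantitative task is to track how the per-multiplication error accumulates through $\lceil \log_2 d\rceil$ tree levels and to choose the squaring tolerance accordingly so that the final bounds $W(\Phi_P)\le Cd\log(d\delta^{-1})$ and $L(\Phi_P)\le C\log d\log(d\delta^{-1})$ come out correctly.

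For the first stage I would exploit Yarotsky's observation that the tent map $g(t)=2\sigma(t)-4\sigma(t-\tfrac{1}{2})+2\sigma(t-1)$ can be iterated to build a sawtooth function whose linear interpolant approximates $t\mapsto t^2$ on $[0,1]$ with error $2^{-2k}$ using a ReLU network of depth and size $\mathcal{O}(k)$. Reflecting across $0$ extends this to a network $\Phi_{\mathrm{sq}}$ on $[-1,1]$ with the same bounds and, crucially, $\Phi_{\mathrm{sq}}(0)=0$. From this, the polarization identity
\begin{equation*}
xy=\tfrac{1}{2}\bigl((x+y)^2-x^2-y^2\bigr)
\end{equation*}
yields a two-factor multiplication network $\Phi_{\times}$ by parallelizing three copies of $\Phi_{\mathrm{sq}}$ and taking an affine combination (Lemma \ref{lem:parallel}). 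After rescaling inputs to $[-1,1]$, a tolerance $\epsilon$ for $\Phi_{\mathrm{sq}}$ produces a product error $\le C\epsilon$ for $|x|,|y|\le 1$, while preserving $\Phi_{\times}(x,0)=\Phi_{\times}(0,y)=0$ by the zero-preserving property of $\Phi_{\mathrm{sq}}$.

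For the third stage I would arrange the $d$ inputs into a balanced binary tree of depth $K=\lceil\log_2 d\rceil$. At level $k$ there are at most $\lceil d/2^k\rceil$ copies of $\Phi_{\times}$, all operating on values in $[-1,1]$ since the true intermediate products satisfy $|\prod x_j|\le 1$. Concatenating these levels (Lemma \ref{lem-concatenation}) gives a network whose depth is $K\cdot L(\Phi_{\times})$ and whose size is $\sum_{k=1}^{K}\lceil d/2^k\rceil\cdot W(\Phi_{\times})\le d\cdot W(\Phi_{\times})$. Choosing the squaring tolerance to be $\epsilon=c\delta/d$ for a small absolute $c$ forces $L(\Phi_{\mathrm{sq}}),W(\Phi_{\mathrm{sq}})=\mathcal{O}(\log(d\delta^{-1}))$, which after the tree assembly gives exactly the desired bounds.

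The main obstacle is the error propagation: at each internal node we compute $\Phi_{\times}(\tilde u,\tilde v)$ where $\tilde u,\tilde v$ are approximations, not exact products, so I must control both the substitution error $|\tilde u \tilde v - uv|\le |\tilde u - u|+|\tilde v - v|$ (using $|u|,|v|\le 1$) and the per-node approximation error $\le C\epsilon$. An induction on the tree level $k$ then yields a total error bounded by $2^k\cdot C\epsilon\le Cd\epsilon$ at the root, and the choice $\epsilon=c\delta/d$ makes this $\le\delta$. To secure the zero-preserving property at the end, I would observe that any input $x_{j_0}=0$ forces one of the two-factor subproducts containing index $j_0$ to be evaluated as $\Phi_{\times}(\cdot,0)=0$, and by induction this zero propagates up to the root, so $\Phi_P(\bx)=0$. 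I would then verify that \emph{no step of this construction depends on which coordinate is zero}, so a single fixed network does the job.
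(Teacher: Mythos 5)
Your construction is correct and coincides with the one the paper relies on: the paper does not prove Lemma~\ref{lem:multi} itself but quotes it from \cite[Proposition 3.3]{SZ19}, whose proof is exactly this Yarotsky squaring network, a polarization-based two-factor multiplier preserving zeros, and a balanced binary product tree with per-node tolerance $\epsilon\sim\delta/d$, yielding the same size and depth bounds. The only point to tighten is that the inputs to the internal $\Phi_{\times}$ nodes are approximate products lying in $[-1-C\epsilon,\,1+C\epsilon]$ rather than in $[-1,1]$ as you assert; this is handled by a harmless enlargement of the multiplier's accuracy domain (or a constant-size clipping layer) and does not affect any of the stated bounds.
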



	For $j=0,1$, let $\varphi_j$ be the continuous piece-wise linear functions with break points $\{-2, -1,1,2\}$ and $\supp(\varphi_j) \subset [-2,2]$ such that $\varphi_0(x)=1$ and $\varphi_1(x)=x$ if $x\in [-1,1]$.	

\begin{lemma} \label{lem-product-varphi}
Let $\bell \in \NNd$ and $\varphi$ be either $\varphi_0$ or $\varphi_1$.
	For  every $\delta \in (0,1)$, we can explicitly construct  a deep  ReLU neural network  $\Phi$  on $\RRd$ so that 
	$$
	\sup_{ \bx \in [-2,2]^d} \Bigg|\prod_{j=1}^d\varphi^{\ell_j}(x_j) - \Phi(\bx) \Bigg| \leq \delta. 
	$$
	Furthermore, $\supp(\Phi )\subset [-2,2]^d$ and there exists a constant $C>0$ independent of $\delta$, $d$ and $\bell$ such that
	\begin{equation}\label{eq-varphi}
	W(\Phi) \leq C\big(1+ |\bell|_1\log (|\bell|_1\delta^{-1}) \big)
	\qquad \text{and}\qquad
	L(\Phi)  \leq C\big(1+\log |\bell|_1\log(|\bell|_1\delta^{-1})) \,.
	\end{equation}
\end{lemma}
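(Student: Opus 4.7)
The plan is to concatenate two networks: a coordinatewise preprocessing layer that realizes $\varphi$ exactly on each input, followed by the product network $\Phi_P$ from Lemma \ref{lem:multi}.

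First I would observe that both $\varphi_0$ and $\varphi_1$ are continuous piecewise linear functions with breakpoints in $\{-2,-1,1,2\}$, exactly realizable by ReLU networks of constant size, and that on $[-2,2]$ both map into $[-1,1]$ (in fact into $[0,1]$ for $\varphi_0$). Thus for $\bx \in [-2,2]^d$ the vector $\brac{\varphi(x_1),\dots,\varphi(x_d)}$ lies in $[-1,1]^d$, so Lemma \ref{lem:multi} can be applied to approximate the product on this domain within error $\delta$. I would use Lemma \ref{lem:parallel} to build the preprocessing network $\Psi$ with $\Psi(\bx)=\brac{\varphi(x_1),\dots,\varphi(x_d)}$ in parallel, giving $W(\Psi)\le Cd$ and $L(\Psi)\le C$, then use Lemma \ref{lem-concatenation} to form $\Phi := \Phi_P\circ \Psi$.

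The size estimate follows from Lemma \ref{lem-concatenation}:
\[
W(\Phi)\le 2W(\Psi)+2W(\Phi_P)\le C\brac{d+d\log(d\delta^{-1})}\le C\brac{1+d\log(d\delta^{-1})},
\]
and similarly $L(\Phi)=L(\Psi)+L(\Phi_P)\le C\brac{1+\log d\log(d\delta^{-1})}$, matching \eqref{eq-varphi}. The approximation bound follows by applying Lemma \ref{lem:multi} to the preprocessed input $\Psi(\bx)\in[-1,1]^d$, noting that $\Psi$ is exact so the only error incurred is the $\delta$ from $\Phi_P$.

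The one point that requires a little care is the support claim $\supp(\Phi)\subset [-2,2]^d$. For $\bx\notin[-2,2]^d$ there is some coordinate $j$ with $|x_j|>2$, and both $\varphi_0$ and $\varphi_1$ vanish outside $[-2,2]$, so $\varphi(x_j)=0$. By the second assertion of Lemma \ref{lem:multi} ($\Phi_P(\bz)=0$ whenever some $z_j=0$), we get $\Phi(\bx)=\Phi_P(\Psi(\bx))=0$, establishing the support bound. I expect no real obstacle here — the argument is essentially a bookkeeping exercise combining the three preceding lemmas, and the crucial feature being exploited is precisely the vanishing-on-a-zero-coordinate property of $\Phi_P$, which propagates the compact support of $\varphi$ through to the product network.
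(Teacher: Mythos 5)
Your construction --- realize $\varphi$ exactly coordinatewise by a constant-size ReLU network, feed the resulting vector in $[-1,1]^d$ into the product network $\Phi_P$ of Lemma~\ref{lem:multi}, and use the vanishing-on-a-zero-coordinate property of $\Phi_P$ together with $\supp(\varphi)\subset[-2,2]$ to get the support claim --- is exactly the argument the paper intends, which it only sketches by saying the lemma is ``directly derived'' from the realization of $\varphi_0,\varphi_1$ and Lemma~\ref{lem:multi}. The proposal is correct (the only cosmetic omission is the trivial $d=1$ case, where Lemma~\ref{lem:multi} formally requires $d\ge 2$ but the product is just $\varphi$ itself).
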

\begin{proof}
Notice that  the explicit forms  of $\varphi_j$ via the ReLU activation function  are
	$$
	\varphi_0(x)= \sigma(x-2) - 3\sigma(x-1) + 4\sigma(x) - 3\sigma(x+1) + \sigma(x+2), 
	$$
	and 
	$$
	\varphi_1(x)= \sigma(x-2) - 2\sigma(x-1) +  2\sigma(x+1) -\sigma(x+2).
	$$
	This yields that $\varphi_j $ can be realized exactly by a shallow  ReLU neural network (still denoted by  $\varphi_j$) with size $W(\varphi_0)\leq 10$ and $W(\varphi_1)\leq 8$. 
	The network $\Phi$ can be constructed as a concatenation of deep ReLU neural networks $\{\varphi(x_j)\}_{j=1}^d$ and $\Phi_P$. By the definitions of deep ReLU neural network and function $\varphi$  we have  $$
	\bz^1 = \{\varphi(x_j)\}_{j=1}^d \subset [-1,1]^d.
	$$
	 Hence, the estimates \eqref{eq-varphi} follow directly from Lemmata  \ref{lem-concatenation} and \ref{lem:multi}. 
	\hfill
	\end{proof}

\section{Deep ReLU neural network approximation in Bochner spaces}
\label{Deep ReLU neural network approximation in Bochner spaces}

In this section, we investigate  collocation methods of deep ReLU neural network approximation of functions in Bochner spaces 
 related to a Hilbert space $X$  and tensor product standard Gaussian probability measures $\gamma$.  Functions to be approximated have the weighted $\ell_2$-summable Hermite gpc expansion coefficients (see Assumption (I) below).
The approximation is based  on the  sparse-grid Lagrange gpc  interpolation. 
We  construct  such methods and prove convergence rates of the approximation by them. 
The results obtained in this section will be applied to deep ReLU neural network collocation approximation of the solution of parametrized elliptic PDEs with lognormal inputs in the next section.

\subsection{Tensor product Gaussian measures and Bochner spaces}
\label{Tensor product Gaussian measures and Bochner spaces}

Let $\gamma(y)$ be the standard Gaussian probability measure on $\RR$ 
with the  density 
\begin{equation} \label{g}
 g(y):=\frac 1 {\sqrt{2\pi}} e^{-y^2/2}, \ {\rm i.e.,} \quad  \rd\gamma(y):=g(y)\,\rd y .
\end{equation}
For $M \in \NN$, the standard Gaussian probability measures $\gamma(\by)$ on $\RRM$ can be defined by
\begin{equation} \nonumber
\rd \gamma(\by) 
:= \ 
g_M(\by) \rd (\by)
\ = \
\bigotimes_{j=1}^M g(y_j) \rd (y_j), \quad
\by = (y_j)_{j=1}^M \in \RRM,
\end{equation}
where $g_M(\by) := \ \bigotimes_{j=1}^M g(y_j)$.

We next  recall a concept of
standard Gaussian probability measure $\gamma(\by)$ on $\RRi$ as 
the infinite tensor product of the standard Gaussian probability 
measures $\gamma(y_i)$:
\begin{equation} \nonumber
\gamma(\by) 
:= \ 
\bigotimes_{j \in \NN} \gamma(y_j) , \quad \by = (y_j)_{j \in \NN} \in \RRi.
\end{equation}
The sigma algebra for $\gamma(\by)$ is generated by the set of cylinders $A:= \prod_{j \in \NN} A_j$, where $A_j \subset \RR$ are univariate $\gamma$-measurable sets and only a finite number of $A_i$ are different from $\RR$. For such a set $A$, we have $\gamma(A) = \prod_{j \in \NN} \gamma(A_j)$.
(For details on infinite tensor product of probability measures, see, e.g., \cite[pp. 429--435]{HS65}.)

In what follows,  we use   the joint notation:  $J$ denotes either $\infty$ or $M \in \NN$ and
\begin{equation} \label{set:U}
	U
	:=
	\begin{cases}
		\RRM\ \ &{\rm if} \ \ J=M, \\
		\RRi \ \ &{\rm if} \ \ J= \infty;
	\end{cases}
\end{equation}
If $X$ is a separable Hilbert space, 
the standard Gaussian probability measure $\gamma$ on $U$ induces  the Bochner space $L_2(U,X,\gamma)$ of  $\gamma$-measurable mappings $v$ from $U$ to $X$, equipped with the norm
\begin{equation} \label{L_2(U,X,gamma)}
\|v\|_{L_2(U,X,\gamma)}
:= \
\left(\int_{U} \|v(\cdot,\by)\|_X^2 \, \rd \gamma(\by) \right)^{1/2}.
\end{equation}
For a $\gamma$-measurable subset $\Omega$ in $U$  the spaces  $L_2(\Omega,X,\gamma)$ and $L_2(\Omega,\gamma)$ is defined in the usual way. 

In the case $U=\RRM$, we introduce  also the space $L_{\infty}^{\sqrt{g}}(\RRM,X)$ as the set of all strongly $\gamma$-measurable functions  $v: \RRM \to X$ for which the $\sqrt{g_M}$-weighted uniform norm
\begin{equation} \label{L_{infty}^{sqrt{g}}}
\displaystyle
\|v\|_{L_{\infty}^{\sqrt{g}}(\RRM,X)}
:= \
\displaystyle
\operatornamewithlimits{ess \, sup}_{\by \in \RRM}  \left(\|v(\by)\|_X \sqrt{g_M(\by)}\right) \ < \ \infty.
\end{equation}
One may expect an infinite-dimensional version of  this space. Unfortunately, we could not give a correct definition of  space $L_{\infty}^{\sqrt{g}}( \RRi,X)$ because there is no an infinite-dimensional counterpart of the weight $g_M$. However, under certain assumptions (see Assumption (I) in Subsection \ref{ Hermite gpc interpolation approximation}), we  can obtain some approximation results which do not depend on $M$, in particular, when $M$ are very large.
 We make use of the abbreviations: 
$L_{\infty}^{\sqrt{g}}(\RRM)= L_{\infty}^{\sqrt{g}}(\RRM,\RR)$ and 
$L_{\infty}^{\sqrt{g}}(\RR)= L_{\infty}^{\sqrt{g}}(\RR,\RR)$.

In this section, we will investigate  the  problem of deep ReLU neural network approximation of functions in $L_2(\RRi, X, \gamma)$ or $L_2(\RRM, X, \gamma)$ with the  error measured in the norms of the  space $L_2(\RRi, X, \gamma)$  or of the space $L_\infty^{\sqrt{g}}(\RRM,X)$, respectively. (Notice that these norms  are the most important in evaluation of the error of collocation approximation of solutions of  parametric and stochastic PDEs). 
It is convenient to us to incorporate these different approximation  problems  into unified consideration. 
Hence, in what follows, we use   the joint notations: 
\begin{equation} \label{space:Ll(U,X)}
\Ll(U,X)
:=
\begin{cases}
L_{\infty}^{\sqrt{g}}( \RRM,X) \ \ &{\rm if} \ \ U=\RRM, \\
L_2(\RRi,X,\gamma) \ \ &{\rm if} \ \ U=\RRi;
\end{cases}
\end{equation}
\begin{equation} \label{set:Ff}
\Ff
:=
\begin{cases}
\NMO \ \ &{\rm if} \ \ U=\RRM, \\
\FF \ \ &{\rm if} \ \ U=\RRi;
\end{cases}
\end{equation}
and 
\begin{equation} \label{set:Nn}
\Nn
:=
\begin{cases}
\{1,...,M\} \ \ &{\rm if} \ \ U=\RRM, \\
\NN \ \ &{\rm if} \ \ U=\RRi.
\end{cases}
\end{equation}
Here $\FF$ is the set of all sequences of non-negative integers $\bs=(s_j)_{j \in \NN}$ such that their support 
$\supp (\bs):= \{j \in \NN: s_j >0\}$ is a finite set.

 Let $(H_k)_{k \in \NN_0}$ be the  Hermite polynomials normalized according to
$\int_{\RR} | H_k(y)|^2\, g(y)\, \rd y= 1.$ Then a function $v \in L_2(U,X,\gamma)$  can be represented  by the Hermite gpc expansion
\begin{equation} \label{series}
v(\by)=\sum_{\bs\in\Ff} v_\bs \,H_\bs(\by), \quad v_\bs \in X,
\end{equation}
with
\begin{equation}
H_\bs(\by)=\bigotimes_{j \in \Nn}H_{s_j}(y_j),\quad v_\bs:=\int_U v(\by)\,H_\bs(\by)\, \rd\gamma (\by), \quad \bs \in \Ff.
\label{hermite}
\end{equation}
Notice that $(H_\bs)_{\bs \in \Ff}$ is an orthonormal basis of $L_2(U,\gamma):= L_2(U,\RR, \gamma)$. 
 Moreover, for every $v \in L_2(U,X,\gamma)$  represented by the 
series \eqref{series},  Parseval's identity holds
\begin{equation} \nonumber
\|v\|_{L_2(U,X,\gamma)}^2
\ = \ \sum_{\bs\in\Ff} \|v_\bs\|_X^2.
\end{equation}

For $\bs, \bs' \in \Ff$,  the inequality $\bs' \le \bs$ means that $s_j' \le s_j$, $j \in \Nn$.   A set
$\bsigma=(\sigma_\bs)_{\bs \in \Ff}$ with $\sigma_\bs \in \RR$ is called increasing if $\sigma_{\bs'} \le \sigma_\bs$ for $\bs' \le \bs$. 

\medskip
\noindent
{\bf Assumption (I)} \ 	For $v \in L_2(U,X,\gamma)$  represented by 
the series \eqref{series}, there exists an increasing 
set  $\bsigma =(\sigma_\bs)_{\bs \in \Ff}$ of positive 
numbers  such that 
for some $q$ with $0< q < 2$,
\begin{equation} \label{sigma-summability}
\left(\sum_{\bs\in\Ff} (\sigma_\bs \|v_\bs\|_{X})^2\right)^{1/2} \ \le C_1 \ <\infty, \ \ \text{with} \ \
\norm{\bsigma^{-1}}{\ell_q(\Ff)} \le C_2 < \infty,
\end{equation}
where the constants $C_1$ and $C_2$  are independent of $J$.

Here and in what follows, "independent of $J$" means that $C_1$ and $C_2$ (and other constants)
are independent of $M$ when $J=M$, since we are interested in convergence rates and other  asymptotic properties which do not depend on $M$ and which is based on Assumption (I).

\begin{lemma}\label{lemma[AbsConv]}
	For $v \in L_2(U,X,\gamma)$ sastisfying Assumption {\rm (I)}, the 
series \eqref{series} converges absolutely and therefore, unconditionally in 
$\Ll(U,X)$  to $v$ and 
	\begin{equation} \label{summability1}
	\sum_{\bs\in\Ff} \|v_\bs\|_{X} 
	\ \le  \ C \ <\infty,
\end{equation}
where the constant $C$ is independent of $J$.
\end{lemma}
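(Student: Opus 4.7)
The plan is first to extract the $\ell_1$-summability bound \eqref{summability1} from Assumption (I) by a single Cauchy--Schwarz estimate against the weight sequence $\bsigma^{-1}$, and then to upgrade this $\ell_1$-summability to absolute (and hence unconditional) convergence of the Hermite series in $\Ll(U,X)$ by multiplying through by a uniform bound on the basis functions $H_\bs$ in the relevant norm. Identification of the limit with $v$ will follow from the $L_2$-definition of the Hermite expansion together with a standard almost-everywhere subsequence argument.

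For \eqref{summability1}, I would write $\|v_\bs\|_X = \sigma_\bs^{-1}(\sigma_\bs\|v_\bs\|_X)$ and apply Cauchy--Schwarz in $\ell_2(\Ff)$ to obtain
\[
\sum_{\bs\in\Ff}\|v_\bs\|_X \;\le\; \|\bsigma^{-1}\|_{\ell_2(\Ff)} \left(\sum_{\bs\in\Ff}\sigma_\bs^2\|v_\bs\|_X^2\right)^{1/2}.
\]
The second factor is controlled by the constant in \eqref{sigma-summability}. For the first factor, the hypotheses $\sigma_\bs>1$ and $0<q<2$ give $\sigma_\bs^{-2} = \sigma_\bs^{-q}\sigma_\bs^{-(2-q)} \le \sigma_\bs^{-q}$ pointwise on $\Ff$, so $\|\bsigma^{-1}\|_{\ell_2(\Ff)} \le \|\bsigma^{-1}\|_{\ell_q(\Ff)}^{q/2}$, which is finite and bounded independently of $J$ by Assumption (I). Combining yields \eqref{summability1} with a constant depending only on the $C$ from Assumption (I) and $q$.

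With \eqref{summability1} in hand, absolute convergence of \eqref{series} in $\Ll(U,X)$ reduces to bounding $H_\bs$ in the relevant norm. When $U=\RRi$ and $\Ll(U,X)=L_2(\RRi,X,\gamma)$, orthonormality of $(H_\bs)_{\bs\in\Ff}$ in $L_2(\RRi,\gamma)$ gives $\|v_\bs H_\bs\|_{L_2(\RRi,X,\gamma)}=\|v_\bs\|_X$, whence $\sum_\bs\|v_\bs H_\bs\|_{L_2(\RRi,X,\gamma)}<\infty$. When $U=\RRM$ and $\Ll(U,X)=L_\infty^{\sqrt{g}}(\RRM,X)$, I would invoke the classical Cram\'er bound $|H_k(y)|\sqrt{g(y)}\le\kappa$ for a universal constant $\kappa$ independent of $k\in\NN_0$, which by tensorization yields $|H_\bs(\by)|\sqrt{g_M(\by)}\le\kappa^M$ uniformly in $\bs$, sufficient for absolute convergence of $\sum_\bs v_\bs H_\bs$ in $L_\infty^{\sqrt{g}}(\RRM,X)$ for each fixed $M$. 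Absolute convergence in the Banach space $\Ll(U,X)$ then implies unconditional convergence, giving the mode asserted in the lemma.

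It remains only to identify the limit with $v$. In the $L_2$ case this is immediate from \eqref{hermite}; in the $L_\infty^{\sqrt{g}}$ case one notes that the partial sums of \eqref{series} also converge to $v$ in $L_2(\RRM,X,\gamma)$, hence along a subsequence $\gamma$-a.e., forcing the $L_\infty^{\sqrt{g}}$-limit to coincide with $v$ almost everywhere and therefore in $L_\infty^{\sqrt{g}}(\RRM,X)$. The only mildly delicate point is the tensorized Cram\'er estimate in the finite-dimensional setting; a crude $M$-dependent prefactor there is harmless, because the uniform-in-$J$ conclusion of the lemma is restricted to \eqref{summability1}, while only qualitative convergence is required for each fixed $J$.
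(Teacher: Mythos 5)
Your proof is correct and follows essentially the same route as the paper: Cauchy--Schwarz against the weight $\bsigma$, the embedding coming from $\sigma_\bs>1$ and $q<2$ (so $\sum_\bs\sigma_\bs^{-2}\le\sum_\bs\sigma_\bs^{-q}$), and then orthonormality of $H_\bs$ in $L_2$ resp.\ Cram\'er's bound in $L_\infty^{\sqrt{g}}$ to upgrade to absolute convergence, identifying the limit via Parseval. The only cosmetic difference is that the paper's form of Cram\'er's bound \eqref{CramerBnd} already gives $\|H_\bs\|_{L_\infty^{\sqrt{g}}(\RRM)}<1$ uniformly in $M$, so the $\kappa^M$ prefactor you introduce (and correctly note is harmless for this lemma) is not actually needed.
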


\begin{proof} By applying the H\"older inequality 
 from Assumption {\rm (I)}  we obtain
	\begin{equation} \nonumber
	\begin{split}	 
	\sum_{\bs\in\Ff} \|v_\bs\|_{X}
	\leq 
	\bigg( \sum_{\bs\in \Ff} (\sigma_\bs \|v_\bs\|_{X})^2\bigg)^{1/2} 
	\bigg(\sum_{\bs\in \Ff} \sigma_\bs^{-2} \bigg)^{1/2} 
	\le \
	C\norm{\bsigma^{-1}}{\ell_q(\Ff)}
	\ < \ \infty.
	\end{split}
	\end{equation}	
	This proves \eqref{summability1}. Hence, by the equality $\norm{H_\bs}{L_2(\RRi)} = 1$, $\bs \in \FF$,  and the inequality $\norm{H_\bs}{L_\infty^{\sqrt{g}}(\RRi)} < 1$, $\bs \in \NN_0^M$ (which follows from \eqref{CramerBnd} in Appendix), the series ~\eqref{series} converges absolutely, and therefore, unconditionally  to $v \in L_2(U,X,\gamma)$ since by the Parseval's identity  it already converges to $v$ in the norm of $L_2(U,X,\gamma)$.
	\hfill
\end{proof}

\subsection{ Sparse-grid Lagrange gpc interpolation}
\label{ Hermite gpc interpolation approximation}

For $m \in \NN_0$, let $Y_m = (y_{m;k})_{k \in \pi_m}$ be the increasing sequence of  the $m+1$ roots of the Hermite polynomial $H_{m+1}$, ordered as
$$
y_{m,-j} < \cdots < y_{m,-1} < y_{m,0} = 0 < y_{m,1} < \cdots < y_{m,j} \quad {\rm if} \  m = 2j,
$$
$$
y_{m,-j} < \cdots < y_{m,-1} < y_{m,1} < \cdots < y_{m,j} \quad {\rm if} \  m = 2j - 1,
$$
where 
$$
\pi_m:= 
\begin{cases}
\{-j,-j+1,..., -1, 0, 1, ...,j-1,j \} \ & \ \text{if} \  m = 2j; \\
\{-j,-j+1,...,-1, 1,...,j-1,j \} \ & \ \text{if} \  m = 2j-1.
\end{cases}
$$
(in particular, $Y_0 = (y_{0;0})$ with $y_{0;0} = 0$).

For  a function$v$  on $\RR$ taking values in a Hilbert space $X$ and $m \in \NN_0$, we define the   Lagrange intepolation operator $I_m$  by
\begin{equation} \label{I_(v)}
I_m(v):= \ \sum_{k\in \pi_m} v(y_{m;k}) L_{m;k}, \quad 
L_{m;k}(y) := \prod_{j \in \pi_m \ j\not=k}\frac{y - y_{m;j}}{y_{n;k} - y_{m;j}},
\end{equation}		
(in particular, $I_0(v) = v(y_{0,0})L_{0,0}(y)= v(0)$ and $L_{0,0}(y)=1$). Notice that $I_m(v)$ is a function on $\RR$ taking values in $X$ and  interpolating $v$ at $y_{m;k}$, i.e., $I_m(v)(y_{m;k}) = v(y_{m;k})$.   Moreover, for a function $v: \, \RR \to \RR$, the function $I_m(v)$ is the Lagrange polynomial having degree 
$\le m$, and that $I_m(\varphi) = \varphi$ for every polynomial $\varphi$ of degree $\le m$.

Let
\begin{equation} \label{lambda_m}
\lambda_m:= \ \sup_{\|v\|_{L_\infty^{\sqrt{g}}(\RR)} \le 1} \|I_m(v)\|_{L_\infty^{\sqrt{g}}(\RR)} 
\end{equation}	
be the Lebesgue constant. 
It was proven in \cite{Mat94a,Mat94b,Sza97} that
\begin{equation} \nonumber
\lambda_m
\ \le \
C(m+1)^{1/6}, \quad m \in \NN,
\end{equation}
for some positive constant $C$ independent of $m$ (with the obvious inequality $\lambda_0(Y_0) \le 1$). Hence, for every $\varepsilon > 0$, there exists a positive constant $C_\varepsilon \ge 1$ independent of $m$ such that
\begin{equation} \label{ineq[lambda_m]2}
\lambda_m
\ \le \
(1 + C_\varepsilon m)^{1/6 + \varepsilon}, \quad \forall m \in \NN_0.
\end{equation}

We define the univariate operator $\Delta_m$ for $m \in \NN_0$ by
\begin{equation} \nonumber
\Delta_m
:= \
I_m - I_{m-1},
\end{equation} 
with the convention $I_{-1} = 0$. 

\begin{lemma} \label{lemma[Delta_{bs}]}
	For every $\varepsilon > 0$, there exists a positive constant $C_\varepsilon$ independent of $m$ such that for every function  $v$ on $\RR$,
	\begin{equation} \label{ineq[Delta]1}
	\|\Delta_m(v)\|_{L_\infty^{\sqrt{g}}(\RR)}	
	\ \le \
	(1 + C_\varepsilon m)^{1/6 + \varepsilon} \|v\|_{L_\infty^{\sqrt{g}}(\RR)}	, \quad \forall m \in \NN_0,
	\end{equation}
	whenever the norm in the right-hand side is finite. 
\end{lemma}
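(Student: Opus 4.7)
The plan is to reduce the bound on $\Delta_m$ directly to the Lebesgue constant estimate \eqref{ineq[lambda_m]2} via the definition $\Delta_m = I_m - I_{m-1}$, and then absorb the resulting numerical factor into the constant $C_\varepsilon$.

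First I would handle the trivial case $m=0$. Since $I_{-1}=0$ by convention, $\Delta_0(v) = I_0(v)$, and the bound $\lambda_0(Y_0)\le 1$ together with the definition of $\lambda_0$ gives $\|\Delta_0(v)\|_{L_\infty^{\sqrt{g}}(\RR)} \le \|v\|_{L_\infty^{\sqrt{g}}(\RR)}$, which matches the claim at $m=0$ for any $C_\varepsilon\ge 0$.

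For $m\ge 1$, I would apply the triangle inequality in $L_\infty^{\sqrt{g}}(\RR)$:
\begin{equation} \nonumber
\|\Delta_m(v)\|_{L_\infty^{\sqrt{g}}(\RR)}
\ \le \
\|I_m(v)\|_{L_\infty^{\sqrt{g}}(\RR)} + \|I_{m-1}(v)\|_{L_\infty^{\sqrt{g}}(\RR)}
\ \le \
\bigl(\lambda_m(Y_m) + \lambda_{m-1}(Y_{m-1})\bigr)\|v\|_{L_\infty^{\sqrt{g}}(\RR)}.
\end{equation}
Next I would fix an auxiliary exponent $\varepsilon' \in (0,\varepsilon)$ and invoke \eqref{ineq[lambda_m]2} with $\varepsilon'$ in place of $\varepsilon$ to obtain a constant $C_{\varepsilon'}$ for which $\lambda_m(Y_m) \le (1+C_{\varepsilon'} m)^{1/6+\varepsilon'}$ and the analogous estimate for $\lambda_{m-1}(Y_{m-1})$; using monotonicity of $t\mapsto (1+C_{\varepsilon'}t)^{1/6+\varepsilon'}$ I would bound the sum by $2(1+C_{\varepsilon'} m)^{1/6+\varepsilon'}$.

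The final step is to absorb the factor $2$ by inflating the constant: I would choose $C_\varepsilon \ge C_{\varepsilon'}$ large enough that $2(1+C_{\varepsilon'} m)^{1/6+\varepsilon'} \le (1+C_\varepsilon m)^{1/6+\varepsilon}$ for every $m\ge 1$. This is possible because the extra factor $(1+C_\varepsilon m)^{\varepsilon-\varepsilon'}$ grows without bound as $m$ increases while the ratio of the $(1/6+\varepsilon')$-powers is bounded; concretely one just needs $(1+C_\varepsilon)^{\varepsilon-\varepsilon'} \ge 2(C_{\varepsilon'}/C_\varepsilon)^{1/6+\varepsilon'}$, which holds once $C_\varepsilon$ is taken sufficiently large. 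Combining with the $m=0$ case yields \eqref{ineq[Delta]1} uniformly in $m\in\NN_0$. I do not expect any real obstacle; the only thing one must be careful about is the bookkeeping in the final absorption step, since the statement demands a bound of the exact form $(1+C_\varepsilon m)^{1/6+\varepsilon}$ rather than a constant times such a power.
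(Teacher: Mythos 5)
Your proposal is correct and follows essentially the same route as the paper: the paper's (very terse) proof likewise applies the triangle inequality to $\Delta_m = I_m - I_{m-1}$, invokes the Lebesgue constant bound to get $\|\Delta_m(v)\|_{L_\infty^{\sqrt{g}}(\RR)} \le 2(1+Cm)^{1/6}\|v\|_{L_\infty^{\sqrt{g}}(\RR)}$, and absorbs the factor $2$ and the constant into $(1+C_\varepsilon m)^{1/6+\varepsilon}$ via the extra $\varepsilon$ in the exponent. Your write-up simply makes explicit the absorption bookkeeping that the paper leaves implicit.
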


\begin{proof} 
	From the assumptions we have that
	\begin{equation} \nonumber
	\|\Delta_m(v)\|_{L_\infty^{\sqrt{g}}(\RR)}	
	\ \le \
	2C(1 + m)^{1/6} \|v\|_{L_\infty^{\sqrt{g}}(\RR)}	, \quad \forall m \in \NN_0,
	\end{equation}
	which implies \eqref{ineq[Delta]1}.  
	\hfill
\end{proof}

We will use a sparse-grid Lagrange gpc interpolation as an intermediate approximation in the deep ReLU neural network approximation of functions $v \in L_2(U,X,\gamma)$. In order to have a correct definition of interpolation operator we have to impose some neccessary restrictions on $v$. Let $\Ee$ be a $\gamma$-measurable subset in $U$ such that $\gamma(\Ee) =1$ and $\Ee$ contains  all $\by \in U$ with $|\by|_0 < \infty$ in the case $U=\RRi$, where $|\by|_0$ denotes the number of nonzero components $y_j$ of $\by$. For a given $\Ee$ and Hilbert space $X$, we define $L_2^\Ee(U,X,\gamma)$ as the subspace in $L_2(U,X,\gamma)$  of all elements $v$ such that the point value $v(\by)$ (of a representative of $v$) is well-defined for all $\by \in \Ee$. In what folllows, $\Ee$ is fixed.

For  $v \in L_2^\Ee(U,X,\gamma)$, we introduce the tensor product operator $\Delta_\bs$, $\bs \in \Ff$, by
\begin{equation} \label{Delta_bs(v)}
\Delta_\bs(v)
:= \
\bigotimes_{j \in \Nn} \Delta_{s_j}(v),
\end{equation}
where the univariate operator
$\Delta_{s_j}$ is successively applied to the univariate function $\bigotimes_{i<j} \Delta_{s_i}(v)$ by considering it as a 
function of  variable $y_j$ with the other variables held fixed.
 From the definition of $L_2^\Ee(U,X,\gamma)$ one can see that the operators  $\Delta_\bs$ are well-defined for all $\bs \in \Ff$. We define for $\bs \in \Ff$,
\begin{equation} \nonumber
I_\bs(v)
:= \
\bigotimes_{j \in \Nn} I_{s_j}(v), \quad
L_{\bs;\bk}
:= \
\bigotimes_{j \in \Nn} L_{s_j;k_j}, \quad
\pi_\bs
:= \
\prod_{j \in \Nn} \pi_{s_j},
\end{equation}
(the function $I_\bs(v)$ is defined in the same manner as $\Delta_\bs(v)$).

For $\bs \in \Ff$ and $\bk \in \pi_\bs$, let $E_\bs$ be the subset in $\Ff$ of all $\be$ such that $e_j$ is either $1$ or $0$ if $s_j > 0$, and $e_j$ is $0$ if $s_j = 0$, and let $\by_{\bs;\bk}:= (y_{s_j;k_j})_{j \in \Nn} \in U$. Put $|\bs|_1 := \sum_{j \in \Nn} s_j$ for  $\bs \in \Ff$. It is easy to check that the interpolation operator $\Delta_\bs$ can be represented in the form
\begin{equation} \label{Delta_bs=}
\Delta_\bs(v)				
\ = \
\sum_{\be \in E_\bs} (-1)^{|\be|_1} I_{\bs - \be} (v)
\ = \
\sum_{\be \in E_\bs} (-1)^{|\be|_1} \sum_{\bk \in \pi_{\bs - \be}} v(\by_{\bs - \be;\bk}) L_{\bs - \be;\bk}.
\end{equation}

For a given finite set $\Lambda \subset \Ff$, we introduce the gpc interpolation operator $I_\Lambda$  by
\begin{equation} \label{I_Lambda}
I_\Lambda
:= \
\sum_{\bs \in \Lambda} \Delta_\bs.
\end{equation}
From \eqref{Delta_bs=} we obtain 
\begin{equation} \label{I_Lambda=}
I_\Lambda(v)				
\ = \
\sum_{\bs \in \Lambda} \sum_{\be \in E_\bs} (-1)^{|\be|_1} \sum_{\bk \in \pi_{\bs - \be}} v(\by_{\bs - \be;\bk}) L_{\bs - \be;\bk}.
\end{equation}
 
A set $\Lambda \subset \Ff$ is called downward closed if the inclusion $\bs \in \Lambda$ yields the inclusion $\bs' \in \Lambda$ for every $\bs' \in \Ff$ such that $\bs' \le \bs$. 

For $\theta, \lambda \ge 0$, we define the set $\bp(\theta, \lambda):= \brac{p_\bs(\theta, \lambda) }_{\bs \in \Ff}$ by 
\begin{equation} \label{[p_s]}
p_\bs(\theta, \lambda) := \prod_{j \in \Nn} (1 + \lambda s_j)^\theta, \quad \bs \in \Ff,
\end{equation}
with abbreviations $p_\bs(\theta):= p_\bs(\theta, 1)$ and $\bp(\theta):= \bp(\theta, 1)$.

Let $0 < q < \infty$ and $\bsigma= (\sigma_\bs)_{\bs \in \Ff}$ be a set of positive numbers. For $\xi >0$,  define the set
	\begin{equation} \label{Lambda(xi)}
\Lambda(\xi):= \{\bs \in \Ff: \, \sigma_\bs^q \le \xi\}.
\end{equation}

By the formula \eqref{I_Lambda=} we  can  represent the operator $I_{\Lambda(\xi)}$ in the form
\begin{equation} \label{I_Lambda(xi)=}
I_{\Lambda(\xi)}(v)				
\ = \
\sum_{(\bs,\be,\bk) \in G(\xi)}  (-1)^{|\be|_1} v(\by_{\bs - \be;\bk})L_{\bs - \be;\bk},
\end{equation}
where	
\begin{equation} \label{G(xi):=}
G(\xi)				
:= \
\{(\bs,\be,\bk) \in \Ff \times \Ff \times \Ff: \, \bs \in \Lambda(\xi), \ \be \in E_\bs, \ \bk \in \pi_{\bs - \be} \}.
\end{equation}

The following theorem gives an estimate for the error of the approximation of $v \in \Ll_2^\Ee(U,X,\gamma)$ by the sparse-grid Lagrange gpc interpolation $I_{\Lambda(\xi)} v$ on the sampling points in the set $G(\xi)$, which will be used in the deep ReLU neural approximation in the next section.

\begin{theorem}\label{lemma:coll-approx}
		Let $v \in \Ll_2^\Ee(U,X,\gamma)$ satisfy Assumption {\rm (I)}
	and let $\varepsilon >0$  be a fixed number. 
	Assume that $\norm{\bp(\theta/q,\lambda)\bsigma^{-1}}{\ell_q(\Ff)} \le C < \infty$, where 
	$\theta =7/3 + 2\varepsilon$, $\lambda:= C_\varepsilon$ is the constant in Lemma \ref{lemma[Delta_{bs}]}, and the constant $C$ is independent of $J$.
	Then for each $\xi > 1$, we have that 
	\begin{equation} \label{u-I_Lambdau, p le infty}
	\|v -I_{\Lambda(\xi)}v\|_{\Ll(U,X)} \leq C\xi^{-(1/q - 1/2)},
	\end{equation}
	where the  constant $C$ in \eqref{u-I_Lambdau, p le infty} is independent of $J$, $v$ and $\xi$.
\end{theorem}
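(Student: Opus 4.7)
The plan is to combine the Hermite expansion of $v$ with the downward-closed structure of $\Lambda(\xi)$ and then apply a Cauchy--Schwarz estimate that exploits Assumption~(I).

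\emph{Step 1 (reduction to a Hermite coefficient sum).} I first note that $\Lambda(\xi)$ is downward closed, since $\bsigma$ is increasing. Using the telescoping identity $\sum_{\bt \le \bs} \Delta_\bt = I_\bs$ together with the fact that $\Delta_\bt H_\bs = 0$ whenever $\bt \not\le \bs$ (because the univariate $\Delta_m$ annihilates polynomials of degree $<m$), I conclude $I_{\Lambda(\xi)} H_\bs = H_\bs$ for $\bs \in \Lambda(\xi)$. Applying $I_{\Lambda(\xi)}$ termwise to the absolutely convergent Hermite series of $v$ from Lemma~\ref{lemma[AbsConv]} then yields
\begin{equation*}
v - I_{\Lambda(\xi)} v \;=\; \sum_{\bs \notin \Lambda(\xi)} v_\bs \bigl(H_\bs - I_{\Lambda(\xi)} H_\bs\bigr),
\end{equation*}
and the identity $H_\bs = \sum_{\bt \le \bs} \Delta_\bt H_\bs$ further gives $H_\bs - I_{\Lambda(\xi)} H_\bs = \sum_{\bt \le \bs,\,\bt \notin \Lambda(\xi)} \Delta_\bt H_\bs$.

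\emph{Step 2 (mode-by-mode bound).} Tensorizing Lemma~\ref{lemma[Delta_{bs}]} across $\supp(\bt)$ and using the Cram\'er-type bound $\|H_\bs\|_{L_\infty^{\sqrt g}} \le 1$ yields $\|\Delta_\bt H_\bs\|_{\Ll(U)} \le p_\bt(1/6+\varepsilon,C_\varepsilon)$. For $U=\RRi$ one needs in addition a comparison between $L_\infty^{\sqrt g}$ and $L_2(\gamma)$ norms for polynomials of bounded degree, which introduces only a mild degree-dependent factor absorbable into the free parameter $\varepsilon$. Summing the one-dimensional estimate $\sum_{t=0}^{s}(1+C_\varepsilon t)^{1/6+\varepsilon} \le C(1+C_\varepsilon s)^{7/6+\varepsilon}$ coordinatewise then gives
\begin{equation*}
\|H_\bs - I_{\Lambda(\xi)} H_\bs\|_{\Ll(U)} \;\le\; C\, p_\bs(7/6+\varepsilon, C_\varepsilon).
\end{equation*}

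\emph{Step 3 (Cauchy--Schwarz and extraction of the $\xi$-rate).} Applying the triangle inequality and Cauchy--Schwarz with weights $\sigma_\bs$ and $\sigma_\bs^{-1}$, and using the crucial identity $p_\bs(7/6+\varepsilon,C_\varepsilon)^2 = p_\bs(7/3+2\varepsilon,C_\varepsilon) = p_\bs(\theta,C_\varepsilon)$,
\begin{equation*}
\|v - I_{\Lambda(\xi)} v\|_{\Ll(U,X)}^{2} \;\le\; C \Bigl(\sum_\bs (\sigma_\bs \|v_\bs\|_X)^2\Bigr) \Bigl(\sum_{\bs \notin \Lambda(\xi)} p_\bs(\theta, C_\varepsilon)\, \sigma_\bs^{-2}\Bigr).
\end{equation*}
The first factor is $\le C$ by Assumption~(I). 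For the second, since $0<q<2$ and $\sigma_\bs^q > \xi$ on $\{\bs \notin \Lambda(\xi)\}$, one has $\sigma_\bs^{q-2} \le \xi^{1-2/q}$, hence
\begin{equation*}
\sum_{\bs \notin \Lambda(\xi)} p_\bs(\theta, C_\varepsilon)\, \sigma_\bs^{-2} \;\le\; \xi^{1-2/q} \sum_\bs p_\bs(\theta, C_\varepsilon)\, \sigma_\bs^{-q} \;\le\; C\, \xi^{1-2/q},
\end{equation*}
the final bound following from the hypothesis $\|\bp(\theta, C_\varepsilon) \bsigma^{-1}\|_{\ell_q(\Ff)} \le C$ combined with $p_\bs \ge 1$ (directly for $q \ge 1$; for $q < 1$ the embedding $\ell_q \hookrightarrow \ell_r$ with $r \ge 1$ together with a slightly refined splitting yields the same conclusion). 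Taking square roots gives the claimed rate $\|v - I_{\Lambda(\xi)} v\|_{\Ll(U,X)} \le C\, \xi^{-(1/q - 1/2)}$, with $C$ independent of $J$.

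The main obstacle is the mode-by-mode estimate of Step~2 in the infinite-dimensional case: Lemma~\ref{lemma[Delta_{bs}]} gives the operator norm of $\Delta_m$ in $L_\infty^{\sqrt g}(\RR)$, whereas the target norm when $U=\RRi$ is $L_2(\RRi,\gamma)$. The exponent $\theta = 7/3 + 2\varepsilon$ is precisely $(7/6+\varepsilon)\cdot 2$ emerging from the Cauchy--Schwarz step with no slack, so any polynomial-in-degree loss arising from the $L_\infty^{\sqrt g}\to L_2(\gamma)$ passage must be absorbed by the freedom in choosing $\varepsilon$, not by enlarging the leading exponent.
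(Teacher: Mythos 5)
Your decomposition and Cauchy--Schwarz strategy is essentially the paper's: the paper writes $I_{\Lambda(\xi)}v = S_{\Lambda(\xi)}v + \sum_{\bs\notin\Lambda(\xi)}v_\bs\, I_{\Lambda(\xi)\cap R_\bs}H_\bs$ and bounds the two resulting terms separately, which is just a regrouping of your identity $v-I_{\Lambda(\xi)}v=\sum_{\bs\notin\Lambda(\xi)}v_\bs\bigl(H_\bs-I_{\Lambda(\xi)}H_\bs\bigr)$; the mode-by-mode bound $\sum_{\bt\le\bs}p_\bt(1/6+\varepsilon,C_\varepsilon)\le p_\bs(7/6+\varepsilon,C_\varepsilon)$ and the final weighted Cauchy--Schwarz are identical in substance. (Two small remarks on that part: when you tensorize the one-dimensional sum you must take the constant in $\sum_{t=0}^{s}(1+C_\varepsilon t)^{1/6+\varepsilon}\le(1+C_\varepsilon s)^{7/6+\varepsilon}$ to be exactly $1$ --- which works because $C_\varepsilon\ge1$, so $(s+1)\le(1+C_\varepsilon s)$ --- since any constant $C>1$ per active coordinate would produce an uncontrolled factor $C^{|\supp(\bs)|}$; and the passage from $\norm{\bp(\theta,\lambda)\bsigma^{-1}}{\ell_q}<\infty$ to $\sum_\bs p_\bs(\theta,\lambda)\sigma_\bs^{-q}<\infty$ for $q<1$ is delicate, but the paper glosses over exactly the same point, so I do not count it against you.)

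The genuine gap is the one you yourself flag as ``the main obstacle'': the case $U=\RRi$, where the target norm is $L_2(\RRi,X,\gamma)$. The paper does not reprove this case at all --- it cites \cite[Corollary 3.1]{Dung21} and only writes out the $L_\infty^{\sqrt g}(\RRM,X)$ case, precisely because Lemma \ref{lemma[Delta_{bs}]} is an $L_\infty^{\sqrt g}$ statement. Your proposed bridge, ``a comparison between $L_\infty^{\sqrt g}$ and $L_2(\gamma)$ norms for polynomials of bounded degree, which introduces only a mild degree-dependent factor absorbable into the free parameter $\varepsilon$,'' does not close. The sharp comparison for a univariate polynomial $p$ of degree $\le n$ is $\norm{p}{L_2(\RR,\gamma)}\le C\,n^{1/4}\norm{p}{L_\infty^{\sqrt g}(\RR)}$ (split $\int p^2g$ at $|y|=2\sqrt{n}$ and use the Gaussian tail concentration of Lemma \ref{l:Rm trun1}; the order $n^{1/4}$ is attained by polynomials approximating $g^{-1/2}$ on $[-\sqrt n,\sqrt n]$). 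This is a polynomial loss with a \emph{fixed} exponent $1/4$ per coordinate; after the summation over $\bt\le\bs$ and the squaring in Cauchy--Schwarz it adds $1/2$ to the exponent $\theta$, i.e.\ it would require the hypothesis $\norm{\bp(\theta',\lambda)\bsigma^{-1}}{\ell_q}<\infty$ with $\theta'\ge 7/3+1/2+2\varepsilon$. Since $\varepsilon$ is fixed by the statement and the hypothesis is calibrated exactly at $\theta=7/3+2\varepsilon$ with no slack (as you correctly observe), this loss cannot be absorbed. A correct proof of the $\RRi$ case needs a genuinely $L_2(\gamma)$-adapted estimate for $\Delta_{s'}(H_s)$ (this is what \cite{Dung21} supplies), not a norm comparison appended to the $L_\infty^{\sqrt g}$ Lebesgue-constant bound.
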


A proof of this theorem is given in Appendix \ref{Proof of Theorem ref{lemma:coll-approx}}.
\begin{corollary}\label{corollary:coll-approx}
	Under the assumptions of Theorem \ref{lemma:coll-approx},  for each $n > 1$, we can construct a sequence of points  $Y_{\Lambda(\xi_n)}:= (\by_{\bs - \be;\bk})_{(\bs,\be,\bk) \in G(\xi_n)}$ so that
	$|G(\xi_n)| \le n$ and
	\begin{equation} \label{u-I_Lambda}
	\|v -I_{\Lambda(\xi_n)}v\|_{\Ll(U,X)} \leq Cn^{-(1/q - 1/2)},
	\end{equation}
	where the constant $C$ in \eqref{u-I_Lambda} is independent of $J$, $v$ and $n$.
\end{corollary}

\begin{proof}
Notice that this corollary was proven in \cite[Corollary 3.1]{Dung21} for the case $U=\RRi$. By Lemma~\ref{lemma:G(xi)<} in Appendix $|G(\xi)| \le C_q \xi$ for every $\xi > 1$. Hence, the corollary follows from Theorem~\ref{lemma:coll-approx}	by sellection of $\xi_n$ as the maximal number satisfying $|G(\xi_n)| \le n$. 
\hfill
\end{proof}

\subsection{Approximation by deep ReLU neural networks}

In this section, we construct deep ReLU neural networks for collocation approximation of functions $v \in L_2(U,X,\gamma)$. We primarily approximate 
{$v$} 
by the sparse-grid Lagrange gpc interpolation $I_{\Lambda(\xi)} v$. 
Under the assumptions of Lemma~\ref{lemma: |s|_1, |s|_0}(iii) in Appendix,  $I_{\Lambda(\xi)} v $  can be 
seen as a function on $\RRm$, where  
$
m := \min\left\{M,\lfloor K_q \xi \rfloor\right\}.
$
In the next step, we approximate 
$I_{\Lambda(\xi)} v $ by its truncation 
$I_{\Lambda(\xi)}^{\omega}v$ on a sufficiently large 
super-cube
\begin{align}
B^m_\omega
&
:=
[-2\sqrt{\omega}, 2\sqrt{\omega}]^m \subset \RR^m,
\label{B^m_omega}
\end{align}
where the parameter  $\omega$ depending on $\xi$ is chosen in an appropriate way. Finally, the function $I_{\Lambda(\xi)}^{\omega}v$ and therefore, $v$ is approximated by a function $\Phi_{\Lambda(\xi)}v $ on $\RRm$ which is constructed from a deep ReLU neural network. Let us discribe this construction.

For convenience, we consider $\RR^m$ as  the subset of all $\by \in U$ such that $y_j = 0$ for $j > m$.  If $f$ is a function on $\RRm$ taking values in a Hilbert space $X$, then $f$ has an extension to $\RR^{m'}$ with $m' > m$  and the whole $U$ which is denoted again by $f$, by the formula 
$f(\by)= f \brac{(y_j)_{j=0}^m}$ for $\by = (y_j)_{j =1}^{m'}$ and $\by = (y_j)_{j \in \Nn}$, respectively. 

Suppose that   deep ReLU   neural networks $\phi_{\bs - \be;\bk}$ on  $\RR^{|\supp(\bs)|}$ are already constructed for approximation of the polynomials $L_{\bs - \be;\bk}$, $(\bs,\be,\bk) \in G(\xi)$. Then the network $\bphi_{\Lambda(\xi)}:= (\phi_\bs)_{(\bs,\be,\bk) \in G(\xi)}$  on  $\RR^m$ with $|G(\xi)|$ outputs  which is constructed by parallelization,  is used to construct an approximation of
$I_{\Lambda(\xi)}^{\omega}v$ and hence of $v$.  Namely, we  approximate $v$ by 
\begin{equation} \label{Phi_v}
\Phi_{\Lambda(\xi)}v (\by) := \sum_{(\bs,\be,\bk) \in G(\xi)}  (-1)^{|\be|_1}  v(\by_{\bs - \be;\bk})\phi_{\bs - \be;\bk} (\by).
\end{equation}
For the set $\Lambda(\xi)$, we introduce  the following numbers:	
\begin{equation} \label{m_p(xi)}
m_1(\xi)
:= \ 
\max_{\bs \in \Lambda(\xi)} |\bs|_1,
\end{equation}		
and
\begin{equation} \label{m(xi)}
m(\xi)
:= \ 
\max\big\{j \in \Nn:  \exists \bs \in \Lambda(\xi)\ {\rm such \ that} \ s_j > 0 \big\}.
\end{equation}

In this section, we will prove our main results on deep ReLU neural network approximation of functions 
$v \in L_2^\Ee(U,X,\gamma)$ with the error measured in the norm of  the space $L_2(\RRi,X,\gamma)$ or of the space $L_\infty^{\sqrt{g}}(\RRM,X)$, which are incorporated into the following joint theorem.

Denote by $\be^i = (e^i_j)_{j \in \Nn}\in \Ff$ the element with $e^i_i = 1$ and $e^i_j = 0$ for $j \not= i$.
\begin{theorem}\label{thm1-dnn}
	Let   $v \in L_2^\Ee(U,X,\gamma)$ satisfy Assumption {\rm (I)}. Let $\theta$ be any number such that	$\theta \ge 3/q$.
Assume that 
the set $\bsigma= (\sigma_{\bs})_{\bs \in \Ff}$ in Assumption {\rm (I)} 
satisfies  $\sigma_{\be^{i'}} \le \sigma_{\be^i}$ if $i' 
< i$, 
	and that $\norm{\bp(\theta)\bsigma^{-1}}{\ell_q(\Ff)} \le C < \infty$, where the constant $C$ is independent of $J$. Let $K_q$, $K_{q,\theta}$ and $C_q$ be the constants in the assumptions of   Lemma \ref{lemma: |s|_1, |s|_0} and of Lemma \ref{lemma:G(xi)<} in Appendix.
Then for every $\xi > 2$, we can  construct  a deep 
ReLU neural network $\bphi_{\Lambda(\xi)}:= (\phi_{\bs - \be;\bk})_{(\bs,\be,\bk) \in G(\xi)}$ on 
$\RR^m$ with 
\begin{equation} \nonumber
m
:=
\begin{cases}
\min\left\{M,\lfloor K_q \xi \rfloor\right\}  \ \ &{\rm if} \ \ U=\RRM, \\
\lfloor K_q \xi \rfloor \ \ &{\rm if} \ \ U=\RRi,
\end{cases}
\end{equation}
and a sequence of points  $Y_{\Lambda(\xi)}:= (\by_{\bs - \be;\bk})_{(\bs,\be,\bk) \in G(\xi)}$ having the following properties. 
\begin{itemize}
	\item [{\rm (i)}]
The deep ReLU neural network $\bphi_{\Lambda(\xi)}$ and sequence of points  $Y_{\Lambda(\xi)}$ are independent of $v$;
	
\item [{\rm (ii)}] 
The  output dimension of $\bphi_{\Lambda(\xi)}$ are at most $\lfloor C_q \xi \rfloor$;
	
\item [{\rm (iii)}] 
$W\big(\bphi_{\Lambda(\xi)}\big) \le C \xi^{1 + 2/\theta q} \log \xi$;

\item [{\rm (iv)}] 
$L\big(\bphi_{\Lambda(\xi)}\big) \le C \xi^{1/\theta q} (\log \xi)^2$;

\item [{\rm (v)}] 
The  components $\phi_{\bs - \be;\bk}$, $(\bs,\be,\bk) \in G(\xi)$, of $\bphi_{\Lambda(\xi)}$ are deep ReLU neural networks on 
$\RR^{|\supp(\bs)|}$ with ${|\supp(\bs)|}  \le K_{q,\theta} \xi^{\frac{1}{\theta q}}$, having support contained in the super-cube $[-T,T]^{|\supp(\bs)|}$, 
where  $T:= 4\sqrt{\lfloor K_{q,\theta} \xi \rfloor}$;

\item [{\rm (vi)}] The approximation of $v$ by  $\Phi_{\Lambda(\xi)}v$  gives the error estimate
\begin{align} \label{approximation-error}
\| v- \Phi_{\Lambda(\xi)}v  \|_{\Ll(U,X)}\leq C\xi^{-(1/q - 1/2)}.
\end{align} 
\end{itemize}
Here the constants $C$  are independent of $J$, $v$ and $\xi$.
\end{theorem}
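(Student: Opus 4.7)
The plan is to decompose the argument into three stages, each contributing a controlled error. In the first stage I invoke Theorem~\ref{lemma:coll-approx} to reduce the problem to approximating the sparse--grid Lagrange gpc interpolant $I_{\Lambda(\xi)}v$ rather than $v$ itself: since $\|v-I_{\Lambda(\xi)}v\|_{\Ll(U,X)}\le C\xi^{-(1/q-1/2)}$, it suffices to build a compactly supported ReLU realization $\Phi_{\Lambda(\xi)}v$ of the form \eqref{Phi_v} whose deviation from $I_{\Lambda(\xi)}v$ is of the same order. The sampling points $Y_{\Lambda(\xi)}=(\by_{\bs-\be;\bk})_{(\bs,\be,\bk)\in G(\xi)}$ provided by Theorem~\ref{lemma:coll-approx} are kept unchanged, which secures property (i); and the output dimension claim (ii) is immediate from $|G(\xi)|\le C_q\xi$ (cf.\ Lemma~\ref{lemma:G(xi)<}).

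In the second stage, for each triple $(\bs,\be,\bk)\in G(\xi)$ I construct $\phi_{\bs-\be;\bk}$ approximating the tensor product Lagrange polynomial $L_{\bs-\be;\bk}=\bigotimes_{j\in\supp(\bs-\be)}L_{s_j-e_j;k_j}$ on the hyper-cube $B^m_\omega$ with $\omega:=\lfloor K_{q,\theta}\xi\rfloor$, and extended by zero outside. For each univariate factor $L_{s_j-e_j;k_j}$, which is a polynomial of degree at most $m_1(\xi)$, I first realize its monomial expansion via an iterated application of the product network of Lemma~\ref{lem:multi}; then I combine the factor networks into a single product network for $L_{\bs-\be;\bk}$ using Lemma~\ref{lem:multi} again, and multiply entrywise by the compactly supported profile $\prod_j\varphi_0(y_j/T)$ built from Lemma~\ref{lem-product-varphi} to enforce $\supp(\phi_{\bs-\be;\bk})\subset[-T,T]^m$ (property (v)). The cutoff is harmless because $L_{\bs-\be;\bk}$ itself is already exactly reproduced by the product with $\varphi_0$ on the relevant cube $B^m_\omega$, and outside $B^m_\omega$ the Gaussian weight $\sqrt{g_M}$ (respectively the $L_2(\RRi,\gamma)$-mass) controls the tail uniformly for all $\bs\in\Lambda(\xi)$ once $\omega\sim m_1(\xi)$ is chosen sufficiently large.

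In the third stage I parallelize the networks $\phi_{\bs-\be;\bk}$ using Lemma~\ref{lem:parallel} to obtain $\bphi_{\Lambda(\xi)}$ and account for the aggregate size and depth. If the per-term product is built to tolerance $\delta\sim \xi^{-(1/q+1/2)}$, then the error contribution of $\Phi_{\Lambda(\xi)}v - I_{\Lambda(\xi)}^\omega v$ is at most $\sum_{(\bs,\be,\bk)\in G(\xi)}|v(\by_{\bs-\be;\bk})|_X\,\delta=O(\xi\cdot\delta)=O(\xi^{-(1/q-1/2)})$, matching (vi) once combined with Theorem~\ref{lemma:coll-approx} and the truncation error. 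Using Lemma~\ref{lem:multi} the product of $|\bs-\be|_1\le m_1(\xi)$ factors to accuracy $\delta$ costs $O(m_1(\xi)\log(m_1(\xi)/\delta))$ weights and depth $O(\log m_1(\xi)\cdot\log(m_1(\xi)/\delta))$; with the bound $m_1(\xi)\le C\xi^{1/(\theta q)}$ from Lemma~\ref{lemma: |s|_1, |s|_0} and $|G(\xi)|=O(\xi)$, parallelization then yields the announced total size $W(\bphi_{\Lambda(\xi)})\le C\xi^{1+2/(\theta q)}\log\xi$ in (iii) and depth $L(\bphi_{\Lambda(\xi)})\le C\xi^{1/(\theta q)}(\log\xi)^2$ in (iv).

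The main obstacle will be the simultaneous calibration of the three parameters $\omega$, $\delta$, and the Chebyshev-type side-length $T$: the truncation tail must be absorbed into the target error without inflating the factor $\omega$ in the depth, while the per-term accuracy $\delta$ must dominate the linear blow-up from summation over $|G(\xi)|=O(\xi)$ terms yet remain coarse enough for the logarithmic factors in Lemma~\ref{lem:multi} not to spoil the exponent $1/(\theta q)$ in (iii)--(iv). A secondary subtlety, which the assumption $\sigma_{\be^{i'}}\le\sigma_{\be^i}$ for $i'<i$ is designed to handle, is to ensure that $m(\xi)\le m$ so that $I_{\Lambda(\xi)}v$ genuinely depends only on the first $m$ coordinates; this is exactly what permits extending $\phi_{\bs-\be;\bk}$ from $\RR^m$ to $U$ trivially and validates the $L_2(\RRi,X,\gamma)$ estimate by Parseval in the infinite-dimensional case.
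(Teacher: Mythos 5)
Your overall architecture (interpolate by $I_{\Lambda(\xi)}$, truncate to $B^m_\omega$, replace each $L_{\bs-\be;\bk}$ by a compactly supported ReLU surrogate, parallelize, and control the four resulting error terms) is exactly the paper's, and your identification of the role of the monotonicity hypothesis $\sigma_{\be^{i'}}\le\sigma_{\be^i}$ (forcing $m(\xi)\le m$) is correct. The gap is in the quantitative calibration of stage three, and it is not a minor one. You assert that a per-term tolerance $\delta\sim\xi^{-(1/q+1/2)}$ suffices because $\sum_{(\bs,\be,\bk)\in G(\xi)}\norm{v(\by_{\bs-\be;\bk})}{X}\,\delta=O(\xi\delta)$. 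But the point values $\norm{v(\by_{\bs-\be;\bk})}{X}$ are not uniformly bounded: writing $v(\by_{\bs-\be;\bk})=\sum_{\bs'}v_{\bs'}H_{\bs'}(\by_{\bs-\be;\bk})$ and using Cram\'er's bound together with $|y_{s;k}|\lesssim |k|/\sqrt{s}$, one only gets $\sum_{\bk\in\pi_{\bs-\be}}|H_{\bs'}(\by_{\bs-\be;\bk})|\le e^{K|\bs|_1}$ (Lemma \ref{lemma:sumH_s'}), so the relevant sum grows like $\xi\,e^{K\xi^{1/\theta q}}$, not like $\xi$. In addition, whichever way you realize the product, the tolerance of the elementary product networks is multiplied by the magnitudes of what is being multiplied: the coefficients $b^{\bs-\be;\bk}_{\bell}$ together with the rescaling factor $(4\sqrt{\omega})^{|\bell|_1}$ (or, in your factor-by-factor variant, the sup-norms $\norm{L_{s_j-e_j;k_j}}{L_\infty^{\sqrt g}(\RR)}\le e^{Ks_j}$ of Lemma \ref{lemma: normL_{s;k}<}) contribute another factor of order $\exp\brac{K\xi^{1/\theta q}\log\xi}$. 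Consequently $\delta$ must be taken exponentially small, $\log\delta^{-1}\asymp\xi^{1/\theta q}\log\xi$ as in \eqref{delta^{-1}:=}, not polynomially small. The approach survives this correction — precisely because $\log\delta^{-1}=O(\xi^{1/\theta q}\log\xi)$ is what produces the exponents in (iii) and (iv) — but as written your error accounting for (vi) fails.

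A secondary issue concerns your construction of $\phi_{\bs-\be;\bk}$ as a product network applied to the univariate factor networks: Lemma \ref{lem:multi} approximates $\prod_j x_j$ only for $\bx\in[-1,1]^d$, whereas the values of $L_{s_j-e_j;k_j}$ on $B^1_\omega$ are exponentially large in $s_j$. You would need to normalize each factor and then track how the normalization constants amplify the product error — which is exactly the phenomenon above resurfacing. The paper sidesteps this by expanding $L_{\bs-\be;\bk}$ into monomials, rescaling the \emph{variables} via $y_j\mapsto y_j/(4\sqrt{\omega})\in[-1,1]$ so that Lemma \ref{lem-product-varphi} applies directly (with the cutoff $\varphi_1$, $\varphi_0$ built in), and paying for the rescaling through the factor $(4\sqrt{\omega})^{|\bell|_1}$ absorbed into $\delta^{-1}$. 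Either route can be made to work, but yours needs this normalization step spelled out, and in both cases the required $\delta$ is exponentially, not polynomially, small in $\xi^{1/\theta q}$.
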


Let us briefly draw a plan  of the proof of this theorem.
We will give a detailed proof for the case $U=\RRi$ and then point out that  the case $U= \RRM$ can be proven in the same  way with slight modification. 

 In what follows in this 
 section, all  definitions, formulas  and assertions are given for the case $U= \RRi$, and for $\xi >1$, we use the letters $m$ and 
 $\omega$ only for the notations 
 \begin{align} \label{m,omega}
 m :=\lfloor K_q\xi \rfloor, \quad \omega := \lfloor {K_{q,\theta}}\xi \rfloor,
 \end{align}
 where $K_q$ and $K_{q,\theta}$ are the constants defined   
 in Lemma  \ref{lemma: |s|_1, |s|_0} in Appendix. 
As mentioned above, we primarily approximate 
$v \in L_2(\RRi,X,\gamma)$ 
by the gpc interpolation $I_{\Lambda(\xi)} v$.    In the next step, we approximate 
$I_{\Lambda(\xi)} v $ by its truncation 
$I_{\Lambda(\xi)}^{\omega}v$ on the 
super-cube $ B^m_\omega$, which will be constructed below.
  The final step is to construct a deep ReLU  neural network  $\bphi_{\Lambda(\xi)}:= (\phi_{\bs - \be;\bk})_{(\bs,\be,\bk) \in G(\xi)}$ to approximate 
$I_{\Lambda(\xi)}^{\omega}v$ by $\Phi_{\Lambda(\xi)}v$ of the form \eqref{Phi_v}.

 For a function $\varphi$ defined on $\RR$, 
we denote by $\varphi^{\omega}$ the truncation of $\varphi$ on $B^1_\omega$, i.e.,
\begin{align}
\varphi^{\omega}(y)
:=
\begin{cases}
\varphi(y)
&
\text{if }
y \in B^1_\omega
\\
0
&
\text{otherwise}.
\label{truncation}
\end{cases}
\end{align}
If $\supp (\bs) \subset \{1,...,m\}$, we 
put
$$
L_{\bs,\bk}^{\omega}(\boldsymbol{y})
:=
\prod_{j=1}^m L_{s_j;k_j}^{\omega}(y_j),\qquad \by\in \RR^m. 
$$
We have
$
L_{\bs,\bk}^{\omega}(\boldsymbol{y})
=
\prod_{j=1}^m L_{s_j;k_j}(y_j)
$
if 
$
\boldsymbol{y}\in B^m_\omega $, and $L_{\bs,\bk}^{\omega}(\boldsymbol{y})=0$ otherwise.
For a function $v \in L_2^\Ee(\RRi,X,\gamma)$,
we define
\begin{equation} \label{def:I_Lambda(xi)^omega}
I_{\Lambda(\xi)}^\omega(v)				
:= 
\sum_{(\bs,\be,\bk) \in G(\xi)}(-1)^{|\be|_1} v(\by_{\bs - \be;\bk}) L_{\bs - \be;\bk}^\omega.
\end{equation}

Let the assumptions of Theorem 
\ref{thm1-dnn} hold. By Lemma \ref{lemma: |s|_1, |s|_0}(iii) in Appendix for every $\xi >2$ we have  $m(\xi) \le m$. Hence,  for every $(\bs,\be,\bk) \in G(\xi)$, $L_{\bs - \be;\bk}$ and  $L_{\bs - \be;\bk}^\omega$ and therefore, $I_{\Lambda(\xi)}v $ and $I_{\Lambda(\xi)}^{\omega} v$ can be considered as functions on $\RRm$.  For $g \in L_2(\RRm,X,\gamma)$, we have $\norm{g}{ L_2(\RRm,X,\gamma)} = \norm{g}{ L_2(\RRi,X,\gamma)}$ in the sense of extension of $g$. We will make use of these facts  without mention.

To prove Theorem \ref{thm1-dnn} we will use some intermediate approximations for estimation of the approximation error as in \eqref{approximation-error}. Suppose that the deep ReLU  neural network  $\bphi_{\Lambda(\xi)}$ and therefore, the function $\Phi_{\Lambda(\xi)}$ are already constructed.  By the triangle inequality we have 
\begin{equation}\label{intermediate-approx}
\begin{aligned}
\|v- \Phi_{\Lambda(\xi)} v\|_{L_2(\RRi,X,\gamma)}
& \leq \|v-I_{\Lambda(\xi)}v  \|_{L_2(\RRi,X,\gamma)}
+ \|I_{\Lambda(\xi)}v - I_{\Lambda(\xi)}^{\omega}v\|_{{L_2(\RRm \setminus B^m_\omega,X,\gamma)}}
\\
& 
+ \| I_{\Lambda(\xi)}^{\omega}v- \Phi_{\Lambda(\xi)} v\|_{L_2(B^m_\omega,X,\gamma)}
+ \| \Phi_{\Lambda(\xi)} v  \|_{L_2(\RRm \setminus B^m_\omega,X,\gamma)}.
\end{aligned}
\end{equation}
Hence the estimate \eqref{approximation-error} will be done via the bound $C\xi^{-(1/q - 1/2)}$ for every of the four terms in the right-hand side. The first term is  already  estimated as in Theorem \ref{lemma:coll-approx}.  The estimates for the others  will  be carried out in the following lemmata 
(Lemmata \ref{lemma:S-S^omega}--\ref{lemma:G_Lambda}). To complete the proof of Theorem \ref{thm1-dnn} we have  also to prove the bounds of the size and depth of  $\bphi_{\Lambda(\xi)}$ according to the items (iii) and (iv) which  are given  in Lemma \ref{lemma:size} below.

	For $v \in L_2^\Ee(\RRi,X,\gamma)$ satisfying Assumption {\rm (I)}, by Lemma \ref{lemma[AbsConv]} the 
series \eqref{series} converges  unconditionally in 
$L_2(\RRi,X,\gamma)$  to $v$.  Therefore, the formula \eqref{I_Lambda=} for $\Lambda = \Lambda(\xi)$ can be rewritten as
\begin{equation} \label{I_Lambda(xi)=1}
I_{\Lambda(\xi)}(v)				
\ = \
\sum_{\bs \in \Lambda(\xi)} \sum_{\bs' \in \FF}v_{\bs'} \sum_{\be \in E_\bs} (-1)^{|\be|_1} 
\sum_{\bk \in \pi_{\bs - \be}} H_{\bs'}(\by_{\bs - \be;\bk}) L_{\bs - \be;\bk}.
\end{equation}
Hence,  we also have by the definition \eqref{def:I_Lambda(xi)^omega}
\begin{equation} \label{I_Lambda(xi)^omega=1}
I_{\Lambda(\xi)}^\omega(v)				
= 
\sum_{\bs \in \Lambda(\xi)} \sum_{\bs' \in \FF}v_{\bs'} \sum_{\be \in E_\bs} (-1)^{|\be|_1} 
\sum_{\bk \in \pi_{\bs - \be}} H_{\bs'}(\by_{\bs - \be;\bk}) L_{\bs - \be;\bk}^\omega.
\end{equation}  
 
\begin{lemma}\label{lemma:S-S^omega}
		Under the assumptions of Theorem 
\ref{thm1-dnn}, for every $\xi > 1$, we have that 
\begin{align}
\norm{I_{\Lambda(\xi)}v - I_{\Lambda(\xi)}^{\omega} v}{L_2(\RRi,X,\gamma)}
&
\leq
C\xi^{-(1/q - 1/2)},
\label{I_Lambda - I_Lambda^omega}
\end{align}
where the constant $C$  is independent of $v$ and $\xi$.
\end{lemma}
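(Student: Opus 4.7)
The plan is to reduce \eqref{I_Lambda - I_Lambda^omega} to an estimate that decays exponentially in $\xi$, which then easily dominates the target polynomial rate $\xi^{-(1/q-1/2)}$.

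\emph{Step 1 (reduction to a tail norm).} The product form of $L_{\bs-\be;\bk}^\omega$ together with the univariate truncation \eqref{truncation} gives $L_{\bs-\be;\bk}^\omega = L_{\bs-\be;\bk}\,\mathbf{1}_{B^m_\omega}$; hence by \eqref{def:I_Lambda(xi)^omega}, $I_{\Lambda(\xi)}^\omega v = I_{\Lambda(\xi)}v\cdot\mathbf{1}_{B^m_\omega}$. Since both functions depend only on the first $m$ variables,
$$
\|I_{\Lambda(\xi)}v-I_{\Lambda(\xi)}^\omega v\|_{L_2(\RRi,X,\gamma)}=\|I_{\Lambda(\xi)}v\|_{L_2(\RR^m\setminus B^m_\omega,X,\gamma)}.
$$
Writing $v=\sum_{\bs'\in\FF}v_{\bs'}H_{\bs'}$ (unconditional by Lemma~\ref{lemma[AbsConv]}), the triangle inequality together with Cauchy-Schwarz weighted by $\bsigma$ and Assumption~(I) reduce the claim to the estimate
$$
\biggl(\sum_{\bs'\in\FF}\sigma_{\bs'}^{-2}\|I_{\Lambda(\xi)}H_{\bs'}\|_{L_2(\RR^m\setminus B^m_\omega,\gamma)}^2\biggr)^{1/2}\le C\,\xi^{-(1/q-1/2)}.
$$

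\emph{Step 2 (tensor-product tail bound for the basis).} Since $\Delta_\bs H_{\bs'}=0$ for $\bs\not\le\bs'$, one has $I_{\Lambda(\xi)}H_{\bs'}=\sum_{\bs\in\Lambda(\xi)\cap R_{\bs'}}\Delta_\bs H_{\bs'}$ with $\Delta_\bs H_{\bs'}(\by)=\prod_{j=1}^m\Delta_{s_j}H_{s'_j}(y_j)$. The covering $\RR^m\setminus B^m_\omega\subset\bigcup_{j=1}^m\{|y_j|>2\sqrt\omega\}$ gives
$$
\|\Delta_\bs H_{\bs'}\|_{L_2(\RR^m\setminus B^m_\omega,\gamma)}^2\le\sum_{j=1}^m\Bigl(\int_{|y|>2\sqrt\omega}|\Delta_{s_j}H_{s'_j}(y)|^2 g(y)\,\rd y\Bigr)\prod_{i\ne j}\|\Delta_{s_i}H_{s'_i}\|_{L_2(\RR,\gamma)}^2.
$$
For $j\notin\supp(\bs)$, the factor $\Delta_{s_j}H_{s'_j}(y_j)=H_{s'_j}(0)$ is constant in $y_j$, so the tail integral equals $|H_{s'_j}(0)|^2\,\gamma(|y|>2\sqrt\omega)\le Ce^{-c\omega}|H_{s'_j}(0)|^2$. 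For $j\in\supp(\bs)$, combining Lemma~\ref{lemma[Delta_{bs}]} with Cram\'er \eqref{CramerBnd} and factoring the integrand through the weight $\sqrt g$ (whose one-sided tail $\int_{|y|>2\sqrt\omega}\sqrt g\,\rd y\le Ce^{-\omega}/\sqrt\omega$ decays exponentially) yields $\int_{|y|>2\sqrt\omega}|\Delta_{s_j}H_{s'_j}|^2 g\,\rd y\le C(1+s_j)^{\theta'}(1+s'_j)^{\theta'}e^{-c\omega}$ for some $\theta'=O(1)$. The remaining $L_2(\RR,\gamma)$ factors with $i\ne j$ are bounded similarly via Lemma~\ref{lemma[Delta_{bs}]} and Cram\'er. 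Summing in $j$ and then in $\bs\in\Lambda(\xi)\cap R_{\bs'}$ (whose cardinality is $\le p_{\bs'}(1,1)$) gives
$$
\|I_{\Lambda(\xi)}H_{\bs'}\|_{L_2(\RR^m\setminus B^m_\omega,\gamma)}\le C\,e^{-c\omega/2}\,p_{\bs'}(\theta/2,\lambda).
$$

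\emph{Step 3 (assembly).} Inserting this into the reduced estimate of Step 1 and noting $p_{\bs'}(\theta/2,\lambda)^2=p_{\bs'}(\theta,\lambda)$, the sum $\sum_{\bs'\in\FF}\sigma_{\bs'}^{-2}p_{\bs'}(\theta,\lambda)$ is finite by Cauchy-Schwarz together with the embeddings $\bsigma^{-1}\bp(\theta),\,\bsigma^{-1}\in\ell_q(\FF)\subset\ell_2(\FF)$, both of which follow from $\|\bp(\theta)\bsigma^{-1}\|_{\ell_q(\FF)}\le C$. Hence the whole quantity is $\le Ce^{-c\omega/2}$, and with $\omega=\lfloor K_{q,\theta}\xi\rfloor$ from \eqref{m,omega} this is $\le Ce^{-c'\xi}$, which is far smaller than $\xi^{-(1/q-1/2)}$ for every $\xi>1$. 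The main obstacle is the univariate tail bound in Step 2 for $j\in\supp(\bs)$: the pointwise estimate from Lemma~\ref{lemma[Delta_{bs}]} alone gives $|\Delta_{s_j}H_{s'_j}|^2 g\le M^2$, which is not integrable on $\RR$, so one must carefully factor the integrand through $\sqrt g$ and absorb the arising polynomial factors in $(1+s_j)$ into $\bp(\theta)\bsigma^{-1}\in\ell_q$; this is exactly the reason the calibration constant $K_{q,\theta}$ in \eqref{m,omega} must be chosen sufficiently large.
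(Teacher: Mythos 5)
Your overall architecture is sound and genuinely different from the paper's: you exploit the exact identity $I_{\Lambda(\xi)}^{\omega}v = I_{\Lambda(\xi)}v\cdot\mathbf{1}_{B^m_\omega}$, pass to the Hermite coefficients by a $\bsigma$-weighted Cauchy--Schwarz, and then estimate $\|I_{\Lambda(\xi)}H_{\bs'}\|_{L_2(\RR^m\setminus B^m_\omega,\gamma)}$ coordinate by coordinate; the paper instead applies the triangle inequality to the full triple sum in \eqref{I_Lambda(xi)^omega=1} and bounds each Lagrange basis polynomial $L_{\bs-\be;\bk}$ on $\RR^m\setminus B^m_\omega$ via Lemma \ref{l:Rm trun1}, combined with the bounds of Lemmas \ref{lemma:sumH_s'} and \ref{lemma: normL_{s;k}<}. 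However, there is a genuine gap at the heart of your Step 2, precisely at the point you yourself flag as ``the main obstacle.''

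The univariate tail integral $\int_{|y|>2\sqrt\omega}|\Delta_{s_j}H_{s'_j}|^2 g\,\rd y$ cannot be controlled by ``factoring the integrand through $\sqrt g$.'' The only pointwise information supplied by Lemma \ref{lemma[Delta_{bs}]} and Cram\'er's bound \eqref{CramerBnd} is $|\Delta_{s}H_{s'}(y)|\sqrt{g(y)}\le(1+C_\varepsilon s)^{1/6+\varepsilon}=:M$. If you peel off a factor $\sqrt{g}$ so as to use $\int_{|y|>2\sqrt\omega}\sqrt{g}\,\rd y\le Ce^{-\omega}$, the remaining factor is $|\Delta_s H_{s'}(y)|^2\sqrt{g(y)}=\bigl(|\Delta_s H_{s'}(y)|\,g(y)^{1/4}\bigr)^2\le M^2 g(y)^{-1/2}\sim M^2e^{y^2/4}$, which is unbounded; if instead you keep the full weight, $|\Delta_s H_{s'}(y)|^2 g(y)\le M^2$ is not integrable over the unbounded tail. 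No splitting of the Gaussian weight between a sup-norm and an integral closes this. What is genuinely needed is the fact that a polynomial of degree at most $\omega$ has exponentially small $L_2(\gamma)$-mass outside $[-2\sqrt\omega,2\sqrt\omega]$, i.e.\ Lemma \ref{l:Rm trun1} (in its univariate form, applicable because $\deg\Delta_{s_j}H_{s'_j}\le s_j\le m_1(\xi)\le\omega$) --- a tool your proposal never invokes. Once it is brought in, your claimed estimate does hold, even with polynomial factors in $s_j$, via a short bootstrap: $\|\Delta_s H_{s'}\|_{L_2(\RR,\gamma)}^2\le 4\sqrt{s}\,M^2+Ce^{-Ks}\|\Delta_s H_{s'}\|_{L_2(\RR,\gamma)}^2$. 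Since $e^{-c\omega}=e^{-c'\xi}$ dominates every factor of size $e^{O(\xi^{1/\theta q})}$, the rest of your Steps 2--3 then goes through. So the proof is repairable, but as written the decisive analytic step is unjustified and the mechanism you propose for it fails.
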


\begin{proof} 
	By  the equality 
		\begin{align}
	\norm{L_{\bs - \be;\bk} - L_{\bs - \be;\bk}^\omega}{L_2(\RRi,\gamma)}
	& = \ \norm{L_{\bs - \be;\bk}}{L_2(\RR^m\setminus B^m_\omega,\gamma)},
	\quad \forall (\bs,\be,\bk) \in G(\xi),
	\notag
\end{align}
 and the triangle inequality, noting \eqref{I_Lambda(xi)=1} and \eqref{I_Lambda(xi)^omega=1},  we obtain 
	\begin{align}
	\norm{I_{\Lambda(\xi)}v  -  I_{\Lambda(\xi)}^{\omega}v}{L_2(\RRi,X,\gamma)}
	& 
	\leq 
	\sum_{\bs \in \Lambda(\xi)} \sum_{\bs' \in \FF}	\norm{v_{\bs'}}{X} \sum_{\be \in E_\bs}
	\sum_{\bk \in \pi_{\bs - \be}} |H_{\bs'}(\by_{\bs - \be;\bk})|\norm{L_{\bs - \be;\bk}}{L_2(\RR^m\setminus B^m_\omega,\gamma)}.
	\notag
	\end{align}
	Let  $(\bs,\be,\bk) \in G(\xi)$ be given. Then we have
	$$
	L_{\bs - \be;\bk}
	=
	\prod_{j =1}^m  L_{s_j - e_j;k_j}(y_j), \quad \by \in \RR^m,
	$$ 
	where $L_{s_j - e_j;k_j}$ is a polynomial in variable $y_j$, 
of degree not greater than $m_1(\xi) $.
Hence, applying Lemma~\ref{l:Rm trun1}  in Appendix with taking account of \eqref{m,omega} gives
	\begin{align}
\norm{L_{\bs - \be;\bk}}{L_2(\RR^m\setminus B^m_\omega,\gamma)}
	\le 
C\xi
e^{- K_1\xi}
\norm{L_{\bs - \be;\bk}}{L_2(\RR^m,\gamma)}.
\notag
\end{align}
From Lemmas \ref{lemma:sumH_s'} and \ref{lemma: normL_{s;k}<} and Lemma \ref{lemma: |s|_1, |s|_0}(ii)  in Appendix we derive that 
	\begin{align*}
\norm{L_{\bs - \be;\bk}}{L_2(\RR^m,\gamma)}
&=
 \prod_{j \in \NN} \norm{L_{s_j - e_j;k_j}}{L_2(\RR,\gamma)}
 \le 
  \prod_{j \in \NN} e^{K_2 (s_j - e_j)}
  \\
 &
  \le 
  \prod_{j \in \NN} e^{K_2 s_j }
 = e^{K_2 |\bs|_1}
 \le 
 e^{K_2 m_1(\xi)}
  \le e^{K_3\xi^{1/\theta q}},
\notag
\end{align*}
and
\begin{align} 
\sum_{\bk \in \pi_{\bs - \be}} |H_{\bs'}(\by_{\bs - \be;\bk})| 
\le  e^{K_4|\bs|_1}
 \le 
e^{K_4 m_1(\xi)}
  \le e^{K_5\xi^{1/\theta q}}.
\label{H_{bs'}<}
\end{align}
Summing up,  we arrive at
	\begin{align}
\norm{I_{\Lambda(\xi)}v  -  I_{\Lambda(\xi)}^{\omega}v}{L_2(\RRi,X,\gamma)}
& 
\leq 
C_1\xi
\exp\Big(- K_1\xi + (K_2 + K_5) \xi^{1/\theta q}\Big)
\sum_{\bs \in \Lambda(\xi)} \sum_{\bs' \in \FF}	\norm{v_{\bs'}}{X} \sum_{\be \in E_\bs} 1 
\notag
\\ & 
\le 
C_1 \xi \exp\Big(- K_1\xi + K_6 \xi^{1/\theta q}\Big) 
|G(\xi)|  \sum_{\bs' \in \FF}	\norm{v_{\bs'}}{X}.
\notag
\end{align}
Hence, by Lemma \ref{lemma[AbsConv]}, Lemma \ref{lemma:G(xi)<}   in Appendix and the inequality $1/\theta q \le 1/3$ we get 
	\begin{align}
\norm{I_{\Lambda(\xi)}v  -  I_{\Lambda(\xi)}^{\omega}v}{L_2(\RRi,X,\gamma)}
\le 
C_2 \xi^2 \exp\Big(- K_1\xi + K_6 \xi^{1/\theta q}\Big) 
\le 
C\xi^{-(1/q - 1/2)}.
\notag
\end{align}
	\hfill
\end{proof}

The previous lemma gives  the bound of the second term in  the right-hand side of \eqref{intermediate-approx}, i.e., the error bound for the approximation of sparse-grid Lagrange interpolation $I_{\Lambda(\xi)}v$ by its truncation  $I_{\Lambda(\xi)}^{\omega}v$ on $B_m^\omega$ for $v \in L_2(\RRi,X,\gamma)$. As the next step, we will construct a deep ReLU neural network  $\bphi_{\Lambda(\xi)}:= \brac{\phi_{\bs - \be;\bk}}_{(\bs,\be,\bk) \in G(\xi)}$ on $\RRm$ for approximating $I_{\Lambda(\xi)}^{\omega}v$ by the function $\Phi_{\Lambda(\xi)}v$ given  as in \eqref{Phi_v}, and prove the bound of the error as the third term in  the right-hand side of \eqref{intermediate-approx}.

For $s\in \NN_0$, we represent  the univariate  interpolation polynomial $L_{s;k}$ in the form of linear combination of monomials:
\begin{align}\label{L_s}
 L_{s;k}(y) =:	\sum_{\ell=0}^s b^{s;k}_\ell y^\ell.  
\end{align} 
From \eqref{L_s} for each  $(\bs,\be,\bk) \in G(\xi)$  we have 
\begin{align}
L_{\bs - \be;\bk}
=
\sum_{\bell=\boldsymbol{0}}^{\bs - \be}  b^{\bs - \be;\bk}_{\bell} \by^{\bell},
\label{L_{bs-be}}
\end{align}
where the summation $\sum_{\bell=\boldsymbol{0}}^{\bs - \be}$ means that the sum is taken over  all $\bell$ such that 
$\boldsymbol{0} \le \bell \le {\bs - \be}$, and 
$$
b^{\bs - \be;\bk}_{\bell} = \prod_{j=1}^m  b^{s_j - e_j;k_j}_{\ell_j}, \quad 
\by^{\bell}= \prod_{j=1}^m y_j^{\ell_j}.
$$
Indeed,  we have 
\begin{align}
L_{\bs - \be;\bk}
&=
\prod_{j =1}^m  L_{s_j - e_j;k_j}(y_j)
=
\prod_{j =1}^m\sum_{\ell_j=0}^{s_j-e_j} b^{s_j-e_j;k_j}_{\ell_j} y_j^{\ell_j}
\notag
\\
&=
\sum_{\bell=\boldsymbol{0}}^{\bs - \be} 
\brac{\prod_{j=1}^m  b^{s_j - e_j;k_j}_{\ell_j}} \by^{\bell} 
= 
\sum_{\bell=\boldsymbol{0}}^{\bs - \be}  b^{\bs - \be;\bk}_{\bell} \by^{\bell}.
\notag
\end{align}
By \eqref{I_Lambda(xi)^omega=1}  and \eqref{L_{bs-be}} we get for every $\by \in B^m_\omega$,
	\begin{equation} \label{I_Lambda(xi)^omega=2}
I_{\Lambda(\xi)}^\omega(v)(\by)				
= 
\sum_{(\bs,\be,\bk) \in G(\xi)}  (-1)^{|\be|_1} v(\by_{\bs - \be;\bk})
\sum_{\bell=\boldsymbol{0}}^{\bs - \be}  b^{\bs - \be;\bk}_{\bell}
\brac{2\sqrt{\omega}}^{|\bell|_1} \prod_{j \in \supp(\bell)}\brac{\frac{y_j}{2\sqrt{\omega}}}^{\ell_j}.
\end{equation}

%
%
Let
$\bell \in \FF$ be such that  $\boldsymbol{0} \le \bell  \le 
	\bs - \be$. By defintion we have $\supp(\bell) \subset \supp(\bs) $. By changing variables 
	$$
	\bx= \frac{\by}{2\sqrt{\omega}}, \ \ \bx \in \supp(\bs),
	$$ 
	we have
	\begin{equation} 
	\prod_{j \in \supp(\bell)} \left(\frac{y_j}{2\sqrt{\omega}}\right)^{\ell_j}
	= 
	\prod_{j \in \supp(\bell)} 
	\varphi_1^{\ell_j}\brac{\frac{y_j}{2\sqrt{\omega}}} \prod_{j \in \supp(\bs) \setminus\supp(\bell)} 
	\varphi_0\brac{\frac{y_j}{2\sqrt{\omega}}}=h^{\bs - \be}_{\bell}(\bx), 
	\ \ \by \in B^{|\supp(\bs)|}_\omega,
\end{equation}
where
	\begin{equation} \label{varphi=}
	h^{\bs - \be}_{\bell}(\bx) 
		:=
		\prod_{j \in \supp(\bell)} \varphi_1^{\ell_j}(x_j)
		\prod_{j \in \supp(\bs)\setminus\supp(\bell)}
		\varphi_0\brac{x_j}, 
\end{equation}
$\varphi_0$ and $\varphi_1$ are the piece-wise linear functions defined before Lemma~\ref{lem-product-varphi}.
We put
\begin{align}\label{B-bs:=}
B_\bs:=  
\displaystyle
\max_{\be \in E_\bs, \, \bk \in \pi_{\bs - \be}} \ \max_{\boldsymbol{0}\leq \bell\le \bs- \be} \big|b^{\bs - \be;\bk}_{\bell}\big|,
\end{align}
and
\begin{align}\label{delta^{-1}:=}
\delta^{-1}
:= 
\xi^{1/q - 1/2} \sum_{\bs \in \Lambda(\xi)} e^{K|\bs|_1} p_\bs(2)
\brac{2\sqrt{\omega}}^{|\bs|_1} B_\bs, 
\end{align}
where $K$ is the constant in Lemma \ref{lemma:sumH_s'} in Appendix.
Hence, by applying Lemma \ref{lem-product-varphi} to the product in  the left-hand side of \eqref{varphi=}, for every  $(\bs,\be,\bk) \in G(\xi)$  and $\bell$ 
satisfying  $\boldsymbol{0} < \bell  \le \bs-\be$,
there exists a deep  ReLU neural network
$\phi^{\bs - \be}_{\bell}$ on $\RR^{|\supp(\bs)|}$ with 
$\supp \big(\phi^{\bs - \be}_{\bell}\big) \subset [-2,2]^{|\supp(\bs)|}$
such that 
\begin{align}\label{<delta} 	
\sup_{\by \in B^{|\supp(\bs)|}_{\omega}}\abs{\prod_{j \in \supp(\bs)}  
	\brac{\frac{y_j}{2\sqrt{\omega}}}^{\ell_j} 	-	\phi^{\bs - \be}_{\bell}\brac{ \frac{\by}{\sqrt{\omega}}}}
\ \le \
\sup_{\by \in B^{|\supp(\bs)|}_{4\omega}}\abs{
	h^{\bs - \be}_{\bell}\brac{ \frac{\by}{2\sqrt{\omega}}} -	\phi^{\bs - \be}_{\bell}\brac{ \frac{\by}{2\sqrt{\omega}}}}
\ \le \
\delta,
\end{align}
and
\begin{align}\label{supp(g^ell_s)} 
\supp\bigg(\phi^{\bs - \be}_{\bell}\brac{\frac{\cdot}{2\sqrt{\omega}}}\bigg) \subset B^{|\supp(\bs)|}_{4\omega}. 
\end{align}
Also, from Lemma \ref{lem-product-varphi} and the inequalities 
$|\bell|_1 + |\supp(\bs)\setminus\supp(\bell)| \le |\bs|_1 \leq \delta^{-1}$ one can see that
\begin{align} \label{W(phi_s,ell)}
W\brac{\phi^{\bs - \be}_{\bell}}
&
\leq
C
\brac{ 
	1 
	+ 
	\abs{\bs}_1 
	\brac{ 
		\log\abs{\bs}_1
		+
		\log  
		\delta^{-1}
	}
} 
\leq
C
\brac{ 
	1 
	+ 
	\abs{\bs}_1   
	\log  
	\delta^{-1}
}
\end{align}
and
\begin{align} \label{L(phi_s,ell)}
L\brac{\phi^{\bs - \be}_{\bell}}
\leq
C
\brac{ 1 + \log \abs{\bs}_1 \brac{\log\abs{\bs}_1 + \log \delta^{-1}}} 
\leq
C
\brac{ 1 + \log	\abs{\bs}_1   \log \delta^{-1}}.
\end{align}

We define the deep ReLU neural network $\phi_{\bs - \be;\bk}$ on  $\RR^{|\supp (\bs)|}$ by
\begin{align} \label{g_Lambda^*}
\phi_{\bs - \be;\bk}(\by)
&
:=
\sum_{\bell=\boldsymbol{0}}^{\bs - \be} 
b^{\bs - \be;\bk}_{\bell}
\brac{2\sqrt{\omega}}^{|\bell|_1}
\phi^{\bs - \be}_{\bell}
\brac{ \frac{\by}{2\sqrt{\omega}}},
\quad \by \in \RR^{|\supp (\bs)|},
\end{align}
which is the  parallelization deep ReLU neural network of the component deep ReLU neural networks 
$\phi^{\bs - \be}_{\bell}\brac{\frac{\cdot}{2\sqrt{\omega}}}$. 
From \eqref{supp(g^ell_s)} it follows
\begin{align}\label{supp(g)} 
\supp \brac{\phi_{\bs - \be;\bk}} \subset B^{|\supp (\bs)|}_{4\omega}.
\end{align}

According to the above convention,  for $(\bs,\be,\bk) \in G(\xi)$, in some places  without mention we identify  the functions $\phi^{\bs - \be}_{\bell}$  and $\phi_{\bs - \be;\bk}$ on $\RR^{|\supp (\bs)|}$  with their extentions  on $\RRm$ or on $\RRi$ due to the inclusions  $\supp(\bs) \subset \{1,...,m\} \subset \NN$. 

We define $\bphi_{\Lambda(\xi)}:= \brac{\phi_{\bs - \be;\bk}}_{(\bs,\be,\bk) \in G(\xi)}$ as the deep ReLU neural network on $\RRm$ which is realized by 
parallelization of $\phi_{\bs - \be;\bk}, \ (\bs,\be,\bk) \in G(\xi)$. Consider 
the approximation of $I_{\Lambda(\xi)}^{\omega}v$ by the function $\Phi_{\Lambda(\xi)} v$ where for convenience  we recall 
\begin{equation} \label{Phi_Lambda(xi):=}
\Phi_{\Lambda(\xi)} v(\by) := \sum_{(\bs,\be,\bk) \in G(\xi)} (-1)^{|\be|_1}  v(\by_{\bs - \be;\bk}) \phi_{\bs - \be;\bk}(\by).
\end{equation}

\begin{lemma}\label{lemma:I-g}
	Under the assumptions of Theorem \ref{thm1-dnn}, for every $\xi > 1$, we have
	\begin{align}
	\norm{I_{\Lambda(\xi)}^{\omega} v - \Phi_{\Lambda(\xi)} u}{L_2(B^m_\omega ,X,\gamma)}
	& \leq 
	C \xi^{-(1/q - 1/2)},
	\label{err flambda xi}
	\end{align}  
	where the constant $C$ is independent of $v$ and $\xi$.
\end{lemma}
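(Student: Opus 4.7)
The plan is a direct propagation-of-error argument, calibrated so that the bound produced matches exactly the definition \eqref{delta^{-1}:=} of $\delta^{-1}$. First, using the representations \eqref{I_Lambda(xi)^omega=2} and \eqref{Phi_Lambda(xi):=}--\eqref{g_Lambda^*}, on $B^m_\omega$ the difference decomposes as
\[
I_{\Lambda(\xi)}^\omega v(\by) - \Phi_{\Lambda(\xi)} v(\by) = \sum_{(\bs,\be,\bk)\in G(\xi)} (-1)^{|\be|_1} v(\by_{\bs-\be;\bk}) \sum_{\bell=\boldsymbol{0}}^{\bs-\be} b^{\bs-\be;\bk}_{\bell} (4\sqrt{\omega})^{|\bell|_1} \Psi_{\bs,\be,\bk,\bell}(\by),
\]
where $\Psi_{\bs,\be,\bk,\bell}(\by) := \prod_{j=1}^m (y_j/4\sqrt{\omega})^{\ell_j} - \phi^{\bs-\be;\bk}_{\bell}(\by/4\sqrt{\omega})$. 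By the defining property \eqref{<delta} (trivially also for $\bell=\boldsymbol{0}$, where the factor is reproduced exactly), $|\Psi_{\bs,\be,\bk,\bell}(\by)| \le \delta$ for every $\by\in B^m_\omega$.

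Second, I would apply the triangle inequality in $X$ and estimate the inner $\bell$-sum using \eqref{B-bs:=} together with the crude count $\#\{\bell:\boldsymbol{0}\le\bell\le\bs-\be\} \le \prod_j(1+s_j) = p_\bs(1)$, giving $B_\bs\,p_\bs(1)\,(4\sqrt{\omega})^{|\bs|_1}$. For the point values $\|v(\by_{\bs-\be;\bk})\|_X$, I would expand $v$ in its Hermite series (absolutely convergent by Lemma~\ref{lemma[AbsConv]}) and sum over $\bk\in\pi_{\bs-\be}$ and $\be\in E_\bs$; invoking the uniform-in-$\bs'$ bound on $\sum_\bk |H_{\bs'}(\by_{\bs-\be;\bk})|$ used to derive \eqref{H_{bs'}<}, Lemma~\ref{lemma[AbsConv]} for $\sum_{\bs'}\|v_{\bs'}\|_X$, and $|E_\bs|\le 2^{|\bs|_1}$ (absorbed into the exponential), I obtain
\[
\sum_{\be\in E_\bs}\sum_{\bk\in\pi_{\bs-\be}} \|v(\by_{\bs-\be;\bk})\|_X \le C\, e^{K|\bs|_1}
\]
with the same constant $K$ as in \eqref{delta^{-1}:=}.

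Combining these estimates pointwise on $B^m_\omega$ gives
\[
\|I_{\Lambda(\xi)}^\omega v(\by) - \Phi_{\Lambda(\xi)} v(\by)\|_X \le C\delta \sum_{\bs\in\Lambda(\xi)} e^{K|\bs|_1} p_\bs(1) B_\bs (4\sqrt{\omega})^{|\bs|_1}.
\]
Since $p_\bs(1)\le p_\bs(2)$, the very definition \eqref{delta^{-1}:=} of $\delta^{-1}$ bounds the right-hand side by $C\xi^{-(1/q-1/2)}$. This yields the $L_\infty(B^m_\omega,X)$ estimate, and the claimed $L_2(B^m_\omega,X,\gamma)$ bound follows since $\gamma(B^m_\omega)\le 1$.

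The main obstacle is purely bookkeeping: every exponential and polynomial factor in $|\bs|_1$ produced by the Hermite point-value estimates and the set $E_\bs$ must be absorbed by the $e^{K|\bs|_1} p_\bs(2)$ built into $\delta^{-1}$. Since $\delta$ was engineered precisely with this calculation in mind, no substantive new analytic ingredient is required beyond the lemmas already at hand.
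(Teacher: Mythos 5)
Your proposal follows the paper's own proof essentially step for step: expand $v(\by_{\bs-\be;\bk})$ in the Hermite series, replace each monomial product by its ReLU surrogate with pointwise error $\delta$ on $B^m_\omega$, and cash in the definition \eqref{delta^{-1}:=} of $\delta$. One bookkeeping claim is wrong as stated, however: you cannot absorb $|E_\bs|$ into the exponential ``with the same constant $K$''. Bounding $|E_\bs|$ by $2^{|\bs|_1}$ and folding it into $e^{K|\bs|_1}$ enlarges $K$ to $K+\log 2$, and since $|\bs|_1$ can be of order $\xi^{1/\theta q}$ the resulting sum is no longer dominated term by term by the one defining $\delta^{-1}$ (one would need $2^{|\bs|_1}p_\bs(1)\le C\,p_\bs(2)$, which fails already for $\bs=(n,0,0,\dots)$). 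The correct accounting, which is what the paper does, is $|E_\bs|\le 2^{|\bs|_0}\le p_\bs(1)$; this second factor $p_\bs(1)$, multiplied by the $p_\bs(1)$ you already extracted from the $\bell$-count, is exactly the $p_\bs(2)$ built into \eqref{delta^{-1}:=}. So that $p_\bs(2)$ is not slack, as your closing remark ``$p_\bs(1)\le p_\bs(2)$'' suggests, but is consumed precisely at this point. With that one correction the argument closes exactly as you describe, and the final passage to $L_2(B^m_\omega,X,\gamma)$ via $\gamma(B^m_\omega)\le 1$ is fine.
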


\begin{proof}
	According to Lemma \ref{lemma[AbsConv]} the series \eqref{series} converges uncondionally to $v$. Hence,
	for every $\by \in B^m_\omega$, we have by \eqref{I_Lambda(xi)^omega=1} 
	\begin{equation} \label{I_Lambda(xi)^omega=3}
	I_{\Lambda(\xi)}^\omega(v) (\by)				
	= 
	\sum_{\bs \in \Lambda(\xi)} \sum_{\bs' \in \FF}v_{\bs'} \sum_{\be \in E_\bs} (-1)^{|\be|_1} 
	\sum_{\bk \in \pi_{\bs - \be}} H_{\bs'}(\by_{\bs - \be;\bk})
	\sum_{\bell=\boldsymbol{0}}^{\bs - \be}  b^{\bs - \be;\bk}_{\bell}
	\brac{2\sqrt{\omega}}^{|\bell|_1} 
	\prod_{j \in \supp(\bs)}\brac{\frac{y_j}{2\sqrt{\omega}}}^{\ell_j},
	\end{equation}
	and by  \eqref{Phi_Lambda(xi):=}
	\begin{equation} \label{Phi_Lambda(xi)=}
\Phi_{\Lambda(\xi)} v (\by)				
= 
\sum_{\bs \in \Lambda(\xi)} \sum_{\bs' \in \FF}v_{\bs'} \sum_{\be \in E_\bs} (-1)^{|\be|_1} 
\sum_{\bk \in \pi_{\bs - \be}} H_{\bs'}(\by_{\bs - \be;\bk})
\sum_{\bell=\boldsymbol{0}}^{\bs - \be}  b^{\bs - \be;\bk}_{\bell} \brac{2\sqrt{\omega}}^{|\bell|_1}
\phi^{\bs - \be}_{\bell}
\brac{\frac{\by}{2\sqrt{\omega}}}.
\end{equation}		
From these formulas and \eqref{<delta} we derive the inequality
	\begin{align} \label{norm{I_Lambda}1}
	\norm{I_{\Lambda(\xi)}^{\omega}v - \Phi_{\Lambda(\xi)} v}{L_2(B^m_\omega,X,\gamma)}
	& \leq 
	\sum_{\bs \in \Lambda(\xi)} \sum_{\bs' \in \FF}\norm{v_{\bs'}}{X} \sum_{\be \in E_\bs} 
		\sum_{\bk \in \pi_{\bs - \be}} |H_{\bs'}(\by_{\bs - \be;\bk})|
		\sum_{\bell=\boldsymbol{0}}^{\bs - \be}  \abs{b^{\bs - \be;\bk}_{\bell}}  \brac{2\sqrt{\omega}}^{|\bell|_1}\delta.
	\end{align}
We have by \eqref{B-bs:=}
		\begin{align*}
	\sum_{\bell=\boldsymbol{0}}^{\bs - \be}  \abs{b^{\bs - \be;\bk}_{\bell}}
	\le B_\bs \prod_{j \in \support(\bs - \be)} s_j
	\le  p_\bs(1) B_\bs,
	\end{align*}
and by Lemma \ref{lemma:sumH_s'}  in Appendix 		
		\begin{align} \label{sum_{b in E,k in pi}}
	\sum_{\be \in E_\bs} 
	\sum_{\bk \in \pi_{\bs - \be}} |H_{\bs'}(\by_{\bs - \be;\bk})|
	\le 
		\sum_{\be \in E_\bs} 	e^{K|\bs-\be|_1}
 \le 
 2^{|\bs|_0}e^{K|\bs|_1}
 \le 
  p_\bs(1) e^{K|\bs|_1}.	
\end{align}
This together with  \eqref{norm{I_Lambda}1}, Lemma \ref{lemma[AbsConv]} and \eqref{delta^{-1}:=} yields	that
		\begin{align}
	\norm{I_{\Lambda(\xi)}^{\omega}v - \Phi_{\Lambda(\xi)} v}{L_2(B^m_\omega,X,\gamma)}
	&\leq 
	\notag
	\sum_{\bs \in \Lambda(\xi)} \  \delta B_\bs p_\bs(1)\sum_{\bs' \in \FF}\norm{v_{\bs'}}{X} \brac{2\sqrt{\omega}}^{|\bs|_1}
	\sum_{\be \in E_\bs} 
	\sum_{\bk \in \pi_{\bs - \be}} |H_{\bs'}(\by_{\bs - \be;\bk})|
	\\
	&\leq 
	\notag
\sum_{\bs' \in \FF}\norm{v_{\bs'}}{X} \ \delta	\sum_{\bs \in \Lambda(\xi)} e^{K|\bs|_1}p_\bs(2)\brac{2\sqrt{\omega}}^{|\bs|_1} B_\bs 
\\
&\le 
C\xi^{-(1/q - 1/2)}.
\notag	
	\end{align}
	\hfill
\end{proof}

In the previous lemma, we   proved  the  bound of the third term in  the right-hand side of \eqref{intermediate-approx}, i.e., the error bound for the approximation of  $I_{\Lambda(\xi)}^{\omega}v$  by the function $\Phi_{\Lambda(\xi)}v$  for $v \in L_2(\RRi,X,\gamma)$. As the last step in the error estimation, we will establish the bound for the fourth term in the right-hand side of \eqref{intermediate-approx}.

\begin{lemma}\label{lemma:G_Lambda}
	Under the assumptions of Theorem \ref{thm1-dnn}, for every $\xi > 1$, we have	
	\begin{align}
	\norm{\Phi_{\Lambda(\xi)} v}{L_2((\RRm \setminus B^m_\omega) ,X,\gamma)}
	& \leq 
	C\xi^{-(1/q - 1/2)},
	\label{norm-g_Lambda}
	\end{align}  
	where the constant $C$ is independent of $v$ and $\xi$.
\end{lemma}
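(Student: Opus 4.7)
My plan is to use the (untruncated) sparse-grid Lagrange interpolant $I_{\Lambda(\xi)} v$ as a bridge. Since $\supp\brac{\Phi_{\Lambda(\xi)}v} \subset B^m_{4\omega}$ by \eqref{supp(g)}, the $L_2$ norm over $\RRm\setminus B^m_\omega$ equals the norm over the annulus $B^m_{4\omega}\setminus B^m_\omega$, and the triangle inequality yields
\begin{equation} \nonumber
\norm{\Phi_{\Lambda(\xi)}v}{L_2(\RRm\setminus B^m_\omega,X,\gamma)}
\le
\norm{\Phi_{\Lambda(\xi)}v - I_{\Lambda(\xi)}v}{L_2(B^m_{4\omega},X,\gamma)}
+ \norm{I_{\Lambda(\xi)}v}{L_2(\RRm\setminus B^m_\omega,X,\gamma)}.
\end{equation}
Each of the two terms I then reduce to an estimate already in hand.

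For the second term I would use that every truncated polynomial $L^\omega_{\bs - \be;\bk}$ vanishes on $\RRm \setminus B^m_\omega$ by \eqref{truncation}, so $I_{\Lambda(\xi)}^\omega v \equiv 0$ there, and hence
\begin{equation} \nonumber
\norm{I_{\Lambda(\xi)}v}{L_2(\RRm\setminus B^m_\omega,X,\gamma)}
= \norm{I_{\Lambda(\xi)}v - I_{\Lambda(\xi)}^\omega v}{L_2(\RRm\setminus B^m_\omega,X,\gamma)}
\le \norm{I_{\Lambda(\xi)}v - I_{\Lambda(\xi)}^\omega v}{L_2(\RRi,X,\gamma)}.
\end{equation}
Lemma \ref{lemma:S-S^omega} immediately dominates the right-hand side by $C\xi^{-(1/q-1/2)}$.

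For the first term I would redo the computation of Lemma \ref{lemma:I-g} on the larger box $B^m_{4\omega}$, with $I_{\Lambda(\xi)}v$ in place of $I_{\Lambda(\xi)}^\omega v$. The step that makes this work is the observation that the pointwise inequality \eqref{<delta}, though stated only on $B^m_\omega$, actually continues to hold on all of $B^m_{4\omega}$: if $\by \in B^m_{4\omega}$ then $y_j/(4\sqrt{\omega}) \in [-1,1]$, on which $\varphi_1$ coincides with the identity, so the monomial identity $\prod_{j}(y_j/(4\sqrt{\omega}))^{\ell_j} = \prod_{j} \varphi_1(y_j/(4\sqrt{\omega}))^{\ell_j}$ persists on this larger box, and the approximation bound in Lemma \ref{lem-product-varphi} (whose supremum is taken over the wider cube $[-2,2]^{|\bell|_1}$) still applies. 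Inserting this extended pointwise bound into the monomial expansion \eqref{L_{bs-be}} and replaying the bookkeeping of Lemma \ref{lemma:I-g} (with the $L_2$ over $B^m_{4\omega}$ controlled by the sup bound times $\sqrt{\gamma(B^m_{4\omega})} \le 1$) produces $\norm{\Phi_{\Lambda(\xi)}v - I_{\Lambda(\xi)}v}{L_2(B^m_{4\omega},X,\gamma)} \le C\xi^{-(1/q-1/2)}$.

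The main obstacle is spotting this extension of \eqref{<delta} from $B^m_\omega$ to $B^m_{4\omega}$; without it one is forced to bound the polynomial $L_{\bs - \be;\bk}$ directly on the annulus $B^m_{4\omega}\setminus B^m_\omega$, where it can be very large, rather than absorbing it into the ReLU comparison. Once this extension is noted, the rest of the argument is a word-for-word replay of the inequality chain in Lemma \ref{lemma:I-g}.
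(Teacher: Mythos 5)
Your argument is correct, and it takes a genuinely different route from the paper's. The paper proves \eqref{norm-g_Lambda} by a direct tail estimate on the formula \eqref{Phi_Lambda(xi)=}: each component network satisfies $\bigl|\phi^{\bs-\be;\bk}_{\bell}\brac{\cdot/(4\sqrt{\omega})}\bigr|\le 2$ on all of $\RRm$ by \eqref{<delta}, so its $L_2(\RRm\setminus B^m_\omega,\gamma)$-norm is at most $C m e^{-K\omega}$ by Lemma \ref{l:Rm trun1}; the coefficient and Hermite-evaluation sums grow only like $\exp\brac{K'\xi^{1/\theta q}\log\xi}$ (Lemmas \ref{lemma: sum |b_ell|}, \ref{lemma:sumH_s'} and \ref{lemma: |s|_1, |s|_0}), and since $\omega\asymp\xi$ and $1/\theta q\le 1/3$ the Gaussian tail wins, in fact yielding a superpolynomially small bound. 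You instead route the estimate through $I_{\Lambda(\xi)}v$ and recycle the two preceding lemmas: your second term is exactly the quantity controlled in Lemma \ref{lemma:S-S^omega} (since $I^{\omega}_{\Lambda(\xi)}v$ vanishes off $B^m_\omega$), and your key observation --- that \eqref{<delta} persists on $B^m_{4\omega}=[-4\sqrt{\omega},4\sqrt{\omega}]^m$ because there $y_j/(4\sqrt{\omega})\in[-1,1]$, where $\varphi_1$ is the identity and the uniform bound of Lemma \ref{lem-product-varphi} on $[-2,2]^{|\bell|_1}$ still applies --- is correct and lets the computation of Lemma \ref{lemma:I-g} run verbatim on the larger box with the same $\delta$. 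Your version is more economical (no new tail computation); the paper's is self-contained and gives a much stronger, exponentially small, bound. One caveat: your opening step leans on the stated support property \eqref{supp(g)}. Since $\supp(\varphi_1)\subset[-2,2]$, the rescaled networks are a priori only guaranteed to vanish outside $[-8\sqrt{\omega},8\sqrt{\omega}]^m=B^m_{16\omega}$, and on the shell $B^m_{16\omega}\setminus B^m_{4\omega}$ the identity $\varphi_1(x)=x$ fails, so your extension of \eqref{<delta} would not cover it; there you would have to fall back on the crude bound $|\phi^{\bs-\be;\bk}_{\bell}|\le 2$ together with the small Gaussian measure of that region --- which is precisely the paper's mechanism. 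Taking \eqref{supp(g)} as stated, your proof is complete.
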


\begin{proof} We use the formula \eqref{Phi_Lambda(xi)=} to estimate the norm 
	$\norm{\Phi_{\Lambda(\xi)} v}{L_2((\RRm \setminus B^m_\omega) ,X,\gamma)}$. We need the following auxiliary inequality
	\begin{align}\label{phi^{bs - be;bk}_{bell}<}
	\left|\phi^{\bs - \be}_{\bell}
	\left( 
	\frac{\by}{2\sqrt{\omega}}\right)\right| \leq 2, \ \forall \by \in \RRm.
	\end{align}
Due to \eqref{supp(g^ell_s)}, it is sufficient to prove this inequality for 
 $\by \in B^{|\supp(\bs)|}_{4\omega}$.
	Considering   the right-hand side of  \eqref{delta^{-1}:=}, we have 
	\begin{align}\label{eq-Absbell-2}
	\sum_{\bs \in \Lambda(\xi)} e^{K|\bs|_1} p_\bs(2)
	\brac{2\sqrt{\omega}}^{|\bs|_1} B_\bs
	\	\ge  \
	e^{K|{\boldsymbol{0}}|_1} p_{\boldsymbol{0}}(2)
	\brac{2\sqrt{\omega}}^{|{\boldsymbol{0}}|_1} B_{\boldsymbol{0}}
	\	= \ 1.
	\end{align}
	With the definition \eqref{delta^{-1}:=}, this yields that  $\delta  \le 1$.
	On the other hand, by the definition of $h^{\bs - \be}_{\bell}$,
	\begin{align}\nonumber
		\sup_{\by \in B^{|\supp(\bs)|}_{4\omega}}	\left|h^{\bs - \be}_{\bell}
		\left( 
		\frac{\by}{2\sqrt{\omega}}\right)\right| 
		\ \le \
		1
	\end{align}
	From the last two  inequalities, \eqref{<delta} and the triangle inequality  we derive
	\eqref{phi^{bs - be;bk}_{bell}<} for $\by \in B^{|\supp(\bs)|}_{4\omega}$.
	
	By \eqref{phi^{bs - be;bk}_{bell}<} and Lemma \ref{l:Rm trun1} in Appendix,
	\begin{align*} 
	\norm{\phi^{\bs - \be}_{\bell} \brac{ \frac{\cdot}{2\sqrt{\omega}}}}{L_2(\RRm \setminus B^m_\omega,\gamma)}
	\le
	2	\norm{1}{L_2(\RRm \setminus B^m_\omega,\gamma)}
	\le	  
		C_1 m \exp \left(- K_1\omega \right).
\end{align*}	
	This together with  \eqref{Phi_Lambda(xi)=} implies that
	\begin{align*} 
	&\norm{\Phi_{\Lambda(\xi)} v}{L_2(\RRm \setminus B^m_\omega ,X,\gamma)}
	\\
	& 
	\leq
	\sum_{\bs \in \Lambda(\xi)} \sum_{\bs' \in \FF} 	\|v_{\bs'}\|_X\sum_{\be \in E_\bs} 
	\sum_{\bk \in \pi_{\bs - \be}} \abs{H_{\bs'}(\by_{\bs - \be;\bk})}
	\sum_{\bell=\boldsymbol{0}}^{\bs - \be}  \abs{b^{\bs - \be;\bk}_{\bell}} \brac{2\sqrt{\omega}}^{|\bell|_1}
	\norm{\phi^{\bs - \be}_{\bell} \brac{ \frac{\cdot}{2\sqrt{\omega}}}}{L_2(\RRm \setminus B^m_\omega,\gamma)}
	\\
	&\le
	C_1m \exp \left(- K_1\omega \right)	\sum_{\bs \in \Lambda(\xi)} \brac{2\sqrt{\omega}}^{|\bs|_1}\sum_{\bs' \in \FF} 	\|v_{\bs'}\|_X\sum_{\be \in E_\bs} 
	\sum_{\bk \in \pi_{\bs - \be}} \abs{H_{\bs'}(\by_{\bs - \be;\bk})}
	\sum_{\bell=\boldsymbol{0}}^{\bs - \be}  \abs{b^{\bs - \be;\bk}_{\bell}}. 
	\end{align*}
	By a  tensor product argument from Lemma \ref{lemma: sum |b_ell|}  in Appendix and the inequality 
	$\bs - \be \le \bs$ for $\be \in E_\bs$, we deduce  the estimates
	\begin{align} \label{sumb^{bs - be;bk}}
	\sum_{\bell=\boldsymbol{0}}^{\bs - \be}  \abs{b^{\bs - \be;\bk}_{\bell}}
	\le 
	e^{K_2|\bs|_1} \bs!
	\le 
	e^{K_2|\bs|_1} |\bs|_1^{|\bs|_1}, 
	\end{align}	
	which and \eqref{sum_{b in E,k in pi}} give
		\begin{align} \label{sum_{s in E}}
	\sum_{\bk \in \pi_{\bs - \be}} \abs{H_{\bs'}(\by_{\bs - \be;\bk})}
	\sum_{\bell=\boldsymbol{0}}^{\bs - \be}  \abs{b^{\bs - \be;\bk}_{\bell}}
	\le 
	\sum_{\bk \in \pi_{\bs - \be}} \abs{H_{\bs'}(\by_{\bs - \be;\bk})}	e^{K_2|\bs|_1} |\bs|_1^{|\bs|_1} 
	\le 
p_\bs(1)	e^{K_2|\bs|_1} |\bs|_1^{|\bs|_1}. 
	\end{align}	
This  in combining  with \eqref{m,omega}, \eqref{H_{bs'}<}, Lemma \ref{lemma[AbsConv]}  allows us to continue the estimation as
		\begin{equation} \label{norm-estimates}
		\begin{aligned}	
	\norm{\Phi_{\Lambda(\xi)} v}{L_2(\RRm \setminus B^m_\omega ,X,\gamma)}
	& \leq 
C_1m \exp \left(- K_1\omega \right)	\sum_{\bs' \in \FF} 	\|v_{\bs'}\|_X \sum_{\bs \in \Lambda(\xi)} \brac{2\sqrt{\omega}}^{|\bs|_1} p_\bs(1)	e^{K_2|\bs|_1} |\bs|_1^{|\bs|_1} 
	\\
	&\leq 
	C_2m \exp \left(- K_1\omega \right)	\sum_{\bs \in \Lambda(\xi)} \brac{2\sqrt{\omega}}^{|\bs|_1} p_\bs(1)	e^{K_2|\bs|_1} |\bs|_1^{|\bs|_1} 
		\\
	&\leq 
	C_2\xi \exp \left(- K_1 \xi \right) \brac{C_3 \xi^{1/2}}^{m_1(\xi)} e^{K_2{m_1(\xi)}}  \big[m_1(\xi)\big]^{m_1(\xi)}	
	\sum_{\bs \in \Lambda(\xi)} p_\bs(1).
	\end{aligned}
	\end{equation}
By the assumption of Theorem \ref{thm1-dnn} $\norm{\bp(\theta)\bsigma^{-1}}{\ell_q(\Ff)} \le C < \infty$ for some $\theta \ge 3/q$,  we derive that
$$
\norm{\bp(3/q)\bsigma^{-1}}{\ell_q(\Ff)} \le 
\norm{\bp(\theta)\bsigma^{-1}}{\ell_q(\Ff)} \le C < \infty.
$$
Applying Lemma \ref{lemma: |s|_1, |s|_0}(i) in Appendix  gives
	\begin{align*} 
	\sum_{\bs \in \Lambda(\xi)} p_\bs(1) \le \sum_{\bs \in \Lambda(\xi)} p_\bs(3) \le C. 
\end{align*}
 Hence by \eqref{norm-estimates} and Lemma \ref{lemma: |s|_1, |s|_0}(ii)  in Appendix we have that
	\begin{align*} 
	\norm{\Phi_{\Lambda(\xi)} v}{L_2(\RRm \setminus B^m_\omega ,X,\gamma)}
		&\leq 
	C_2\xi \exp \left(- K_1 \xi\right) \brac{C_3 \xi^{1/2}}^{K_{q,\theta} \xi^{1/\theta q}} e^{K_3K_{q,\theta} \xi^{1/\theta q}}  \big(K_{q,\theta} \xi^{1/\theta q}\big)^{K_{q,\theta} \xi^{1/\theta q}}	
C_4 \xi	
\\
&\leq 
C_5  \xi^2 \exp(- K_1 \xi + K_4\xi^{1/\theta q} \log \xi + K_5 \xi^{1/\theta q}). 
	\end{align*}
	Since $1/\theta q \le  1/3$, we obtain
	\begin{align*} 
	\norm{\Phi_{\Lambda(\xi)} v}{L_2(\RRm \setminus B^m_\omega ,X,\gamma)}
	& \leq
	 C \xi^{-(1/q -1/2)}.
		\end{align*}
	\hfill
\end{proof}

To complete the proof of Theorem \ref{thm1-dnn}, we have to establish the bounds of the size and depth of the deep ReLU neural network $\bphi_{\Lambda(\xi)}$ as in (iii) and (iv).

\begin{lemma}\label{lemma:size}
	Under the assumptions of Theorem \ref{thm1-dnn}, 
	the input  dimension of 
	$\bphi_{\Lambda(\xi)}$ is at most $\lfloor K_q \xi \rfloor$, for every $\xi > 1$, the  output dimension of 
	$\bphi_{\Lambda(\xi)}$ at most $\lfloor C_q \xi \rfloor$, 
	\begin{align}
	W\big(\bphi_{\Lambda(\xi)} \big)
	\le
	C  \xi^{1 + 2/\theta q} \log \xi,
	\label{W} 
	\end{align}
	and
	\begin{align}
	L\big(\bphi_{\Lambda(\xi)} \big)
	\le 
C \xi^{1/\theta q} (\log \xi)^2,
	\label{L}
	\end{align}
	where the constants $C$ are independent of $v$ and $\xi$.
\end{lemma}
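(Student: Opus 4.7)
The plan is to establish the four claims by careful bookkeeping of the nested parallelization that defines $\bphi_{\Lambda(\xi)}$ in \eqref{g_Lambda^*}. The input and output dimension bounds are immediate from the construction: $\bphi_{\Lambda(\xi)}$ acts on $\RRm$ with $m=\lfloor K_q\xi\rfloor$ by \eqref{m,omega}, and its output dimension equals $|G(\xi)|$, which is at most $\lfloor C_q\xi\rfloor$ by Lemma \ref{lemma:G(xi)<}. So only the size bound \eqref{W} and the depth bound \eqref{L} require genuine work.

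First I would estimate $\log\delta^{-1}$ directly from the defining formula \eqref{delta^{-1}:=}. Using $|\bs|_1\le m_1(\xi)\le K_{q,\theta}\xi^{1/\theta q}$ from Lemma \ref{lemma: |s|_1, |s|_0}(ii), the inequality $\omega\le K_{q,\theta}\xi$, the Stirling-type bound $B_\bs\le e^{K|\bs|_1}\bs!\le e^{K|\bs|_1}|\bs|_1^{|\bs|_1}$ supplied by Lemma \ref{lemma: sum |b_ell|}, and $|\Lambda(\xi)|\le|G(\xi)|\le C_q\xi$, every summand in \eqref{delta^{-1}:=} is of order $\exp(O(\xi^{1/\theta q}\log\xi))$ while the summation range is only polynomial in $\xi$, yielding $\log\delta^{-1}\le C\xi^{1/\theta q}\log\xi$. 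Plugging this into \eqref{W(phi_s,ell)}--\eqref{L(phi_s,ell)}, and using $|\bell|_1\le m_1(\xi)$ together with $\log|\bell|_1=O(\log\xi)$, gives the uniform per-component bounds $W(\phi^{\bs-\be;\bk}_\bell)\le C\xi^{2/\theta q}\log\xi$ and $L(\phi^{\bs-\be;\bk}_\bell)\le C\xi^{1/\theta q}(\log\xi)^2$. Since parallelization preserves depth as a maximum (Lemma \ref{lem:parallel}), the depth bound \eqref{L} follows at once.

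For the size bound, Lemma \ref{lem:parallel} gives
\begin{equation*}
W(\bphi_{\Lambda(\xi)})\le \sum_{(\bs,\be,\bk)\in G(\xi)}\sum_{\boldsymbol{0}\le\bell\le\bs-\be} W(\phi^{\bs-\be;\bk}_\bell) + O(|G(\xi)|),
\end{equation*}
so, in view of the uniform per-component estimate, it remains to show that the total number of elementary subnetworks is $O(\xi)$. I would reorganize
\begin{equation*}
\sum_{(\bs,\be,\bk)\in G(\xi)}\#\{\bell:\boldsymbol{0}\le\bell\le\bs-\be\}=\sum_{\bs\in\Lambda(\xi)}\sum_{\be\in E_\bs}|\pi_{\bs-\be}|^2\le \sum_{\bs\in\Lambda(\xi)}2^{|\bs|_0}p_\bs(2)\le \sum_{\bs\in\Lambda(\xi)}p_\bs(3),
\end{equation*}
using $|E_\bs|\le 2^{|\bs|_0}$, $|\pi_{\bs-\be}|^2\le p_\bs(2)$, and the elementary inequality $2(1+s)^2\le (1+s)^3$ for $s\ge 1$. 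Here I would invoke the hypothesis $\theta\ge 3/q$, which is exactly what makes $p_\bs(3)\le p_\bs(\theta q)=p_\bs(\theta)^q$; combined with $\sigma_\bs^q\le\xi$ on $\Lambda(\xi)$ and Assumption (I), this yields
\begin{equation*}
\sum_{\bs\in\Lambda(\xi)}p_\bs(3)\le \xi\sum_{\bs\in\Ff}p_\bs(\theta)^q\sigma_\bs^{-q}\le C\xi.
\end{equation*}
Multiplying by the per-component size gives $W(\bphi_{\Lambda(\xi)})\le C\xi\cdot\xi^{2/\theta q}\log\xi$, which is \eqref{W}. The hard part will be this last counting step: the choice $\theta\ge 3/q$ is precisely tuned so that the extra combinatorial cost $|E_\bs||\pi_{\bs-\be}|$ incurred beyond $|G(\xi)|$ by parallelizing one monomial approximant per pair $(\bk,\bell)$ is absorbed by the given weighted $\ell_q$-summability.
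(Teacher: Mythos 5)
Your proposal is correct and follows essentially the same route as the paper's proof: bound $\log\delta^{-1}$ by $C\xi^{1/\theta q}\log\xi$ via the estimates on $B_\bs$, $m_1(\xi)$ and $\omega$, deduce uniform per-component size and depth bounds from \eqref{W(phi_s,ell)}--\eqref{L(phi_s,ell)}, take the maximum for the depth, and sum for the size. Your hand-done counting $\sum_{\bs\in\Lambda(\xi)}2^{|\bs|_0}p_\bs(2)\le\sum_{\bs\in\Lambda(\xi)}p_\bs(3)\le\xi\,\norm{\bp(\theta)\bsigma^{-1}}{\ell_q(\Ff)}^q\le C\xi$ is exactly the content of Lemma \ref{lemma:G(xi)<} (applied in the paper with exponent $1$), so the identification of $\theta\ge 3/q$ as the tuned hypothesis is the same observation the paper relies on.
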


\begin{proof} 
	The input dimension of $\bphi_{\Lambda(\xi)}$ is not greater than $m(\xi)$ which is at most $\lfloor K_q \xi \rfloor$
	by Lemma~\ref{lemma: |s|_1, |s|_0}(iii) in Appendix.	
	The output dimension of $\bphi_{\Lambda(\xi)}$ is the number $|G(\xi)|$ which is at most $\lfloor C_q \xi \rfloor$
	by Lemma~\ref{lemma:G(xi)<} in Appendix.
	
	By  Lemmas \ref{lem:parallel}  and \ref{lem-product-varphi} and \eqref{W(phi_s,ell)} 
	the size of  $\bphi_{\Lambda(\xi)} $ is estimated as
	\begin{align} \label{W<}
	W\big(\bphi_{\Lambda(\xi)} \big)
	&\le
	\sum_{(\bs,\be,\bk) \in G(\xi)} W\brac{\phi_{\bs - \be;\bk}}
	\le 
	\sum_{\bs \in \Lambda(\xi)} \sum_{\be \in E_\bs}
	\sum_{\bk \in \pi_{\bs - \be}} 
	\sum_{\bell=\boldsymbol{0}}^{\bs - \be}  
	W\brac{\phi^{\bs - \be}_{\bell}}
	\\
	&\leq 
	C_1
\sum_{\bs \in \Lambda(\xi)} \sum_{\be \in E_\bs}
\sum_{\bk \in \pi_{\bs - \be}} 
\sum_{\bell=\boldsymbol{0}}^{\bs - \be}  
\brac{ 1 + \abs{\bs}_1 \log \delta^{-1}},
	\label{W<2}
	\end{align}	
where we recall,	
	\begin{align*}
\delta^{-1}
:= 
\xi^{1/q - 1/2} \sum_{\bs \in \Lambda(\xi)} e^{K_1|\bs|_1} p_\bs(2)
\brac{2\sqrt{\omega}}^{|\bs|_1} B_\bs, 
\end{align*}
	\begin{align*}
	B_\bs:=  
	\displaystyle
	\max_{\be \in E_\bs, \, \bk \in \pi_{\bs - \be}} \ \max_{\boldsymbol{0}\leq \bell\le \bs- \be} \big|b^{\bs - \be;\bk}_{\bell}\big|.
	\end{align*}
	From \eqref{sumb^{bs - be;bk}} it follows that
		\begin{align*}
			B_\bs
			\le 
			\max_{\be \in E_\bs, \, \bk \in \pi_{\bs - \be}}	 
	\sum_{\bell=\boldsymbol{0}}^{\bs - \be}  \abs{b^{\bs - \be;\bk}_{\bell}}
 \le \exp\brac{K_2 \xi^{1/\theta q} \log \xi},
	\end{align*}
	which by Lemma \ref{lemma: |s|_1, |s|_0}(i)	 in Appendix implies
		\begin{align*}
	\delta^{-1}
	&\le
	\xi^{1/q - 1/2}\exp\brac{K_2 \xi^{1/\theta q} \log \xi} \sum_{\bs \in \Lambda(\xi)}  p_\bs(2)
	\\&
	\le
C_2\xi^{1/q +1/2}\exp\brac{K_3 \xi^{1/\theta q} \log \xi} 
\le C_2\exp\brac{K_3 \xi^{1/\theta q} \log \xi}. 
	\end{align*}
Hence,
	\begin{align} \label{log delta}
	\log(\delta^{-1})
	&
	\le 
	K_4 \xi^{1/\theta q} \log \xi.
	\end{align}
and consequently,
	\begin{align*} 
\brac{ 1 + \abs{\bs}_1 \log \delta^{-1}}
&
\le 
\brac{ 1 + \abs{\bs}_1 K_4 \xi^{1/\theta q} \log \xi}
\le
C_2 \xi^{2/\theta q} \log \xi.
\end{align*}
From \eqref{W<}--\eqref{W<2} and Lemma \ref{lemma:G(xi)<} in Appendix we obtain the desired bound of the size of $\bphi_{\Lambda(\xi)}$:
	\begin{align*}
W\big(\bphi_{\Lambda(\xi)} \big)
&\leq 
C_2  \xi^{2/\theta q} \log \xi
\sum_{\bs \in \Lambda(\xi)} \sum_{\be \in E_\bs}
\sum_{\bk \in \pi_{\bs - \be}} 
\sum_{\bell=\boldsymbol{0}}^{\bs - \be}  1
\\
&\le 
C_2  \xi^{2/\theta q} \log \xi
\sum_{ (\bs,\be,\bk) \in G(\xi)}p_{\bs}(1)
\le 
C_3  \xi^{1 + 2/\theta q} \log \xi.
\end{align*}	

	By  using Lemma \ref{lem:parallel}, \eqref{L(phi_s,ell)}, \eqref{log delta} and  Lemma \ref{lemma: |s|_1, |s|_0}(ii) in Appendix, we prove that  the depth of  $\bphi_{\Lambda(\xi)} $ is bounded as in \eqref{L}:
\begin{align} 
L\brac{\bphi_{\Lambda(\xi)}}
&\le
\max_{(\bs,\be,\bk) \in G(\xi)} L\brac{\phi_{\bs - \be;\bk}}
\le 
\max_{(\bs,\be,\bk) \in G(\xi)}
\max_{\boldsymbol{0}\le \bell \le \bs - \be}  
L\brac{\phi^{\bs - \be}_{\bell}}
\notag
\\
&\leq 
C_4
\max_{(\bs,\be,\bk) \in G(\xi)}
\max_{\boldsymbol{0}\le \bell \le \bs - \be}   
\brac{ 1 + \log	\abs{\bs}_1   \log \delta^{-1}}
\notag
\\
&\leq 
C_4
\max_{\bs \in \Lambda(\xi)}  
\brac{ 1 + \log	\abs{\bs}_1   \log \delta^{-1}}
\notag
\\
&\leq 
C_4
\max_{\bs \in \Lambda(\xi)}  
\brac{ 1 + \log	 \brac{K_{q,\theta} \xi^{1/\theta q}} \,  	\brac{K_5 \xi^{1/\theta q} \log \xi}}
\le 
C_5 \xi^{1/\theta q} (\log \xi)^2.
\notag
\end{align}	
	\hfill
\end{proof}

We are now in a position to give a formal proof of Theorem \ref{thm1-dnn}.

\begin{proof} [Proofs of Theorem \ref{thm1-dnn}] \  From \eqref{intermediate-approx}, Theorem \ref{lemma:coll-approx} and Lemmata \ref{lemma:S-S^omega} -- \ref{lemma:G_Lambda}, for every $\xi > 2$,  we deduce that
\begin{align*} \label{v-G}
\norm{v - \Phi_{\Lambda(\xi)} v}{L_2(\RRi,X,\gamma)}
\leq 
C
\xi^{- (1/q - 1/2)}.
\end{align*}
The claim (vi) is proven.  The claim (i) follows directly from the construction of the deep ReLU neural network $\bphi_{\Lambda(\xi)}$ and the sequence of points $Y_{\Lambda(\xi)}$, the claim (ii) from Lemma \ref{lemma:G(xi)<}, the claims (iii)--(iv)  from Lemma \ref{lemma:size}  and  the claim (v) from Lemma \ref{lemma: |s|_1, |s|_0}(ii)  in Appendix and \eqref{supp(g)}. Thus, Theorem \ref{thm1-dnn} is proven for the case when $U=\RRi$. 

The case $U= \RRM$ can be proven in the same  way with a slight modification.  Counterparts of all  definitions, formulas  and assertions which have been used in the proof of the case $U= \RRi$, are true  for the case $U= \RRM$. In the proof of this case, in parlicular, the used equality $\norm{H_\bs}{L_2(\RRi)} = 1$, $\bs \in \FF$, is replaced by the inequality $\norm{H_\bs}{L_\infty^{\sqrt{g}}(\RRM)} < 1$, $\bs \in \NN_0^M$.
\hfill
\end{proof}

\section{Application to parametrized elliptic PDEs}
\label {lognormal inputs}

In this section, we apply the results in the previous section to the deep ReLU neural network approximation of  the solution $u$ to the parametrized elliptic PDEs \eqref{p-ellip} with lognormal inputs \eqref{lognormal}.
This is  based on the weighted $\ell_2$-summability of  the series $(\|u_\bs\|_V)_{\bs \in \Ff}$ in  following lemma which has been proven in \cite[Theorems 3.3 and 4.2]{BCDM17}.

\begin{lemma}\label {lemma[ell_2summability]}
	Assume that  there exist a number 
	$0<q<\infty$ and an increasing sequence $\brho =(\rho_{j}) 
	_{j \in \Nn}$ of   numbers strictly larger than 1 
	such that $\norm{\brho^{-1}}{\ell_q(\Nn)} \le C < \infty$  and
	\begin{equation} \nonumber
	\left\| \sum _{j \in \Nn} \rho_j |\psi_j| \right\|_{L_\infty(D)}
	\le C <\infty,
	\end{equation} 
	where the constants $C$ are independent of $J$.
	Then we have that for any $\eta \in \Nn$,
	\begin{equation} \label{sigma_s}
	\sum_{\bs\in\Ff} (\sigma_{\bs} \|u_\bs\|_V)^2 \le C < \infty \quad \text{with} \quad 
	\sigma_{\bs}^2:=\sum_{\|\bs'\|_{\ell_\infty(\Ff)}\leq \eta}{\bs\choose \bs'} \prod_{j \in \Nn}\rho_j^{2s_j'},
	\end{equation}
		where the constant $C$ is independent of $J$.
\end{lemma}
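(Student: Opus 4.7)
My plan is to reduce the weighted $\ell_2$-summability of the Hermite coefficients to a weighted $L_2(\gamma;V)$-bound on mixed partial derivatives of $u$ with respect to the parameter $\by$, and then to exploit the parametric structure of \eqref{p-ellip} to control those derivatives uniformly in the truncation $J$.

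The first step is an algebraic reduction via Parseval's identity. Since the normalized Hermite polynomials satisfy $H_n'(y) = \sqrt{n}\,H_{n-1}(y)$, term-by-term differentiation of the expansion $u(\by) = \sum_{\bs}u_{\bs}H_{\bs}(\by)$ gives $\partial^{\bs'}u = \sum_{\bs\ge\bs'}\sqrt{\bs!/(\bs-\bs')!}\,u_{\bs}\,H_{\bs-\bs'}$, hence
\[
\int_U \|\partial^{\bs'}u\|_V^2\,d\gamma \;=\; \bs'!\sum_{\bs\ge\bs'}\binom{\bs}{\bs'}\|u_{\bs}\|_V^2.
\]
Interchanging summations in the target quantity yields the identity
\[
\sum_{\bs\in\Ff}\sigma_{\bs}^2\|u_{\bs}\|_V^2 \;=\; \sum_{\|\bs'\|_{\infty}\le\eta}\frac{\brho^{2\bs'}}{\bs'!}\int_U \|\partial^{\bs'}u\|_V^2\,d\gamma,
\]
so it is enough to bound the right-hand side uniformly in $J$.

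The second step is a PDE-based recursion. Because $a(\by)=\exp(b(\by))$ with $b$ affine in $\by$, one has $\partial^{\bm}a = a\prod_{j}\psi_j^{m_j}$, and applying Leibniz's rule to the weak form of \eqref{p-ellip}, testing with $\partial^{\bs'}u\in V$, and using Cauchy--Schwarz together with the $\by$-wise ellipticity $a(\by,x)\ge a_{\min}(\by)>0$ gives, for $\bs'\ne\boldsymbol{0}$,
\[
\bigl\|a(\by)^{1/2}\nabla\partial^{\bs'}u(\by)\bigr\|_{L_2(D)} \;\le\; \sum_{\boldsymbol{0}\ne\bm\le\bs'}\binom{\bs'}{\bm}\Bigl\|\prod_{j}\psi_j^{m_j}\Bigr\|_{L_\infty(D)}\bigl\|a(\by)^{1/2}\nabla\partial^{\bs'-\bm}u(\by)\bigr\|_{L_2(D)},
\]
together with the base estimate $\|a(\by)^{1/2}\nabla u(\by)\|_{L_2(D)}\le a_{\min}(\by)^{-1/2}\|f\|_{V^*}$. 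Setting $\Psi_j:=\|\psi_j\|_{L_\infty(D)}$ and multiplying the recursion through by $\brho^{\bs'}/\bs'!$ turns it into a convolution-type inequality in $\bs'$ whose driving sequence has $\ell_1$-norm controlled by the hypothesis $\sum_j\rho_j\Psi_j\le\|\sum_j\rho_j|\psi_j|\|_{L_\infty}<\infty$; iterating yields a pointwise bound on $\|\partial^{\bs'}u(\by)\|_V$ that, after integration against $d\gamma$ using Fernique's theorem (valid because $b(\by)$ is centered Gaussian with $\|\sum_j\rho_j|\psi_j|\|_{L_\infty}<\infty$ implying $\int_U a_{\min}(\by)^{-p}\,d\gamma<\infty$ for every $p>0$), produces the desired uniform-in-$J$ estimate.

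The main obstacle is the weighted iteration in the second step. Naively summing the pointwise recursion over unbounded $\bs'$ produces a factor $\sum_{\bm\ne\boldsymbol{0}}(\brho\Psi)^{\bm}/\bm!=e^{\sum_j\rho_j\Psi_j}-1$, which is bounded but not less than one, so the recursion need not close without further structure. This is precisely what the $\ell_\infty$-truncation $\|\bs'\|_\infty\le\eta$ in the definition of $\sigma_\bs$ provides: it caps each coordinate-factorial $s_j'!$ by $\eta!$ and allows the final weighted sum to factorize as $\prod_{j\in\Nn}\bigl(1+\sum_{s=1}^{\eta}s!(\rho_j\Psi_j)^{2s}\bigr)$, which is uniformly bounded because $\rho_j\Psi_j\to 0$ summably. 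Carrying out the bookkeeping of the binomial coefficients so that the sharp factorial factor emerges is the technical heart of the BCDM17 argument.
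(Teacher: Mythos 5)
The paper does not actually prove this lemma --- it cites \cite[Theorems 3.3 and 4.2]{BCDM17} --- so your proposal is reconstructing the cited argument. Your first step is correct and is exactly BCDM17's Theorem~3.3: from $\partial^{\bs'}H_\bs=\sqrt{\bs!/(\bs-\bs')!}\,H_{\bs-\bs'}$ and Parseval one gets $\int_U\|\partial^{\bs'}u\|_V^2\,d\gamma=\bs'!\sum_{\bs\ge\bs'}\binom{\bs}{\bs'}\|u_\bs\|_V^2$, and interchanging sums turns $\sum_\bs(\sigma_\bs\|u_\bs\|_V)^2$ into $\sum_{\|\bs'\|_\infty\le\eta}\frac{\brho^{2\bs'}}{\bs'!}\|\partial^{\bs'}u\|^2_{L_2(U,V,\gamma)}$. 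The PDE recursion in your second step (Leibniz on the weak form, test with $\partial^{\bs'}u$, Cauchy--Schwarz, plus Fernique for the base case) is also the right mechanism and matches Theorem~4.2 of \cite{BCDM17}.

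The gap is precisely where you locate "the technical heart": closing the weighted recursion. After multiplying by $\brho^{\bs}/\sqrt{\bs!}$ the recursion reads $\sqrt{d_\bs}\le\sum_{\boldsymbol{0}\ne\bm\le\bs}\sqrt{\binom{\bs}{\bm}}\,\frac{(\brho\|\psi\|)^{\bm}}{\sqrt{\bm!}}\sqrt{d_{\bs-\bm}}$ with $d_\bs:=\frac{\brho^{2\bs}}{\bs!}\|a^{1/2}\nabla\partial^{\bs}u\|^2_{L_2(D)}$, and the restriction $\|\bs\|_\infty\le\eta$ does \emph{not} rescue the argument the way you claim --- it makes the kernel mass \emph{larger}, since $\binom{s_j}{m_j}\le\eta^{m_j}/m_j!$ turns the driving sum into $e^{\sqrt{\eta}\,\|\sum_j\rho_j|\psi_j|\|_{L_\infty}}-1$. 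Your proposed factorization $\prod_{j}\bigl(1+\sum_{s=1}^{\eta}s!(\rho_j\|\psi_j\|_{L_\infty(D)})^{2s}\bigr)$ does not follow from the recursion: iterating it produces multinomial factors $|\bs|_1!/\bs!$ that couple the coordinates and destroy any product structure, and indeed for $\eta=1$ the iterated bound behaves like $\sum_k k!\,\bigl(\sum_j(\rho_j\|\psi_j\|_{L_\infty})^2\bigr)^k$, which diverges. The actual argument in \cite{BCDM17} is a self-bounding estimate: summing $d_\bs\le\bigl(\sum_{\bm}b_{\bs,\bm}\bigr)\bigl(\sum_{\bm}b_{\bs,\bm}d_{\bs-\bm}\bigr)$ over $\|\bs\|_\infty\le\eta$ gives $S\le d_{\boldsymbol 0}+\kappa^2S$ with $\kappa=e^{\sqrt{\eta}\,\|\sum_j\rho_j|\psi_j|\|_{L_\infty}}-1$, which closes only under the smallness condition $\|\sum_j\rho_j|\psi_j|\|_{L_\infty}<\ln 2/\sqrt{\eta}$; the general case is then reached by rescaling $\brho\mapsto c\brho$ (which preserves $\norm{\brho^{-1}}{\ell_q}<\infty$ and is harmless for the way the lemma is used in Lemma~\ref{lemma[bcdm]}, since only the resulting weight sequence matters). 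Without either the smallness hypothesis or this rescaling step, your recursion does not close, so the proof as written is incomplete at its central point. The remaining technical items you gloss over (justifying term-by-term Hermite differentiation, i.e.\ the rigorous form of the Parseval identity via finite-dimensional truncation, and the Fernique integrability of $a_{\min}(\by)^{-1}$) are standard but should be cited or carried out.
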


The following two lemmata are proven in \cite[Lemmata 5.2 and 5.3]{Dung21}.
\begin{lemma} \label{lemma:measurable}
	Let the assumptions of Lemma~\ref {lemma[ell_2summability]} hold.  Then the solution map 
	$\by \mapsto u(\by)$ is $\gamma$-measurable and $u \in L_2(U,V,\gamma)$.	 Moreover, $u \in L_2^\Ee(U,V,\gamma)$ where
	\begin{equation} \label{Ee}
	\Ee
	:= \
	\left\{\by \in \RRi: \  \sup_{j \in \NN} \rho_j^{-1} |y_j| < \infty \right\}
	\end{equation}
	having $\gamma(\Ee) =1$ and  containing  all $\by \in \RRi$ with $|\by|_0 < \infty$ in the case when $U=\RRi$.
\end{lemma}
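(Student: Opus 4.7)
The plan has three parts: (i) show $\gamma(\Ee)=1$, which together with the trivial observation that any $\by\in\RRi$ with $|\by|_0<\infty$ automatically satisfies $\sup_j\rho_j^{-1}|y_j|<\infty$ yields the structural claims on $\Ee$; (ii) show that $u(\by)$ is a well-defined element of $V$ for every $\by\in\Ee$ and that the resulting map $\by\mapsto u(\by)$ is $\gamma$-measurable; (iii) establish the $L_2$-integrability bound. For (i) I would use the Borel--Cantelli lemma: since $\brho^{-1}\in\ell_q(\NN)$ we have $\rho_j\to\infty$, and for any fixed $r>0$ the standard Gaussian tail bound gives $\gamma(\{|y_j|>r\rho_j\})\le 2\exp(-r^2\rho_j^2/2)$, which is summable in $j$. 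Hence only finitely many of the events $\{\rho_j^{-1}|y_j|>r\}$ occur almost surely, so $\sup_j\rho_j^{-1}|y_j|<\infty$ a.s.

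For (ii) the key pointwise estimate for $\by\in\Ee$ is
$$
\|b(\by)\|_{L_\infty(D)} \le \Bigl(\sup_{j\in\NN}\rho_j^{-1}|y_j|\Bigr)\,\Bigl\|\sum_{j\in\NN}\rho_j|\psi_j|\Bigr\|_{L_\infty(D)} <\infty,
$$
which yields $\exp(-\|b(\by)\|_\infty)\le a(\by)\le\exp(\|b(\by)\|_\infty)$, so the Lax--Milgram lemma produces $u(\by)\in V$ satisfying the a priori bound $\|u(\by)\|_V \le \|f\|_{H^{-1}(D)}\exp(\|b(\by)\|_{L_\infty(D)})$. Measurability I would obtain by truncation: write $\by_M=(y_1,\ldots,y_M,0,0,\ldots)$; then the map $(y_1,\ldots,y_M)\mapsto u(\by_M)$ is continuous, since $(y_1,\ldots,y_M)\mapsto a(\by_M)$ is continuous into $L_\infty(D)$ and the solution operator depends continuously on the coefficient. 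For $\by\in\Ee$ the tail estimate
$$
\|b(\by)-b(\by_M)\|_{L_\infty(D)} \le \Bigl(\sup_j\rho_j^{-1}|y_j|\Bigr)\sum_{j>M}\rho_j\|\psi_j\|_{L_\infty(D)} \longrightarrow 0
$$
forces $a(\by_M)\to a(\by)$ in $L_\infty(D)$ and hence $u(\by_M)\to u(\by)$ in $V$. Thus $u|_\Ee$ is a pointwise limit of cylindrical, and therefore $\gamma$-measurable, $V$-valued maps, so $u$ is $\gamma$-measurable after extending by zero on $\Ee^c$.

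For (iii) I would combine the a priori bound with Fernique's theorem applied to the centered Gaussian $L_\infty(D)$-valued random element $b=\sum_j y_j\psi_j$ (which is well-defined by the argument in (i)): Fernique gives $\EE\exp(\alpha\|b\|_{L_\infty(D)}^2)<\infty$ for some $\alpha>0$, in particular $\EE\exp(2\|b\|_{L_\infty(D)})<\infty$, and hence
$$
\int_U \|u(\by)\|_V^2\,\rd\gamma(\by) \le \|f\|_{H^{-1}(D)}^2 \int_U \exp(2\|b(\by)\|_{L_\infty(D)})\,\rd\gamma(\by) <\infty.
$$

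The main technical obstacle is (ii): one must carefully reconcile the Bochner measurability of $u$ on the full-measure set $\Ee$ with the pointwise limit of finite-dimensional continuous slices, and also ensure that all constants remain independent of the dimension parameter $J$ (which they do, since the estimates depend only on $\|\sum_j\rho_j|\psi_j|\|_{L_\infty(D)}$ and $\|\brho^{-1}\|_{\ell_q(\Nn)}$, both bounded uniformly in $J$ by hypothesis). As a cleaner alternative to Fernique for (iii), once measurability is established one could instead invoke Parseval together with \eqref{sigma_s} and $\sigma_\bs\ge 1$ to conclude $\sum_\bs\|u_\bs\|_V^2<\infty$; but this route presupposes that the Hermite expansion is already meaningful, i.e.\ that $u$ is already known to belong to $L_2(U,V,\gamma)$, making the Fernique route more self-contained.
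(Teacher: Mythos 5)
The paper itself gives no proof of this lemma --- it is quoted from \cite[Lemmas 5.2 and 5.3]{Dung21} --- so your argument has to stand on its own. Part (i) (Borel--Cantelli, giving $\gamma(\Ee)=1$ and the inclusion of all finitely supported $\by$) is correct, and the overall plan for (ii)--(iii) is the standard one. But there is a genuine gap in (ii): you deduce $a(\by_M)\to a(\by)$ in $L_\infty(D)$ from the tail bound $\|b(\by)-b(\by_M)\|_{L_\infty(D)}\le\bigl(\sup_j\rho_j^{-1}|y_j|\bigr)\sum_{j>M}\rho_j\|\psi_j\|_{L_\infty(D)}$ and the claim that the right-hand side tends to $0$. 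The hypothesis of Lemma~\ref{lemma[ell_2summability]} controls the norm of the sum, $\bigl\|\sum_j\rho_j|\psi_j|\bigr\|_{L_\infty(D)}$, not the sum of the norms $\sum_j\rho_j\|\psi_j\|_{L_\infty(D)}$; the latter can be infinite when the former is finite (take $\psi_j=\rho_j^{-1}\chi_{A_j}$ with pairwise disjoint $A_j\subset D$: the norm of the sum is $1$, the sum of norms diverges, and the tail $\bigl\|\sum_{j>M}\rho_j|\psi_j|\bigr\|_{L_\infty(D)}$ stays equal to $1$ for all $M$). So $b(\by_M)$ need not converge to $b(\by)$ in $L_\infty(D)$ at all. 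The repair is to use only almost-everywhere convergence on $D$: for $\by\in\Ee$ the dominating series gives $b(\by_M)(x)\to b(\by)(x)$ for a.e.\ $x$ together with the $M$-uniform two-sided bound $e^{-B(\by)}\le a(\by_M)\le e^{B(\by)}$; then the identity $\int_D a(\by_M)\nabla\bigl(u(\by_M)-u(\by)\bigr)\cdot\nabla v\,\rd\bx=\int_D\bigl(a(\by)-a(\by_M)\bigr)\nabla u(\by)\cdot\nabla v\,\rd\bx$ with $v=u(\by_M)-u(\by)$, combined with dominated convergence applied to $\bigl(a(\by)-a(\by_M)\bigr)\nabla u(\by)$ in $L_2(D)$, yields $u(\by_M)\to u(\by)$ in $V$, which is exactly what your pointwise-limit measurability argument requires.

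The same issue touches (iii): since the series $\sum_j y_j\psi_j$ need not converge in $L_\infty(D)$ and $L_\infty(D)$ is non-separable, $b$ is not in any evident sense a Radon Gaussian random element of $L_\infty(D)$, so the Banach-space form of Fernique's theorem does not apply off the shelf. You should instead either invoke the version of Fernique's theorem for $\gamma$-measurable seminorms, applied to $\by\mapsto\|b(\by)\|_{L_\infty(D)}$ (finite on the full-measure set $\Ee$ by your estimate in (ii)), or simply quote the direct integrability bound $\int_U\exp\bigl(k\|b(\by)\|_{L_\infty(D)}\bigr)\,\rd\gamma(\by)<\infty$ proved under precisely these hypotheses in \cite{BCDM17}. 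With that in hand, your a priori bound $\|u(\by)\|_V\le\|f\|_{H^{-1}(D)}\exp\bigl(\|b(\by)\|_{L_\infty(D)}\bigr)$ does give $u\in L_2(U,V,\gamma)$, and the argument closes.
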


\begin{lemma} \label{lemma[bcdm]}
	Let $0 < q <\infty$, 
	$\brho=(\rho_j) _{j \in \Nn}$ be a sequence of  positive 
	numbers such that $\norm{\brho^{-1}}{\ell_q(\Nn)} \le C < \infty$, where the constant $C$ is independent of $J$. Let $\theta$ be an 
	arbitrary nonnegative number and  $\bp(\theta)=(p_\bs(\theta))_{\bs \in \Ff}$ the set given as in \eqref{[p_s]}. For 
	$\eta \in \NN$,  let the set $\bsigma=(\sigma_\bs)_{\bs \in \Ff}$ be 
	defined as in \eqref{sigma_s}.
	Then for any  $\eta > \frac{2(\theta + 1)}{q}$, we have
	\begin{equation} \nonumber
	\norm{\bp(\theta)\bsigma^{-1}}{\ell_q(\Ff)} \le C < \infty, 
	\end{equation}
	where the constant $C$ is independent of $J$.
\end{lemma}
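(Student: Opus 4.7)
The plan is to exploit the product structure of both $p_\bs(\theta)$ and $\sigma_\bs$ so that the $\ell_q$-sum over the unbounded index set $\Ff$ factors into an infinite product of univariate series indexed by $j \in \Nn$, which can then be estimated one factor at a time using the decay of $\rho_j^{-1}$.

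The first step is to verify that, because $\binom{\bs}{\bs'} = \prod_j \binom{s_j}{s'_j}$ and because the constraint $\|\bs'\|_{\ell_\infty(\Ff)} \le \eta$ decouples coordinatewise (together with the implicit $\bs' \le \bs$ hidden in the nonvanishing of $\binom{\bs}{\bs'}$), the defining sum in \eqref{sigma_s} factorizes as
\[
\sigma_\bs^2 \;=\; \prod_{j \in \Nn} T_\eta(s_j,\rho_j), \qquad T_\eta(n,r) \;:=\; \sum_{k=0}^{\min(n,\eta)} \binom{n}{k}\, r^{2k}.
\]
Since $T_\eta(0,r)=1$, only the finitely many $j \in \supp(\bs)$ contribute. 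Combining this with $p_\bs(\theta)^q = \prod_j (1+s_j)^{\theta q}$ (and $(1+s_j)^{\theta q}=1$ for $s_j=0$), the elementary identity $\sum_{\bs\in\Ff}\prod_j b_j(s_j) = \prod_j \sum_{n\ge 0} b_j(n)$ for non-negative $b_j$ with $b_j(0)=1$ yields
\[
\norm{\bp(\theta)\bsigma^{-1}}{\ell_q(\Ff)}^q \;=\; \prod_{j \in \Nn} F_j, \qquad F_j \;:=\; \sum_{n=0}^{\infty} \frac{(1+n)^{\theta q}}{T_\eta(n,\rho_j)^{q/2}}.
\]
Writing $F_j = 1 + a_j$, it is enough to show that $\sum_j a_j$ is bounded uniformly in $J$, and by the hypothesis $\norm{\brho^{-1}}{\ell_q(\Nn)}\le C$ it suffices to prove a pointwise bound $a_j \le C' \rho_j^{-q}$ with $C'$ depending only on $\eta,\theta,q$.

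Next I would estimate $a_j$ by splitting the sum at $n=\eta$. In the head range $1 \le n \le \eta$ one has $T_\eta(n,\rho_j) = (1+\rho_j^2)^n$ exactly, and summing the resulting geometric-type contribution yields a bound of order $\rho_j^{-q}$ with constants depending on $\eta,\theta,q$ (provided $\rho_j$ is bounded away from $1$, which holds for all but finitely many $j$ since $\brho^{-1}\in\ell_q(\Nn)$). For the tail $n>\eta$ I would use the single-term lower bound $T_\eta(n,\rho_j) \ge \binom{n}{\eta}\rho_j^{2\eta}$ together with the estimate $\binom{n}{\eta} \ge c_\eta\, n^\eta$ for $n \ge 2\eta$, obtaining
\[
\sum_{n>\eta} \frac{(1+n)^{\theta q}}{T_\eta(n,\rho_j)^{q/2}} \;\le\; C_\eta\, \rho_j^{-\eta q} \sum_{n>\eta} n^{\theta q - \eta q/2}.
\]
The condition $\eta > 2(\theta+1)/q$ is exactly what makes the remaining series convergent, and the factor $\rho_j^{-\eta q} \le \rho_j^{-q}$ (valid for $\rho_j \ge 1$ and $\eta \ge 1$) then delivers the desired $\rho_j^{-q}$ bound on the tail.

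Combining head and tail gives $a_j \le C_{\eta,\theta,q}\rho_j^{-q}$ for all but finitely many $j$, so $\sum_j a_j \le C_{\eta,\theta,q}\norm{\brho^{-1}}{\ell_q(\Nn)}^q$ uniformly in $J$, and therefore $\prod_j F_j$ is bounded by a constant independent of $J$, which is the claim. The main technical point I expect is the tail estimate: the polynomial lower bound on $\binom{n}{\eta}$ must absorb both the algebraic factor $(1+n)^{\theta q}$ and the summation over $n$, and it is precisely at this step that the threshold on $\eta$ enters. Uniformity in $J$ is then automatic because all constants depend only on $q$, $\theta$, $\eta$, and $\norm{\brho^{-1}}{\ell_q(\Nn)}$.
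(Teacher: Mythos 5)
Your overall strategy---factorizing $\sigma_\bs^2=\prod_{j\in\Nn} T_\eta(s_j,\rho_j)$ with $T_\eta(n,r)=\sum_{k=0}^{\min(n,\eta)}\binom{n}{k}r^{2k}$, converting the $\ell_q$-quasi-norm into the product $\prod_j F_j$ via $\sum_{\bs\in\Ff}\prod_j b_j(s_j)=\prod_j\sum_{n\ge0}b_j(n)$, and reducing everything to a pointwise bound $F_j-1\le C'\rho_j^{-q}$---is the standard route; note that the paper itself gives no proof of this lemma but cites \cite[Lemma 5.3]{Dung21}, whose argument is of precisely this type. The factorization, the head computation $T_\eta(n,r)=(1+r^2)^n$ for $n\le\eta$, and the tail lower bound $T_\eta(n,r)\ge\binom{n}{\eta}r^{2\eta}\ge c_\eta n^\eta r^{2\eta}$ are all correct, and the uniformity in $J$ is indeed automatic (in particular $\rho_j\ge \norm{\brho^{-1}}{\ell_q(\Nn)}^{-1}$ for every $j$, and $(1+\rho_j^2)^{-q/2}\le\rho_j^{-q}$ always, so your caveat about $\rho_j$ being bounded away from $1$ is unnecessary).

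The genuine gap is at the decisive last step. The series $\sum_{n>\eta}n^{\theta q-\eta q/2}$ converges if and only if $\theta q-\eta q/2<-1$, i.e.
\begin{equation*}
\eta \;>\; 2\theta+\frac{2}{q} \;=\; 2\Bigl(\theta+\frac{1}{q}\Bigr),
\end{equation*}
which is \emph{not} the stated hypothesis $\eta>\frac{2(\theta+1)}{q}=\frac{2\theta+2}{q}$; the two thresholds coincide only when $\theta=0$ or $q=1$. For $q\le 1$ the stated hypothesis is the stronger one and your proof closes. For $q>1$ and $\theta>0$ it is strictly weaker, and your own two-sided asymptotics $T_\eta(n,r)\asymp n^{\eta}r^{2\eta}$ (for $n$ large) show that $F_j=\infty$ already for a single active coordinate whenever $\frac{2(\theta+1)}{q}<\eta\le 2\theta+\frac{2}{q}$: e.g.\ $q=2$, $\theta=1$, $\eta=3$ gives $\sum_{n}(1+n)^{2}\sigma_{n}^{-2}\gtrsim\sum_n n^{-1}=\infty$, while $3>2(\theta+1)/q=2$. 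So the sentence ``the condition $\eta>2(\theta+1)/q$ is exactly what makes the remaining series convergent'' is incorrect, and the lemma as printed cannot be proved for $q>1$, $\theta>0$; the threshold should presumably read $\eta>2(\theta+1/q)$. This does no damage to the paper's applications, where $\eta$ is simply chosen large enough, but your write-up must either restrict to $q\le1$ or state and use the corrected threshold at the tail estimate.
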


We are now in position to formulate our main results on collocation deep ReLU neural network approximation of the solution $u$ to parametric  elliptic PDEs with lognormal inputs. 

\begin{theorem}\label{thm-PDE-lognormal-dnn}
	Under the assumptions of Lemma 
	\ref{lemma[ell_2summability]},  let $0 < q < 2$. Then, given
	an arbitrary number $\delta  > 0$,  
	for every integer $n > 2$, we can  construct  a deep ReLU neural network 
	$\bphi_{\Lambda(\xi_n)}:= (\phi_{\bs - \be;\bk})_{(\bs,\be,\bk) \in G(\xi_n)}$ of the size
	$W\big(\bphi_{\Lambda(\xi_n)}\big) \le n$ 
	on  $\RR^m$ with 
	\begin{equation} \nonumber
	m
	:=
	\begin{cases}
	\min \left\{M,\left\lfloor K \brac{\frac{n}{\log n}}^{\frac{1}{1+\delta}} \right\rfloor \right\}  \ \ &{\rm if} \ \ U=\RRM, \\
	\left\lfloor K \brac{\frac{n}{\log n}}^{\frac{1}{1+\delta}} \right\rfloor \ \ &{\rm if} \ \ U=\RRi,
	\end{cases}
	\end{equation}	
and a sequence of points  $Y_{\Lambda(\xi_n)}:= (\by_{\bs - \be;\bk})_{(\bs,\be,\bk) \in G(\xi_n)}$	having the following properties. 
	\begin{itemize}
		\item [{\rm (i)}]
		The deep ReLU neural network $\bphi_{\Lambda(\xi_n)}$ and sequence of points  $Y_{\Lambda(\xi_n)}$ are  independent of $u$;
		\item [{\rm (ii)}] 
		The output dimension of $\bphi_{\Lambda(\xi_n)}$ is at most
		 $\left\lfloor K \brac{\frac{n}{\log n}}^{\frac{1}{1+\delta}} \right\rfloor $;	
		\item [{\rm (iii)}] 
		$L\big(\bphi_{\Lambda(\xi_n)}\big)\le C_\delta \left(\frac{n}{\log n}\right)^{\frac{\delta}{2(1 + \delta)}}\brac{\log n}^2$;
		\item [{\rm (iv)}] 
		The  components $\phi_{\bs - \be;\bk}$, $(\bs,\be,\bk) \in G(\xi_n)$, of $\bphi_{\Lambda(\xi_n)}$ are deep ReLU neural networks on 
		$\RR^{m_\bs}$ with $m_\bs  \le C_\delta n^\delta$, having support contained in the super-cube $[-T,T]^{m_\bs}$, where
		  $T:= C_\delta  \brac{\frac{n}{\log n}}^{\frac{1}{2(1+\delta)}} $;
		\item [{\rm (v)}]
		The approximation of $u$ by  $\Phi_{\Lambda(\xi_n)}u$ defined as in \eqref{Phi_v},  gives the error estimate 
		$$
		\| u- \Phi_{\Lambda(\xi_n)} u \|_{\Ll(U,V)} \leq C \left(\frac{n}{\log n}\right)^{- \frac{1}{1 + \delta}\brac{\frac{1}{q} - \frac{1}{2}}}.
		$$
	\end{itemize}
	Here the constants $C$, $K$ and  $C_\delta$ are independent of $J$, $u$ and $n$.	
\end{theorem}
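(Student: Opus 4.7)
The plan is to apply Theorem \ref{thm1-dnn} to the solution $u$ and calibrate its continuous parameter $\xi$ against the integer size budget $n$.

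First I would verify the hypotheses of Theorem \ref{thm1-dnn} for $u$: Lemma \ref{lemma:measurable} places $u$ in $L_2^{\Ee}(U,V,\gamma)$, and Lemma \ref{lemma[ell_2summability]} produces, for every integer $\eta$, the explicit weight sequence $\bsigma = (\sigma_{\bs})_{\bs \in \Ff}$ of \eqref{sigma_s} certifying the $\ell_2$-bound in Assumption (I). This $\bsigma$ is manifestly increasing in $\bs$, and the coordinatewise monotonicity $\sigma_{\be^{i'}} \le \sigma_{\be^i}$ for $i' < i$ may be arranged by renumbering $\brho$ in nondecreasing order, a permutation that leaves $\norm{\brho^{-1}}{\ell_q(\Nn)}$ unchanged.

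Next, fix $\delta > 0$ small enough that $\theta := 2/(q\delta) \ge 3/q$. Lemma \ref{lemma[bcdm]} applied with any integer $\eta > 2(\theta+1)/q$ yields $\norm{\bp(\theta)\bsigma^{-1}}{\ell_q(\Ff)} \le C$, so every hypothesis of Theorem \ref{thm1-dnn} is in force, and the two critical exponents appearing there simplify as $1 + 2/(\theta q) = 1 + \delta$ and $1/(\theta q) = \delta/2$. By item (iii) of Theorem \ref{thm1-dnn} the size satisfies $W\brac{\bphi_{\Lambda(\xi)}} \le C_0 \xi^{1+\delta} \log \xi$, so I would take $\xi_n$ as the largest value for which $C_0 \xi_n^{1+\delta} \log \xi_n \le n$; a standard inversion gives $\xi_n \asymp \brac{n/\log n}^{1/(1+\delta)}$. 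Substituting this asymptotic into items (ii), (iv)--(vi) of Theorem \ref{thm1-dnn} delivers the quantitative claims (ii)--(v) of the present theorem, while (i) is inherited verbatim. The remaining regime $\delta > 2/3$ is covered by applying the already-established result at $\delta' = 2/3$ with a reduced budget $n' \le n$ chosen so that $(n'/\log n')^{3/5} \asymp (n/\log n)^{1/(1+\delta)}$, which preserves all four bounds.

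The main obstacle is essentially bookkeeping: simultaneously fitting the four distinct exponents in items (ii)--(v) of the target theorem. This is arranged by the single choice $\theta = 2/(q\delta)$, dictated by matching the leading size term $\xi^{1 + 2/(\theta q)}$ in Theorem \ref{thm1-dnn}(iii). No genuinely new analytic content is needed beyond what is packaged in Theorem \ref{thm1-dnn} and Lemmas \ref{lemma[ell_2summability]}, \ref{lemma:measurable}, \ref{lemma[bcdm]}.
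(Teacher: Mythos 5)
Your proposal matches the paper's own proof essentially step for step: verify the hypotheses of Theorem \ref{thm1-dnn} via Lemmas \ref{lemma[ell_2summability]}--\ref{lemma[bcdm]}, set $\theta = 2/(\delta q)$ so that $2/(\theta q)=\delta$, pick $\xi_n$ maximal with $C\xi_n^{1+\delta}\log\xi_n\le n$, and read off $\xi_n \asymp (n/\log n)^{1/(1+\delta)}$. Your explicit reduction of the case $\delta > 2/3$ to a smaller $\delta'$ with a reduced budget $n'$ is a welcome elaboration of what the paper dismisses with ``without loss of generality we can assume $\delta\le 1/6$.''
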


\begin{proof} To prove the theorem we apply Theorem \ref{thm1-dnn} to the solution $u$. Without loss of generality we can assume that $\delta \le 1/6$. We take first the number 
	$\theta := 2/\delta q$ satisfying the inequality $\theta \ge 3/q$, and then choose a number $\eta \in \NN$ satisfying the inequality $\eta > 2(\theta + 1)/q$. By using Lemmata \ref{lemma[ell_2summability]}--\ref{lemma[bcdm]}, one can check that   $u \in L_2^\Ee(U,V,\gamma)$ satisfies the assumptions of Theorem \ref{thm1-dnn} for $X= V$ and the set  $(\sigma_\bs)_{\bs \in \FF}$ defined as in \eqref{sigma_s}, where $\Ee$ is the set defined in Lemma \ref{lemma:measurable}. For a given integer $n > 2$, we choose $\xi_n >2$ as the maximal number satisfying the inequality $C \xi_n^{1 + \delta} \log \xi_n \le n$, where $C$ is  the constant in the claim (ii) of Theorem~\ref{thm1-dnn}. It is easy to verify that there exist positive constants $C_1$ and $C_2$ independent of $n$ such that 
	$$
	C_1\left(\frac{n}{\log n}\right)^{\frac{1}{1 + \delta}} \le  \xi_n \le C_2\left(\frac{n}{\log n}\right)^{\frac{1}{1 + \delta}}.
	$$ 
	From Theorem \ref{thm1-dnn} with $\xi=\xi_n$ we deduce the desired results.
	\hfill
\end{proof}

From Theorem \ref{thm-PDE-lognormal-dnn} one can directly derive the following

\begin{theorem}\label{thm:PDE-lognormal-dnn}
	Under the assumptions of Lemma 
	\ref{lemma[ell_2summability]},  let $0 < q < 2$ and $\delta_q:= \min \brac{1, 1/q -1/2}$. Then, given
	an arbitrary number $\delta  \in (0,\delta_q)$,   
	for every integer $n > 1$, we can  construct  a deep ReLU neural network 
	$\bphi_{\Lambda(\xi_n)}:= (\phi_{\bs - \be;\bk})_{(\bs,\be,\bk) \in G(\xi_n)}$
	of the size $W\big(\bphi_{\Lambda(\xi_n)}\big) \le n$ 
	on $\RR^m$ with 
		\begin{equation} \nonumber
	m
	:=
	\begin{cases}
	\min \left\{M, \left\lfloor K n^{1 - \delta} \right\rfloor\right\}  \ \ &{\rm if} \ \ U=\RRM, \\
	\left\lfloor K n^{1 - \delta} \right\rfloor \ \ &{\rm if} \ \ U=\RRi,
	\end{cases}
	\end{equation}	
	and a sequence of points  $Y_{\Lambda(\xi_n)}:= (\by_{\bs - \be;\bk})_{(\bs,\be,\bk) \in G(\xi_n)}$ having the following properties. 
	\begin{itemize}
		\item [{\rm (i)}]
		The deep ReLU neural network $\bphi_{\Lambda(\xi_n)}$ and sequence of points  $Y_{\Lambda(\xi_n)}$ are  independent of $u$;
		\item [{\rm (ii)}] 
		The output dimension of $\bphi_{\Lambda(\xi_n)}$ are at most $\left\lfloor K n^{1 - \delta} \right\rfloor$;	
		\item [{\rm (iii)}] 
		$L\big(\bphi_{\Lambda(\xi_n)}\big)\le C_\delta n^\delta$;
		\item [{\rm (iv)}]
			The  components $\phi_{\bs - \be;\bk}$, $(\bs,\be,\bk) \in G(\xi_n)$, of $\bphi_{\Lambda(\xi_n)}$ are deep ReLU neural networks on 
		$\RR^{m_\bs}$ with $m_\bs  \le C_\delta n^\delta$, having support contained in the super-cube $[-T,T]^{m_\bs}$, 
		where  $T:= C_\delta n^{1 - \delta}$;
		\item [{\rm (v)}]
		The approximation of $u$ by  $\Phi_{\Lambda(\xi_n)}u$ defined as in \eqref{Phi_v},  gives the error estimates 
		\begin{align} \label{error esitimates}
		\| u- \Phi_{\Lambda(\xi_n)} u \|_{\Ll(U,V)} 
		\leq 
		C m^{- \brac{\frac{1}{q} - \frac{1}{2}}}
		 \leq 
		 C_\delta n^{- (1 - \delta)\brac{\frac{1}{q} - \frac{1}{2}}}.
	\end{align}
	\end{itemize}
	Here the constants $K$, $C$ and $C_\delta$  are independent of $J$, $u$ and $n$.	
\end{theorem}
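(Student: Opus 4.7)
My plan is to derive Theorem~\ref{thm:PDE-lognormal-dnn} from Theorem~\ref{thm-PDE-lognormal-dnn} via a change of parameter. Given the target $\delta \in (0,\delta_q)$, I would set $\tilde\delta := \delta/(1-\delta)$, which is well-defined and positive because $\delta_q \le 1$ forces $\delta < 1$. Applying Theorem~\ref{thm-PDE-lognormal-dnn} with $\tilde\delta$ in place of its parameter $\delta$ (and the same integer $n$), I obtain for each $n > 1$ a deep ReLU network $\bphi_{\Lambda(\xi_n)}$ of size at most $n$ and collocation points $Y_{\Lambda(\xi_n)}$ independent of $u$. Under this identification the key exponents simplify: $1/(1+\tilde\delta) = 1-\delta$ governs the input/output dimension and sample count, $\tilde\delta/(2(1+\tilde\delta)) = \delta/2$ governs the depth, and $1/(2(1+\tilde\delta)) = (1-\delta)/2$ governs the support radius.

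Each of items (i)--(v) should then follow from its counterpart in Theorem~\ref{thm-PDE-lognormal-dnn}. Item (i) is immediate from the construction. Item (ii) follows from $(n/\log n)^{1-\delta} \le n^{1-\delta}$, with the two $K$'s absorbed into one. Item (iii) follows because $(n/\log n)^{\delta/2}(\log n)^2 \le C_\delta n^{\delta}$ for $n$ large. Item (iv) follows from $(n/\log n)^{(1-\delta)/2} \le n^{(1-\delta)/2} \le n^{1-\delta}$, since $\delta < 1$. For the first inequality in (v), the error bound coming from Theorem~\ref{thm-PDE-lognormal-dnn} is $C(n/\log n)^{-(1-\delta)(1/q-1/2)}$, which equals $C'\,m^{-(1/q-1/2)}$ up to a constant since the number of collocation points is of order $(n/\log n)^{1-\delta}$.

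The delicate point will be the second inequality of (v), namely $C\,m^{-(1/q-1/2)} \le C_\delta\, n^{-(1-\delta)(1/q-1/2)}$: a naive passage from $(n/\log n)^{-(1-\delta)(1/q-1/2)}$ to $n^{-(1-\delta)(1/q-1/2)}$ picks up a spurious factor $(\log n)^{(1-\delta)(1/q-1/2)}$. To handle this I plan to exploit the strict inequality $\delta < \delta_q$ by re-optimizing the auxiliary parameter $\theta$ in the proof of Theorem~\ref{thm-PDE-lognormal-dnn}. Concretely, instead of the value $\theta = 2/(\tilde\delta q)$ used there, I would choose any $\theta > 2(1-\delta)/(\delta q)$ (while keeping $\theta \ge 3/q$ and selecting $\eta > 2(\theta+1)/q$ so that Lemmas~\ref{lemma[ell_2summability]} and~\ref{lemma[bcdm]} still apply). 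With such $\theta$, the size estimate $W(\bphi_{\Lambda(\xi_n)}) \le C\xi^{1+2/(\theta q)}\log\xi$ from Theorem~\ref{thm1-dnn} satisfies $(1-\delta)\bigl(1+2/(\theta q)\bigr) < 1$ strictly, which permits taking $\xi_n$ exactly proportional to $n^{1-\delta}$ (no log penalty). The depth bound $\xi_n^{1/(\theta q)}(\log \xi_n)^2 \le C_\delta n^\delta$ and the error bound $\xi_n^{-(1/q-1/2)} \le C_\delta n^{-(1-\delta)(1/q-1/2)}$ then come out log-free, and the chain in (v) closes.
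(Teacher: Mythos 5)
Your proposal is correct and follows the paper's intended route: the paper offers no written proof of this theorem, stating only that it ``can be directly derived'' from Theorem~\ref{thm-PDE-lognormal-dnn}. Your substitution $\tilde\delta=\delta/(1-\delta)$ is exactly the right reparametrization, and items (i)--(iv) and the first inequality of (v) go through as you describe. More importantly, you are right that the derivation is \emph{not} literally a black-box application of Theorem~\ref{thm-PDE-lognormal-dnn}: with $\tilde\delta=\delta/(1-\delta)$ the error bound carries a spurious factor $(\log n)^{(1-\delta)(1/q-1/2)}$, while taking $\tilde\delta$ strictly smaller removes that factor at the cost of making $m\asymp (n/\log n)^{1-\delta'}$ eventually exceed $Kn^{1-\delta}$, violating (ii) and the stated form of $m$. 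Your fix --- returning to Theorem~\ref{thm1-dnn}, choosing $\theta>\max\bigl(3/q,\,2(1-\delta)/(\delta q)\bigr)$ (with $\eta>2(\theta+1)/q$ so Lemmas~\ref{lemma[ell_2summability]} and~\ref{lemma[bcdm]} still apply) so that $(1-\delta)\bigl(1+2/(\theta q)\bigr)<1$ strictly, and then taking $\xi_n$ proportional to $n^{1-\delta}$ with a small enough proportionality constant so that $W(\bphi_{\Lambda(\xi_n)})\le C\xi_n^{1+2/\theta q}\log\xi_n\le n$ --- is the correct way to close the chain in (v) with $m\asymp n^{1-\delta}$ and no logarithmic loss; the depth bound $\xi_n^{1/\theta q}(\log\xi_n)^2\le C_\delta n^{\delta}$ also follows since $(1-\delta)/(\theta q)<\delta/2$. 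The only housekeeping left implicit is the range of small $n$ where $\xi_n\le 1$, which is handled by adjusting constants. In short, your proof is a rigorous version of what the paper asserts without detail, and the ``delicate point'' you flag is a genuine gap in a purely formal reading of the paper's one-line derivation.
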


Let us compare the collocation approximation of $u$ by the function
\begin{equation} \label{Phi_Lambda u}
\Phi_{\Lambda(\xi_n)}u := \sum_{(\bs,\be,\bk) \in G(\xi_n)}  (-1)^{|\be|_1}  u(\by_{\bs - \be;\bk})\phi_{\bs - \be;\bk},
\end{equation}
generated from the deep ReLU neural network $\bphi_{\Lambda(\xi_n)}$
as in Theorem \ref{thm:PDE-lognormal-dnn}, and 
the   collocation approximation of $u$ by  the sparse-grid Lagrange gpc interpolation  
\begin{equation} \label{I_Lambda u}
I_{\Lambda(\xi_n)}u  := \sum_{(\bs,\be,\bk) \in G(\xi_n)}  (-1)^{|\be|_1}  u(\by_{\bs - \be;\bk})L_{\bs - \be;\bk}.
\end{equation}
Both the methods are  based on $m$ the same particular solvers $\brac{u(\by_{\bs - \be;\bk})}_{(\bs,\be,\bk) \in G(\xi_n)}$.
From Corollary~\ref{corollary:coll-approx} one can see that under the assumptions of Theorem \ref{thm:PDE-lognormal-dnn}, there holds the  error bound in $m$ for the last approximation:
$$
\norm{u- I_{\Lambda(\xi_n)} u}{\Ll(U, V)} \le C m^{- \brac{\frac{1}{q} - \frac{1}{2}}},
$$
which is the same as that in  \eqref{error esitimates} for the first approximation since by the construction the parameter $m$ in  \eqref{error esitimates} can be treated as independent.

After the present paper and the paper  \cite{DNP21} appeared in ArXiv website, we have been informed about the paper \cite{SZ2021} on some problems similar to the problems considered in \cite{DNP21}  in a private communication with its authors.

\bigskip
\noindent
{\bf Acknowledgments.}  This work  is funded by Vietnam National Foundation for Science and Technology Development (NAFOSTED) under  Grant No. 102.01-2020.03. A part of this work was done when  the author was working at the Vietnam Institute for Advanced Study in Mathematics (VIASM). He would like to thank  the VIASM  for providing a fruitful research environment and working condition.

\appendix
\section{Appendix} \label{appendix}

\subsection{Auxiliary lemmata}

\begin{lemma}\label{lemma: |s|_1, |s|_0}
	Let $\theta\geq 0$ and $0<q<\infty$.
Let $\bsigma= (\sigma_{\bs})_{\bs \in \Ff}$  be a set of numbers strictly larger than $1$. Then  we have the following.
	\begin{itemize}
		\item[{\rm (i)}]
			If $\norm{\bp\brac{\frac{\theta}{q}}\bsigma^{-1}}{\ell_q(\Ff)} \le K < \infty$, where the constant $K$ is independent of $J$, then 
		\begin{align}
		\sum_{ \bs\in \Lambda(\xi)}
		p_{\bs}(\theta) 
		\leq
		K \xi \quad \forall \xi > 1.
		\label{sum_Lambda p_s}
		\end{align}
		In particular, 	if 	$\norm{\bsigma^{-1}}{\ell_q(\Ff)}^q \le K_q < \infty$, where the constant 
		$K_q \ge 1$ and is independent of $J$, then
		the set $\Lambda(\xi)$ is finite and 
		\begin{align} \label{|Lambda(xi)|}
		|\Lambda(\xi)|
		\le K_q \xi\quad \forall \xi > 1.
		\end{align}
		\item[{\rm (ii)}]
		If $\norm{\bp(\theta)\bsigma^{-1}}{\ell_q(\Ff)}^{1/\theta} \le  K_{q,\theta} < \infty$, where the constant $K_{q,\theta}$ is independent of $J$, then 
		\begin{align} \label{|s|_1}
		m_1(\xi)
		\le K_{q,\theta} \xi^{\frac{1}{\theta q}}\quad \forall \xi > 1.
		\end{align}
			\item[{\rm (iii)}]
		If $\sigma_{\be^{i'}} \le \sigma_{\be^i}$ for $i' < i$, and if 
		$\norm{\bsigma^{-1}}{\ell_q(\Ff)}^q \le K_q < \infty$, where the constant $K_q \ge 1$  and is independent of $J$, then 
		\begin{align} \label{m(xi)<}
		m(\xi)
		\le K_q \xi \quad \forall \xi > 1.
		\end{align}
	\end{itemize}
\end{lemma}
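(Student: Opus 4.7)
\textbf{Proof plan for Lemma \ref{lemma: |s|_1, |s|_0}.} The three parts all exploit the same elementary observation: for $\bs \in \Lambda(\xi)$ the defining inequality $\sigma_\bs^q \le \xi$ can be rewritten as $1 \le \xi \sigma_\bs^{-q}$, which lets us convert counting estimates on $\Lambda(\xi)$ into $\ell_q(\Ff)$-summability statements on $\bsigma^{-1}$ (weighted by suitable powers of $\bp$).

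For (i), I would write, for $\bs \in \Lambda(\xi)$, $p_\bs(\theta) \le \xi \, p_\bs(\theta) \, \sigma_\bs^{-q}$, sum over $\bs \in \Lambda(\xi)$, extend the sum to all of $\Ff$, and use the identity $p_\bs(\theta) = p_\bs(\theta/q)^q$ coming directly from the definition \eqref{[p_s]}. The right-hand side then becomes $\xi \, \norm{\bp(\theta/q)\bsigma^{-1}}{\ell_q(\Ff)}^q$, which is bounded by a constant times $\xi$ by hypothesis. The special case \eqref{|Lambda(xi)|} follows by taking $\theta = 0$ so that $p_\bs(0) \equiv 1$ and $\sum_{\bs \in \Lambda(\xi)} 1 = |\Lambda(\xi)|$.

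For (ii), I would pick a maximizer $\bs^\star \in \Lambda(\xi)$ with $|\bs^\star|_1 = m_1(\xi)$, so $\sigma_{\bs^\star}^{-q} \ge \xi^{-1}$. The elementary inequality $\prod_{j \in \Nn}(1+s_j) \ge 1 + \sum_{j \in \Nn} s_j$ (proved by expanding the product) gives
\[
p_{\bs^\star}(\theta)^q \ge (1 + m_1(\xi))^{\theta q}.
\]
Retaining only the term $\bs = \bs^\star$ in $\sum_{\bs} p_\bs(\theta)^q \sigma_\bs^{-q} = \norm{\bp(\theta)\bsigma^{-1}}{\ell_q(\Ff)}^q$ yields $(1 + m_1(\xi))^{\theta q} \le K_{q,\theta}^{\theta q} \cdot \xi$, and extracting the $\theta q$-th root gives \eqref{|s|_1} (possibly after renaming the constant, as the paper allows).

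For (iii), fix $j \le m(\xi)$ and choose $\bs \in \Lambda(\xi)$ with $s_j \ge 1$; then $\be^j \le \bs$, so the monotonicity of $\bsigma$ gives $\sigma_{\be^j} \le \sigma_\bs \le \xi^{1/q}$. The hypothesis $\sigma_{\be^{i'}} \le \sigma_{\be^i}$ for $i' < i$ then propagates the bound down the chain: $\sigma_{\be^i} \le \xi^{1/q}$ for all $i \le j$. Summing the resulting inequalities $\sigma_{\be^i}^{-q} \ge \xi^{-1}$ and comparing with the full sum gives $j \xi^{-1} \le \sum_{i=1}^j \sigma_{\be^i}^{-q} \le \norm{\bsigma^{-1}}{\ell_q(\Ff)}^q \le K_q$. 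Taking the supremum over admissible $j$ proves \eqref{m(xi)<}. There is no serious obstacle; the only subtle point is recognizing in (ii) that $p_\bs(\theta)^q$ factorizes as a product over $j$ and dominates a function of $|\bs|_1$, but even this is elementary.
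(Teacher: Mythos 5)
Your proposal is correct, and for part (i) it coincides with the paper's own argument: the paper also writes $p_{\bs}(\theta)\le \xi\,p_{\bs}(\theta)\sigma_{\bs}^{-q}$ on $\Lambda(\xi)$, extends the sum to all of $\Ff$, and identifies $p_{\bs}(\theta)=p_{\bs}(\theta/q)^{q}$ with the $q$-th power of the assumed $\ell_q$-norm. For parts (ii) and (iii) the paper gives no argument at all — it merely cites \cite[Lemmas A.1--A.2]{DNP21} — so your self-contained proofs are a genuine addition; both are the natural arguments and they work. In (ii), note that you do not need a maximizer to exist: the inequality $\brac{1+|\bs|_1}^{\theta q}\le \norm{\bp(\theta)\bsigma^{-1}}{\ell_q(\Ff)}^{q}\,\xi$ holds for \emph{every} $\bs\in\Lambda(\xi)$, so taking the supremum over $\Lambda(\xi)$ gives \eqref{|s|_1} without worrying about attainment (and $\Lambda(\xi)$ is in any case finite by the special case of (i), since $p_\bs(\theta)\ge 1$).

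One caveat on (iii): your step $\sigma_{\be^{j}}\le\sigma_{\bs}$ for $\be^{j}\le\bs$ invokes monotonicity of $\bsigma$ with respect to the partial order on $\Ff$, which is \emph{not} among the literal hypotheses of the lemma as stated — the lemma only posits the linear ordering $\sigma_{\be^{i'}}\le\sigma_{\be^{i}}$ for $i'<i$ and the $\ell_q$-bound. This monotonicity is part of Assumption (I) (the sequence $\bsigma$ there is required to be increasing), and the lemma is only ever applied to such sequences, so your argument is valid in every use the paper makes of it; but strictly speaking you are using an extra hypothesis that the lemma's statement omits. It would be worth stating explicitly that (iii) additionally requires $\sigma_{\bs'}\le\sigma_{\bs}$ for $\bs'\le\bs$ (or at least $\sigma_{\be^{j}}\le\sigma_{\bs}$ whenever $s_j>0$), since without some such assumption the existence of $\bs\in\Lambda(\xi)$ with $s_{m(\xi)}>0$ gives no control on $\sigma_{\be^{m(\xi)}}$. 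With that hypothesis made explicit, your chain $m(\xi)\,\xi^{-1}\le\sum_{i=1}^{m(\xi)}\sigma_{\be^{i}}^{-q}\le K_q$ is exactly right.
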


\begin{proof} The claim (ii) and (iii) were proven in \cite[Lemmata 3.2 and 3.3]{DNP21} for the case $\Ff = \FF$. The case $\Ff = \NN_0^M$ can be proven in a similar way.  Let us prove the claim (i).
	Indeed, we have for every $\xi > 1$,
	\begin{align}
	\sum_{\bs \in \Lambda(\xi)} 	p_{\bs}(\theta) 
	\le
	\sum_{\bs \in \Ff: \ \sigma_{\bs}^{-q}\xi\ge 1} p_{\bs}(\theta) 	\xi \sigma_{\bs}^{-q}
	\notag
	\leq 
	\xi
	\sum_{\bs\in \Ff}
	p_{\bs}(\theta)
	\sigma_{\bs}^{-q}
	\leq 
	C\xi.
	\notag
	\end{align}
	\hfill
\end{proof}

\begin{lemma}\label{lemma:G(xi)<} Let $\theta\geq 0$, $0<q<\infty$ and $\xi > 1$. Let $\bsigma= (\sigma_{\bs})_{\bs \in \Ff}$  be a set of numbers strictly larger than $1$. 
	If 	 $\norm{\bp\brac{\frac{\theta+2}{q}}\bsigma^{-1}}{\ell_q(\Ff)} \le C < \infty$, where the constant $C$ is independent of $J$, then there holds 
	\begin{align}
	\sum_{ (\bs,\be,\bk) \in G(\xi)}
	p_{\bs}(\theta) 
	\leq
	C\xi \quad \forall \xi > 1.
	\label{ps sigmas1}
	\end{align}
	In particular, 	if 	and $\norm{\bp\brac{\frac{2}{q}}\bsigma^{-1}}{\ell_q(\Ff)}^q \le C_q < \infty$, where the constant $C$ is independent of $J$, then
	$$
	|G(\xi)| \leq C_q \xi \quad \forall \xi > 1.
	$$
\end{lemma}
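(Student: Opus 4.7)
The goal is to bound $\sum_{(\bs,\be,\bk)\in G(\xi)} p_\bs(\theta)$ by $C\xi$. My plan is to first perform the outer summation over $\bs \in \Lambda(\xi)$, absorbing the cost of ranging over $\be$ and $\bk$ into an extra factor $p_\bs(2)$, and then invoke Lemma~\ref{lemma: |s|_1, |s|_0}(i) with $\theta$ replaced by $\theta+2$. The hypothesis $\norm{\bp((\theta+2)/q)\bsigma^{-1}}{\ell_q(\Ff)} \le C$ is tailored precisely so that this substitution goes through.

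First, for a fixed $\bs \in \Lambda(\xi)$, I would count the number of admissible pairs $(\be,\bk)$. By definition of $E_\bs$, each $e_j$ is $0$ or $1$ and can only be $1$ when $s_j > 0$, hence $|E_\bs| = 2^{|\bs|_0}$. For any $\be \in E_\bs$, the index set $\pi_{\bs-\be} = \prod_{j\in\Nn}\pi_{s_j-e_j}$ has cardinality $\prod_{j\in\Nn}(s_j-e_j+1) \le \prod_{j\in\Nn}(1+s_j) = p_\bs(1)$. Using the simple inequality $2^{|\bs|_0} \le p_\bs(1)$ (each nonzero factor satisfies $2 \le 1+s_j$), I obtain
\begin{equation*}
|E_\bs| \cdot \max_{\be\in E_\bs}|\pi_{\bs-\be}| \ \le \ p_\bs(1)\cdot p_\bs(1) \ = \ p_\bs(2).
\end{equation*}

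This gives
\begin{equation*}
\sum_{(\bs,\be,\bk)\in G(\xi)} p_\bs(\theta)
\ \le \
\sum_{\bs\in\Lambda(\xi)} p_\bs(\theta)\, p_\bs(2)
\ = \
\sum_{\bs\in\Lambda(\xi)} p_\bs(\theta+2),
\end{equation*}
where in the last step I use the multiplicativity of the weights: $p_\bs(\theta)p_\bs(2) = \prod_{j\in\Nn}(1+s_j)^{\theta+2} = p_\bs(\theta+2)$. Now the hypothesis $\norm{\bp((\theta+2)/q)\bsigma^{-1}}{\ell_q(\Ff)} \le C$ is exactly what Lemma~\ref{lemma: |s|_1, |s|_0}(i) requires (with $\theta$ there taken as $\theta+2$), and the conclusion $\sum_{\bs\in\Lambda(\xi)} p_\bs(\theta+2) \le C\xi$ yields \eqref{ps sigmas1}.

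The ``in particular'' statement follows by specializing $\theta = 0$: then $p_\bs(\theta)\equiv 1$, so the left-hand side is exactly $|G(\xi)|$, and the hypothesis $\norm{\bp(2/q)\bsigma^{-1}}{\ell_q(\Ff)}^q \le C_q$ (equivalent to $\norm{\bp(2/q)\bsigma^{-1}}{\ell_q(\Ff)} \le C_q^{1/q}$ up to constants) is the $\theta=0$ instance of our hypothesis. I do not expect any real obstacle here; the only point requiring care is the counting estimate for $|E_\bs|\cdot|\pi_{\bs-\be}|$, and in particular recognizing that these two combinatorial factors each cost at most $p_\bs(1)$, so that their product is absorbed into $p_\bs(2)$ and shifts the exponent parameter by exactly $2$.
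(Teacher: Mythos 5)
Your proof is correct and follows essentially the same route as the paper's: the same counting bounds $|E_\bs|=2^{|\bs|_0}\le p_\bs(1)$ and $|\pi_{\bs-\be}|\le p_\bs(1)$, absorption of the combinatorial factors into $p_\bs(2)$ to shift the exponent to $\theta+2$, and then the reduction to the $\ell_q$-norm hypothesis (the paper inlines the argument of Lemma~\ref{lemma: |s|_1, |s|_0}(i) at this point rather than citing it, but the computation is identical).
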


\begin{proof}
	We have for every $\xi > 1$,
	\begin{align}
	\sum_{ (\bs,\be,\bk) \in G(\xi)} p_{\bs}(\theta) 
	&
	=
	\sum_{\bs \in \Lambda(\xi)}  \sum_{\be \in E_\bs}
	\sum_{\bk \in \pi_{\bs - \be}} 
	p_{\bs}(\theta) 
	\le 
	\sum_{\bs \in \Lambda(\xi)} 	p_{\bs}(\theta)  \sum_{\be \in E_\bs} |\pi_{\bs - \be}|
	\\	
	&\le
	\sum_{\bs \in \Lambda(\xi)} 	p_{\bs}(\theta)  |E_\bs| 	p_{\bs}(1)
	=
	\sum_{\bs \in \Lambda(\xi)} 	p_{\bs}(\theta + 1)  2^{|\bs|_0}
	\le
	\sum_{\bs \in \Lambda(\xi)} 	p_{\bs}(\theta + 2)
	\\	
	&\le  
	\sum_{\bs \in \Ff: \ \sigma_{\bs}^{-q}\xi\ge 1} p_{\bs}(\theta + 2) 	\xi \sigma_{\bs}^{-q}
	\notag
	\leq 
	\xi
	\sum_{\bs\in \Ff}
	p_{\bs}(\theta+2)
	\sigma_{\bs}^{-q}
	\leq 
	C\xi.
	\notag
	\end{align}
	\hfill
\end{proof}

\begin{lemma}\label{lemma:sumH_s'}
	We have for any $\bs, \bs' \in \Ff$, 
	\begin{align} 
	\sum_{\bk \in \pi_\bs} |H_{\bs'}(\by_{\bs;\bk})| \le  e^{K|\bs|_1},
	\label{sumH_s'}
	\end{align}
	where the constant $K$ is independent of $J$ and $\bs, \bs'$.
\end{lemma}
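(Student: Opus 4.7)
The plan is to exploit the tensor-product structure of $H_{\bs'}$, $\by_{\bs;\bk}$ and $\pi_\bs$ to factor the sum into a product of univariate sums, and then bound each univariate factor. Since $H_{\bs'}(\by) = \prod_{j \in \Nn} H_{s'_j}(y_j)$, $\by_{\bs;\bk} = (y_{s_j;k_j})_{j \in \Nn}$, and $\pi_\bs = \prod_{j \in \Nn} \pi_{s_j}$,
$$
\sum_{\bk \in \pi_\bs} |H_{\bs'}(\by_{\bs;\bk})| \;=\; \prod_{j \in \Nn} \Big(\sum_{k\in \pi_{s_j}} |H_{s'_j}(y_{s_j;k})|\Big).
$$
Every factor with $s_j = s'_j = 0$ equals $|H_0(0)| = 1$, so all but finitely many factors are trivial and the infinite product is well defined. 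It therefore suffices to bound each univariate factor, uniformly in $s'_j$, by $1$ when $s_j = 0$ and by $e^{Ks_j}$ when $s_j \ge 1$; the product then gives $\prod_{j:s_j\ge 1} e^{Ks_j} = e^{K|\bs|_1}$.

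For $s_j \ge 1$ the ingredients are the Cramér bound \eqref{CramerBnd}, which I would use in the explicit form $|H_n(y)| \le C_0 e^{y^2/4}$ uniformly in $n$, together with the classical estimate $|y_{s;k}| \le C_1\sqrt{s+1}$ on the zeros of $H_{s+1}$. These combine to give $|H_{s'_j}(y_{s_j;k})| \le C_0 e^{K_0(s_j+1)}$ for every $k \in \pi_{s_j}$ and every $s'_j$, and summing over the $|\pi_{s_j}| = s_j+1$ nodes yields
$$
\sum_{k\in\pi_{s_j}} |H_{s'_j}(y_{s_j;k})| \;\le\; (s_j+1)\,C_0\, e^{K_0(s_j+1)} \;\le\; e^{K s_j}
$$
for a suitable constant $K$, the polynomial prefactor and the constant-exponential factor being absorbed using $s_j \ge 1$.

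For $s_j = 0$ the factor reduces to $|H_{s'_j}(0)|$ since $\pi_0 = \{0\}$ and $y_{0;0} = 0$, and this is where the main obstacle lies. The Cramér bound alone yields only $|H_n(0)| \le (2\pi)^{1/4}$, which is strictly greater than $1$ and would be fatal: arbitrarily many indices $j$ may belong to $\supp(\bs') \setminus \supp(\bs)$, each contributing such a factor and producing a prefactor depending on $\bs'$ that cannot be absorbed into $e^{K|\bs|_1}$. I would overcome this with the Mehler identity
$$
\sum_{n\ge 0} H_n(0)^2 r^n \;=\; (1-r^2)^{-1/2} \;=\; \sum_{k\ge 0}\binom{2k}{k}\frac{r^{2k}}{4^k},
$$
which, upon matching coefficients, gives $H_{2k+1}(0) = 0$ and $H_{2k}(0)^2 = \binom{2k}{k}/4^k \le 1$, hence $|H_n(0)| \le 1$ for every $n \ge 0$. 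Putting the two cases together and taking the product over $j$ delivers the claim with $K$ depending only on the absolute constants $C_0, C_1$, hence independent of $J$, $\bs$, and $\bs'$.
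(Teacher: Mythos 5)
Your proof is correct, and it follows the same overall strategy as the paper's (factor the sum into univariate factors via the tensor-product structure, then control each factor with Cram\'er's bound and the location of the Hermite zeros), but it diverges on one substantive point --- and on that point your version is the sound one. The paper disposes of the coordinates $j$ with $s_j=0$ but $s'_j>0$ by asserting that $H_s(0)=0$ for all $s>0$, so that $H_{\bs'}(\by_{\bs;\bk})$ vanishes whenever $\supp(\bs')\not\subset\supp(\bs)$; this is false for even $s$ (e.g.\ $H_2(0)=-1/\sqrt{2}$ in the $L_2(\gamma)$-normalization), so the paper's case split does not cover all nonzero terms. You correctly identify that these coordinates are the real obstacle: Cram\'er's bound alone gives only $|H_n(0)|\le(2\pi)^{1/4}>1$, and since arbitrarily many indices may lie in $\supp(\bs')\setminus\supp(\bs)$, any per-coordinate constant exceeding $1$ would produce a factor depending on $\bs'$ that cannot be absorbed into $e^{K|\bs|_1}$. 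Your resolution via $H_{2k}(0)^2=\binom{2k}{k}4^{-k}\le 1$ (and $H_{2k+1}(0)=0$), obtained from the generating function $(1-r^2)^{-1/2}$, closes this gap cleanly. The remaining estimates are fine: for $s_j\ge 1$ you use the cruder uniform bound $|y_{s;k}|\le C_1\sqrt{s+1}$ on all nodes in place of the paper's sharper $|y_{s;k}|\le K_1|k|/\sqrt{s}$, which costs nothing since the $(s_j+1)$-fold sum of a uniform bound $C_0e^{K_0(s_j+1)}$ is still absorbed into $e^{Ks_j}$ when $s_j\ge 1$. In short, your argument both proves the lemma and repairs a genuine defect in the paper's own proof.
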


\begin{proof}
	From Cram\'er's  bound we deduce that (see, e.g., \cite [Lemma 3.2]{Dung21})	
	\begin{equation} \label{CramerBnd}
	|H_s(y)\sqrt{g(y)}| 
	\ < \ 1, \quad \forall y \in \RR, \ \forall s \in \NN_0,	
	\end{equation}
	or, equivalently,
		\begin{equation} \label{CramerBnd2}
	|H_s(y)|
	\ < \
	(2\pi)^{1/4} e^{y^2/4}, \quad \forall y \in \RR, \ \forall s \in \NN_0.	
	\end{equation}
	Let $\bs, \bs' \in \Ff$ and $\bk \in \pi_\bs$ be given. Notice that for the univariate Hermite polynomials, 
	$H_0 = 1$,  $H_{2s+1}(0) = 0$ and $|H_{2s}(0)|\le 1 $ for $s \in \NN_0$. Hence, we have by \eqref{CramerBnd2},
	\begin{align} 
	|H_{\bs'}(\by_{\bs;\bk})| 
	\le 
	\prod_{j \in \supp (\bs')\cap \supp (\bs)} |H_{s_j'}(y_{s_j,k_j})|
	\le 
	\prod_{j \in \supp (\bs)}  	(2\pi)^{1/4} e^{y_{s_j,k_j}^2/4}.
	\label{}
	\end{align}
	Therefore,
	\begin{align} 
	\sum_{\bk \in \pi_\bs} |H_{\bs'}(\by_{\bs;\bk})| 
	\le  
	\sum_{\bk \in \pi_\bs} \prod_{j \in \supp (\bs)} 	(2\pi)^{1/4} e^{y_{s_j,k_j}^2/4}
	= 
	\prod_{j \in \supp (\bs)} (2\pi)^{1/4} \sum_{k_j \in \pi_{s_j}} 	 e^{y_{s_j,k_j}^2/4}.
	\label{sum_{bk in pi_bs} |H_{bs'}|}
	\end{align}
	The inequalities \cite[(6.31.19)]{Sze39}  yield that
	\begin{equation} \label{}
	|y_{s;k}|
	\ \le \
	K_1 \frac{|k|}{\sqrt s}, \quad \forall k \in \pi_s, \ \forall s \in \NN.	
	\end{equation}
	Consequently,		
	\begin{equation} \label{}
	(2\pi)^{1/4} \sum_{k_j \in \pi_{s_j}} 	 e^{y_{s_j,k_j}^2/4}
	\ \le \		
	2 (2\pi)^{1/4} \sum_{k_j=0}^{\lfloor s_j/2\rfloor} 
	\exp{\left( \frac{K_1}{4} \frac{k_j^2}{s_j}\right)}
	\ \le \
	e^{Ks_j}, \quad \forall s_j \in \NN.	
	\end{equation}
	This allows us to finish the proof of the lemma  as	
	\begin{align} 
	\sum_{\bk \in \pi_\bs} |H_{\bs'}(\by_{\bs;\bk})| 
	\le  
	\prod_{j \in \supp (\bs)} 	 e^{K s_j}
	= e^{K |\bs|_1}.
	\notag
	\end{align}
	\hfill
\end{proof}

\begin{lemma}\label{lemma: normL_{s;k}<}
	We have for any $s \in \NN$  and $k \in \pi_{s}$, 
	\begin{align} 
	\norm{L_{s;k}}{L_2(\RR,\gamma)} 
	\le
	e^{K s},
	\label{norm-L_2}
	\end{align}
	and
	\begin{align} 
	\norm{L_{s;k}}{L_{\infty}^{\sqrt{g}}(\RR)} 
	\le
	e^{K s},
	\label{norm-L_infty}
	\end{align}
	where the constants $K$ are independent of $s$ and $k \in \pi_{s}$.
\end{lemma}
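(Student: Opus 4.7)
The plan is to derive both bounds from a single Fourier--Hermite expansion of $L_{s;k}$, computed via the exactness of Gauss--Hermite quadrature. Since $L_{s;k}$ is a polynomial of degree $s$, it admits a representation $L_{s;k}=\sum_{j=0}^{s}c_j H_j$ with $c_j=\int_{\RR}L_{s;k}(y)H_j(y)g(y)\,\rd y$. The integrand $L_{s;k}H_j$ is a polynomial of degree at most $2s$, and the Gauss--Hermite quadrature on the $s+1$ nodes $(y_{s;i})_{i\in\pi_s}$ (the roots of $H_{s+1}$) is exact up to degree $2s+1$. Denoting by $w_{s;i}$ the corresponding quadrature weights and using $L_{s;k}(y_{s;i})=\delta_{ik}$, this forces $c_j=w_{s;k}H_j(y_{s;k})$, so that
\begin{equation*}
L_{s;k}\ =\ w_{s;k}\sum_{j=0}^{s}H_j(y_{s;k})\,H_j.
\end{equation*}

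For the $L_2(\RR,\gamma)$-bound \eqref{norm-L_2}, I would apply the same quadrature to $L_{s;k}^2$, which is again a polynomial of degree $2s$, to obtain $\norm{L_{s;k}}{L_2(\RR,\gamma)}^2=w_{s;k}$. The Gauss--Hermite weights are strictly positive (classical fact) and, by exactness on the constant $1$, satisfy $\sum_{i\in\pi_s}w_{s;i}=\int_\RR g\,\rd y=1$. Hence $w_{s;k}\le 1$, and \eqref{norm-L_2} holds even with $K=0$.

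For the weighted sup-norm bound \eqref{norm-L_infty}, I would multiply the expansion of $L_{s;k}$ by $\sqrt{g(y)}$, apply the triangle inequality, and invoke Cramer's bound \eqref{CramerBnd} to estimate $|H_j(y)\sqrt{g(y)}|\le 1$ termwise, leaving
\begin{equation*}
|L_{s;k}(y)|\sqrt{g(y)}\ \le\ w_{s;k}\sum_{j=0}^{s}|H_j(y_{s;k})|.
\end{equation*}
Cramer's bound in the equivalent form \eqref{CramerBnd2} controls each $|H_j(y_{s;k})|$ by $(2\pi)^{1/4}e^{y_{s;k}^2/4}$, while the node estimate $|y_{s;k}|\le K_1|k|/\sqrt{s}$ (already employed in the proof of Lemma~\ref{lemma:sumH_s'}), together with $|k|\le (s+1)/2$, gives $y_{s;k}^2\le Cs$ for an absolute constant $C$. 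Summing the $s+1$ terms and using $w_{s;k}\le 1$ then yields $|L_{s;k}(y)|\sqrt{g(y)}\le (s+1)(2\pi)^{1/4}e^{Cs/4}\le e^{Ks}$ for a suitably large $K$.

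There is no serious obstacle to this argument. The only ingredients not explicitly present in the excerpt are the exactness and positivity of Gauss--Hermite quadrature, both of which are classical. All remaining tools --- orthonormality of $(H_j)$, Cramer's bound, and the bound on Hermite nodes --- have already been deployed in the proof of Lemma~\ref{lemma:sumH_s'}, so the present lemma essentially follows from repackaging those same ingredients through the quadrature identity.
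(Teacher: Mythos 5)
Your proof is correct, but it proceeds by a genuinely different route from the paper's. The paper argues pointwise: it first observes that $|L_{s;k}(y)|\le 1$ on the interval between the nodes neighbouring $y_{s;k}$, and away from that interval it uses the factorization $L_{s;k}(y)=A_{s;k}\,(y-y_{s;k})^{-1}H_{s+1}(y)$ together with Szeg\H{o}'s lower bound on the spacing of consecutive roots of $H_{s+1}$ and Stirling's formula to show $|y-y_{s;k}|^{-1}|A_{s;k}|\le e^{K_1 s}$; the two bounds \eqref{norm-L_2} and \eqref{norm-L_infty} then follow from $\norm{H_{s+1}}{L_2(\RR,\gamma)}=1$ and Cram\'er's bound \eqref{CramerBnd}, respectively. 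Your argument instead packages everything into the Gauss--Hermite quadrature identity $L_{s;k}=w_{s;k}\sum_{j=0}^{s}H_j(y_{s;k})H_j$ (equivalently, the Christoffel--Darboux kernel representation of the Lagrange basis), which immediately gives the exact value $\norm{L_{s;k}}{L_2(\RR,\gamma)}^2=w_{s;k}\le 1$ --- strictly sharper than \eqref{norm-L_2} and with no exponential factor at all --- and reduces \eqref{norm-L_infty} to Cram\'er's bound plus the node estimate $|y_{s;k}|\le K_1|k|/\sqrt{s}$, exactly the two ingredients already used in the proof of Lemma~\ref{lemma:sumH_s'}. What your approach buys is brevity and a better $L_2$ constant, at the price of importing the (classical, but not stated in the paper) exactness and positivity of Gauss--Hermite quadrature; note that the positivity $w_{s;i}>0$ you need in order to deduce $w_{s;k}\le 1$ from $\sum_i w_{s;i}=1$ is itself supplied by your own computation $w_{s;i}=\norm{L_{s;i}}{L_2(\RR,\gamma)}^2$, so the argument is self-contained. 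The paper's longer route has the side benefit of producing the explicit pointwise comparison of $L_{s;k}$ with $H_{s+1}$, but for the two norm bounds actually claimed in the lemma your derivation is complete and, if anything, cleaner.
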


\begin{proof}
	Notice that $L_{s;k}$ is a polynomial having $s$ single zeros $\{y_{s;j}\}_{j \in \pi_{s}, \, j \not= k}$, and that $L_{s;k}(y_{s;k}) =1$. Moreover,
	there is no any zero in the open interval $(y_{s;k-1}, y_{s;k})$ and 
	$$
	L_{s;k}(y_{s;k}) = \max_{y \in [y_{s;k-1}, y_{s;k}]} L_{s;k}(y) = 1.
	$$
	Hence,
	\begin{align} 
	\label{|L_{s;k}(y)| < 1}
	|L_{s;k}(y)| \le 1, \quad \forall y \in [y_{s;k-1}, y_{s;k+1}].
	\end{align}
	
	Let us estimate $|L_{s;k}(y)|$ for $y \in \RR \setminus (y_{s;k-1}, y_{s;k+1})$.
	From the definition one can see that 
	\begin{align} 
	L_{s;k} (y)
	:=
	\prod_{k' \in \pi_s \ k'\not=k}\frac{y - y_{s;k'}}{y_{s;k} - y_{s;k'}}
	=
	A_{s;k} (y - y_{s;k})^{-1} H_{s+1}(y),
	\label{L_{s;k}(y)=}
	\end{align}
	where
	\begin{align} \label{A_{s;k}}
	A_{s;k}
	:=
	\big((s+1)!\big)^{1/2}\prod_{k' \in \pi_s \ k'\not=k}(y_{s;k} - y_{s;k'})^{-1}.
	\end{align}
	From the inequalities  \cite[(6.31.22)]{Sze39}
	\begin{align} \label{d_s}
	\frac{\pi \sqrt{2}}{ \sqrt{2s + 3}}
	\le d_s
	\le
	\frac{\sqrt{10.5}}{ \sqrt{2s + 3}}
	\end{align}
	for	the minimal distance $d_s$ between consecusive $y_{s;k}$, $k \in \pi_s$, we have that
	$$
	|y - y_{s;k}|^{-1} \le d_s^{-1} \le \frac{ \sqrt{2s + 3}}{\sqrt{10.5}} < \sqrt{s}, \quad \forall y \in \RR \setminus (y_{s;k-1}, y_{s;k+1}),
	$$
	and for any $s \in \NN$ and $k, k' \in \pi_{s}$ with $k' \not= k$,
	\begin{align} 
	|y_{s;k} - y_{s;k'}|^{-1}
	\le 
	C \frac{\sqrt{s}}{|k - k'|},
	\label{}
	\end{align}
	which yield for any $y \in \RR \setminus (y_{s;k-1}, y_{s;k+1})$,
	\begin{align} 
	|y - y_{s;k}|^{-1}|A_{s;k}|
	&\le
	\sqrt{s} \big((s+1)!\big)^{1/2} \prod_{k' \in \pi_s \ k'\not=k}|y_{s;k} - y_{s;k'}|^{-1}
	\le
	\sqrt{s} C^s \frac{\big((s+1)!\big)^{1/2}s^{s/2}}{k! (s-k)!}
	\notag
	\\
	&\le
	\sqrt{s} C^s \binom{s}{k}\frac{\big((s+1)!\big)^{1/2}s^{s/2}}{s! }
	\le
	\sqrt{s}(2C)^s \frac{\big((s+1)!\big)^{1/2}s^{s/2}}{s! }
	\le
	e^{K_1 s}.
	\label{|A_{s;k}|<}
	\end{align}
	In the last step we used the Stirling's approximation for factorial. Thus, we have proven that
	\begin{align} 
	|L_{s;k} (y)|
	\le 
	e^{K_1 s} |H_{s+1}(y)|, \quad \forall y \in \RR \setminus (y_{s;k-1}, y_{s;k+1}).
	\label{}
	\end{align}
	With $I_{s;k}:= [y_{s;k-1}, y_{s;k+1}]$, from the last estimate and \eqref{|L_{s;k}(y)| < 1} we prove \eqref{norm-L_2}:
	\begin{align} 
	\norm{L_{s;k}}{L_2(\RR,\gamma)}^2
	&=
	\norm{L_{s;k}}{L_2(I_{s;k},\gamma)}^2 + \norm{L_{s;k}}{L_2(\RR \setminus I_{s;k},\gamma)}^2
	\notag
	\\
	&
	\le
	1 + e^{2K_1 s}\norm{H_s}{L_2(\RR ,\gamma)}^2
	= 1 + e^{2K_1 s}
	\le 
	e^{2K s}.
	\notag
	\end{align}
	The inequality \eqref{norm-L_infty} can be proven similarly by using \eqref{CramerBnd}.
	\hfill
\end{proof}

\begin{lemma}\label{lemma: b_k<}
	Assume that p and q are polynomials on $\RR$ in the form 
	\begin{align} 
	p(y):= \sum_{k=0}^m a_k y^k, \quad 	q(y):= \sum_{k=0}^{m-1} b_k y^k,
	\label{}
	\end{align}
	and that $p(y) = (y - y_0) q(y)$ for a point $y_0 \in \RR$. Then we have
	\begin{align} 
	|b_k| 
	\le
	\sum_{k=0}^m |a_k|, \quad 	k = 0,..., m-1.
	\label{}
	\end{align}
\end{lemma}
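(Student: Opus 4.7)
The plan is to derive the coefficient relations that follow from the identity $p(y)=(y-y_0)q(y)$, and then to run one of two simple recursions — choosing the direction based on whether $|y_0|\le 1$ or $|y_0|\ge 1$ — so that the factor $|y_0|$ never inflates the bound.

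Expanding $(y-y_0)q(y)=\sum_{k=0}^{m-1}b_k y^{k+1}-y_0\sum_{k=0}^{m-1}b_k y^k$ and matching with $p(y)=\sum_{k=0}^m a_k y^k$ yields $a_m=b_{m-1}$, $a_0=-y_0 b_0$, and $a_k=b_{k-1}-y_0 b_k$ for $1\le k\le m-1$. This gives two equivalent recursions for the $b_k$'s:
\begin{equation*}
b_{k-1}=a_k+y_0 b_k\quad (\text{top-down, starting from }b_{m-1}=a_m),
\end{equation*}
\begin{equation*}
b_k=y_0^{-1}(b_{k-1}-a_k)\quad (\text{bottom-up, starting from }b_0=-a_0/y_0,\text{ valid if }y_0\ne 0).
\end{equation*}

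If $|y_0|\le 1$, I use the top-down recursion. The triangle inequality gives $|b_{k-1}|\le |a_k|+|y_0|\,|b_k|\le |a_k|+|b_k|$, and a downward induction on $k$ starting from $|b_{m-1}|=|a_m|$ yields $|b_{k-1}|\le \sum_{j=k}^m |a_j|$ for every $k$, which is stronger than the claim. If instead $|y_0|\ge 1$ (and hence $y_0\ne 0$), I use the bottom-up recursion: $|b_k|\le |y_0|^{-1}(|b_{k-1}|+|a_k|)\le |b_{k-1}|+|a_k|$, with base case $|b_0|=|a_0|/|y_0|\le |a_0|$, and an upward induction gives $|b_k|\le \sum_{j=0}^{k+1}|a_j|$ (a slightly more careful bookkeeping actually yields $|b_k|\le \sum_{j=0}^k |a_j|$, but the weaker inequality already suffices). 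In either case $|b_k|\le \sum_{j=0}^m|a_j|$.

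There is no real obstacle here; the only subtlety is that a single direction of the recursion would bring in a factor of $|y_0|$ or $|y_0|^{-1}$ that cannot be controlled in general. Splitting into the two cases $|y_0|\le 1$ and $|y_0|\ge 1$ (the case $|y_0|=1$ can be placed in either) removes that obstruction and makes the argument a one-line induction in each case.
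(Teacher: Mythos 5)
Your proposal is correct and follows essentially the same route as the paper: both derive the coefficient relations $a_0=-y_0b_0$, $a_k=b_{k-1}-y_0b_k$, $a_m=b_{m-1}$, split into the cases $|y_0|\le 1$ and $|y_0|>1$, and unroll the recursion top-down in the first case and bottom-up in the second so that only powers of $|y_0|$ bounded by $1$ appear. The paper writes the unrolled recursion as an explicit closed-form sum before applying the triangle inequality, whereas you phrase it as an induction, but this is only a cosmetic difference.
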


\begin{proof}
	From the definition we have
	\begin{align} 
	\sum_{k=0}^m a_k y^k
	= 
	-b_0 y_0 + \sum_{k=0}^{m-1} (b_{k-1} - b_k y_0) y^k + b_{m-1} y^m.
	\label{}
	\end{align}
	Hence we obtain	
	\begin{align} 
	0 = a_0 + b_0 y_0; \quad b_k = a_{k+1} +  b_{k+1} y_0, \ k =1,..., m-2; \quad b_{m-1} = a_m.
	\label{a_k,b_k-equalities}
	\end{align}	
	From the last equalities one can see that the lemma is trivial if $y_0 = 0$. Consider the case  $y_0 \not= 0$.	
	If $|y_0| \le 1$, from \eqref{a_k,b_k-equalities} we deduce that 	
	\begin{align} 
	b_k = \sum_{j = k+1}^m a_j y_0^{j-k-1}.
	\label{}
	\end{align}
	and, consequently,
	\begin{align} 
	|b_k| \le  \sum_{j = k+1}^m |a_j| |y_0|^{j-k-1} \le \sum_{j = 0}^m |a_j|.
	\label{}
	\end{align}	
	If $|y_0| > 1$, from \eqref{a_k,b_k-equalities} we deduce that 	
	\begin{align} 
	b_k = - \sum_{j = 0}^k a_j y_0^{-(k+1-j)},
	\label{}
	\end{align}
	and, consequently,
	\begin{align} 
	|b_k| \le  \sum_{j = 0}^k |a_j| |y_0|^{-(k+1-j)} \le \sum_{j = 0}^m |a_j|.
	\label{}
	\end{align}	
	\hfill
\end{proof}

\begin{lemma}\label{lemma: sum |b_ell|}
	Let $b^{s;k}_\ell$ be the polynomial coefficients of $L_{s;k}$ as in the representation \eqref{L_s}.
	Then we have for any $s \in \NN_0$ and $k \in \pi_s$,
	\begin{align} 
	\sum_{\ell=0}^s |b^{s;k}_\ell|
	\le e^{Ks} s! \, ,	
	\end{align} 
	where the constant $K$ are independent of $s$ and $k \in \pi_{s}$.	 	
\end{lemma}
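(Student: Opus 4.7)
The plan is to factor $L_{s;k}$ via the representation $L_{s;k}(y) = A_{s;k}\cdot H_{s+1}(y)/(y - y_{s;k})$ already established in \eqref{L_{s;k}(y)=}, and reduce the desired coefficient bound to two independent ingredients. Writing $q_{s;k}(y) := H_{s+1}(y)/(y-y_{s;k})$, the identity $H_{s+1}(y) = (y-y_{s;k})\,q_{s;k}(y)$ places me directly in the setting of Lemma \ref{lemma: b_k<}, which bounds each coefficient $\beta_\ell$ of $q_{s;k}$ by $\sum_{j=0}^{s+1}|a_j|$, where $(a_j)$ are the monomial coefficients of $H_{s+1}$. Summing over $\ell\in\{0,\ldots,s\}$ yields $\sum_\ell|\beta_\ell|\le(s+1)\sum_j|a_j|$, so it remains to bound $\sum_j|a_j|$ and $|A_{s;k}|$ separately.

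For the Hermite coefficient sum, I would use the explicit formula
\begin{equation*}
H_{s+1}(y) = \frac{1}{\sqrt{(s+1)!}}\sum_{r=0}^{\lfloor(s+1)/2\rfloor}\frac{(-1)^r(s+1)!}{r!\,(s+1-2r)!\,2^r}\,y^{s+1-2r}.
\end{equation*}
Discarding the harmless factor $1/(r!\,2^r)\le 1$ and observing that the exponents $s+1-2r$ are distinct non-negative integers in $\{0,\ldots,s+1\}$ collapses the sum to $\sum_j|a_j|\le\sqrt{(s+1)!}\sum_{j=0}^{s+1}\frac{1}{j!}\le e\sqrt{(s+1)!}$. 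For $|A_{s;k}|$, I would follow the argument already used in the proof of Lemma \ref{lemma: normL_{s;k}<}: from the spacing bound $d_s\ge\pi\sqrt{2}/\sqrt{2s+3}$ in \eqref{d_s} and the monotonicity of the Hermite roots one has $|y_{s;k}-y_{s;k'}|^{-1}\le C\sqrt{s}/|k-k'|$, and after relabeling $\pi_s$ as a block of consecutive integers the product $\prod_{k'\neq k}|k-k'|^{-1}$ equals $1/(m!(s-m)!)$ for some $m\in\{0,\ldots,s\}$. Using the definition \eqref{A_{s;k}}, the bound $\binom{s}{m}\le 2^s$, and the Stirling estimate $s^{s/2}/\sqrt{s!}\le C\,e^{s/2}$ then gives $|A_{s;k}|\le e^{K_1 s}$ for some absolute constant $K_1$.

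Combining these two estimates yields
\begin{equation*}
\sum_{\ell=0}^s|b^{s;k}_\ell| = |A_{s;k}|\sum_{\ell=0}^s|\beta_\ell| \le e^{K_1 s}\cdot(s+1)\cdot e\sqrt{(s+1)!} \le e^{Ks}\sqrt{s!} \le e^{Ks}\,s!,
\end{equation*}
after absorbing polynomial factors in $s$ into the exponential and using $\sqrt{s!}\le s!$. The edge case $s=0$ is trivial since $L_{0;0}\equiv 1$. The main obstacle is the factorial bookkeeping rather than any single estimate: both ingredients contribute a $\sqrt{(s+1)!}$ factor, and only the Stirling cancellation built into the $|A_{s;k}|$ estimate prevents the $s^{s/2}$ growth coming from the reciprocal root-spacing bound from destroying the target $e^{Ks}s!$ bound.
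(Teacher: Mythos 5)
Your proposal is correct and follows essentially the same route as the paper: factor $L_{s;k}$ through $A_{s;k}$ and $H_{s+1}$ via \eqref{L_{s;k}(y)=}, apply Lemma \ref{lemma: b_k<} to transfer the coefficient bound from $H_{s+1}$, use the explicit Hermite expansion for the coefficient sum, and invoke the bound $|A_{s;k}|\le e^{K_1 s}$ already established in \eqref{|A_{s;k}|<}. The only (harmless) difference is that you track the normalization of $H_{s+1}$ carefully and obtain $e\sqrt{(s+1)!}$ for the coefficient sum, which is in fact more consistent with the paper's normalization $\int_{\RR}|H_k|^2 g\,\rd y=1$ than the stated bound \eqref{sum|a_s,ell|<}, and both versions absorb into $e^{Ks}s!$.
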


\begin{proof}
	For $s\in \NN_0$, we represent  the univariate Hermite polynomial $H_s$ in the form
	\begin{align}\label{H_s}
	H_s(y) :=	\sum_{\ell=0}^s a_{s,\ell} y^\ell.
	\end{align} 
	By using the well-known equality
	\begin{align}\label{H_s=}
	H_s(y)
	=
	s! \sum_{\ell=0}^{\left\lfloor \tfrac{s}{2} \right\rfloor} \frac{(-1)^\ell}{\ell!(s - 2\ell)!} \frac{y^{s -  2\ell}}{2^\ell},
	\end{align} 
	one can derive that 
	\begin{equation} \label{sum|a_s,ell|<}
	\sum_{\ell=0}^{s} |a_{s,\ell}| \leq s!.
	\end{equation}
	From \eqref{L_{s;k}(y)=} we have 
	\begin{align} 
	A_{s;k}H_{s+1}(y)
	= (y - y_{s;k})	L_{s;k}(y), 
	\label{	L_{s;k}=}
	\end{align}
	where $A_{s;k}$ is given as in \eqref{A_{s;k}}. By Lemma \ref{lemma: b_k<}, \eqref{sum|a_s,ell|<} and \eqref{|A_{s;k}|<}, we obtain
	\begin{align} 
	\sum_{\ell=0}^s |b^{s;k}_\ell|
	\le
	\sum_{\ell=0}^s A_{s;k}\sum_{\ell'=0}^{s+1} |a_{s+1,\ell'}|
	\le 
	e^{Ks} s!\,. 
	\end{align} 
	\hfill
\end{proof}

\begin{lemma}\label{l:Rm trun1}
	Let $\varphi (\by)= \prod_{j = 1}^m \varphi_j(y_j)$ 
	for  $\by \in \RR^m$, where $\varphi_j$ 
	is a polynomial in the variable $y_j$
	of degree not greater than $\omega$ for 
	$j=1,\ldots,m$. 
	Then there holds 
	\begin{align} 
	\norm{\varphi}{L_2(\RR^m{\setminus}B^m_\omega,\gamma)}
	\le 
	Cm
	\exp \left(- K\omega \right)
	\norm{\varphi}{L_2(\RR^m,\gamma)},
	\label{Rmsqu1}
	\end{align}
	and 	\begin{align} 
	\norm{\varphi}{L_\infty^{\sqrt{g}}(\RR^m{\setminus}B^m_\omega)}
	\le 
	Cm
	\exp \left(- K\omega \right)
	\norm{\varphi}{L_\infty^{\sqrt{g}}(\RR^m)},
	\label{Rmsqu2}
	\end{align}
	where the constants $C$ and $K$ are independent of $\omega$, $m$ and $\varphi$.
\end{lemma}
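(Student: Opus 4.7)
The plan is to reduce both estimates to univariate analogues via the tensor structure $\varphi(\by) = \prod_{j=1}^m \varphi_j(y_j)$, and then to establish the univariate tail bounds using the Hermite expansion together with the exponential decay of Hermite functions outside their turning-point region. First, writing $\RRm\setminus B^m_\omega \subset \bigcup_{j=1}^m E_j$ with $E_j := \{\by\in\RRm : |y_j|>2\sqrt{\omega}\}$, Fubini and the product form of $g_m$ yield
\begin{align*}
\norm{\varphi}{L_2(\RRm\setminus B^m_\omega,\gamma)}^2
& \le \sum_{j=1}^m \brac{\int_{|y|>2\sqrt{\omega}}|\varphi_j(y)|^2 g(y)\,\rd y} \prod_{j'\ne j}\norm{\varphi_{j'}}{L_2(\RR,\gamma)}^2,
\\
\norm{\varphi}{L_\infty^{\sqrt{g}}(\RRm\setminus B^m_\omega)}
& \le \max_{1\le j \le m}\brac{\sup_{|y|>2\sqrt{\omega}}|\varphi_j(y)|\sqrt{g(y)}}\prod_{j'\ne j}\norm{\varphi_{j'}}{L_\infty^{\sqrt{g}}(\RR)}.
\end{align*}

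It therefore suffices to prove the univariate tail estimates: for every polynomial $p$ of degree $\le\omega$,
\begin{align*}
\int_{|y|>2\sqrt{\omega}}|p(y)|^2 g(y)\,\rd y & \le C e^{-K\omega}\,\norm{p}{L_2(\RR,\gamma)}^2,
\\
\sup_{|y|>2\sqrt{\omega}}|p(y)|\sqrt{g(y)} & \le C e^{-K\omega}\,\norm{p}{L_\infty^{\sqrt{g}}(\RR)}.
\end{align*}
With these in hand, the first statement of the lemma follows by summing over $j$ and taking square roots (absorbing the resulting $\sqrt{m}$ into $m$ after halving $K$), and the second follows directly from the maximum over $j$. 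Both univariate bounds rest on the fact that $\{|y|>2\sqrt{\omega}\}$ lies strictly outside the oscillation region of every Hermite function $H_k\sqrt{g}$ with $k\le\omega$: the turning point of $H_k\sqrt{g}$ is at $|y|=\sqrt{4k+2}\le\sqrt{4\omega+2}$, which is separated from $2\sqrt{\omega}$ by a fixed multiplicative factor for all $k\le\omega$.

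For the $L_2$ tail I would expand $p=\sum_{k=0}^\omega c_k H_k$ in the orthonormal Hermite basis, so that $\norm{p}{L_2(\RR,\gamma)}^2=\sum_k|c_k|^2$, and apply Cauchy--Schwarz to obtain $|p(y)|^2 \le (\omega+1)\norm{p}{L_2(\RR,\gamma)}^2 \sum_{k=0}^\omega |H_k(y)|^2$; the problem then reduces to showing that $\int_{|y|>2\sqrt{\omega}}\sum_{k=0}^\omega |H_k(y)|^2 g(y)\,\rd y$ decays like $e^{-K\omega}$. This follows from a pointwise estimate of the form $|H_k(y)|\sqrt{g(y)}\le C\exp\brac{-c(y^2-4k-2)}$ for $y^2>4k+2$, a standard consequence of Plancherel--Rotach asymptotics (equivalently, of a WKB turning-point analysis of the three-term recurrence). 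The $L_\infty^{\sqrt{g}}$ bound follows from the same pointwise decay applied directly to $|p|\sqrt{g}$ combined with Mhaskar's inequality, which recovers $\norm{p}{L_\infty^{\sqrt{g}}(\RR)}$ from the supremum of $|p|\sqrt{g}$ over a compact subinterval of $B^1_\omega$. The main obstacle is keeping the decay exponent $K$ uniform in $k\le\omega$; an alternative that avoids the full Plancherel--Rotach machinery is to invoke Mhaskar's restricted-range inequality directly, with Mhaskar--Rahmanov--Saff number $a_\omega=O(\sqrt{\omega})$ strictly less than $2\sqrt{\omega}$, so that the gap and the associated Lubinsky--Levin-type exponential tail bound are automatic.
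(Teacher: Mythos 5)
Your tensorization is correct and is the natural first step: since the Gaussian measure $\gamma$ on $\RR^m$ is a product measure and $\RR^m\setminus B^m_\omega\subset\bigcup_{j=1}^m\{\by: |y_j|>2\sqrt{\omega}\}$, both inequalities reduce to univariate tail estimates for polynomials of degree at most $\omega$, exactly as you set up; the paper itself gives no argument beyond citing \cite[Lemma 3.3]{DNP21}, so this is the only reasonable route. The genuine gap is in the univariate estimate, and the premise on which you base it is false. For the weight $\sqrt{g(y)}\propto e^{-y^2/4}$, which is the one governing both $L_2(\RR,\gamma)$ and $L_\infty^{\sqrt{g}}(\RR)$, the Mhaskar--Rahmanov--Saff number for degree $\omega$ is \emph{exactly} $a_\omega=2\sqrt{\omega}$ (solve $\omega=\frac{2}{\pi}\int_0^1 at\,\frac{at}{2}(1-t^2)^{-1/2}\,dt=a^2/4$), not strictly less than $2\sqrt{\omega}$; and the turning point of $H_k\sqrt{g}$ is $\sqrt{4k+2}$, which for $k$ near $\omega$ \emph{exceeds} $2\sqrt{\omega}$ rather than being separated from it by a fixed multiplicative factor. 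Thus $\{|y|>2\sqrt{\omega}\}$ is not outside the oscillation region of $H_\omega\sqrt{g}$; it cuts through the Airy transition zone. Quantitatively, $\int_{|y|>2\sqrt{\omega}}H_\omega(y)^2 g(y)\,dy$ is at least the classically forbidden mass $\int_{|y|>\sqrt{4\omega+2}}H_\omega^2 g\,dy$, which is of order $\omega^{-1/3}$, so the choice $m=1$, $\varphi_1=H_\omega$ is incompatible with an $e^{-K\omega}$ bound in \eqref{Rmsqu1}; likewise $|H_\omega|\sqrt{g}$ at $y=2\sqrt{\omega}$ is a fraction of its global maximum bounded away from zero (both points lie in the Airy zone), which rules out \eqref{Rmsqu2} at degree $\omega$. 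Your auxiliary pointwise bound $|H_k(y)|\sqrt{g(y)}\le Ce^{-c(y^2-4k-2)}$ fails for the same reason just beyond the turning point, where the true decay is of Airy type, roughly $\exp\brac{-c'\sqrt{k}\,(y-\sqrt{4k+2})^{3/2}}$, and the Hermite function stays of size $k^{-1/12}$ over a window of width $k^{-1/6}$.

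Everything you wrote does become correct, essentially verbatim, once the degrees are assumed to be at most $\lambda\omega$ for some fixed $\lambda<1$: then all turning points $\sqrt{4k+2}\le\sqrt{4\lambda\omega+2}$ lie below $2\sqrt{\lambda\omega}\,(1+o(1))$, the relative gap $\delta=\lambda^{-1/2}-1>0$ is uniform in $k\le\lambda\omega$, and the restricted-range tail bounds of Levin--Lubinsky type give $\exp(-c\,\omega\,\delta^{3/2})$, hence $e^{-K\omega}$. That weaker statement is also all the paper actually needs: in Lemma \ref{lemma:S-S^omega} the polynomials $L_{s_j-e_j;k_j}$ have degree at most $m_1(\xi)\le K_{q,\theta}\xi^{1/\theta q}$, which is $O(\omega^{1/3})$, far below $\omega$. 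So you should either add the hypothesis $\deg\varphi_j\le\lambda\omega$ with $\lambda<1$ (or enlarge the box to $[-2\sqrt{\Lambda\omega},2\sqrt{\Lambda\omega}]^m$ with $\Lambda>1$) before running your argument; at degree exactly $\omega$ and threshold exactly $2\sqrt{\omega}$ no argument can close the gap, because the claimed inequality itself fails there.
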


\begin{proof}
	The inequality \eqref{Rmsqu1} was proven in \cite[Lemma 3.3]{DNP21}. The inequality \eqref{Rmsqu2} can be proven in a similar way with a slight modification.
\hfill	
\end{proof}	

\subsection{Proof of Theorem \ref{lemma:coll-approx}}
\label{Proof of Theorem ref{lemma:coll-approx}}

\begin{proof} This theorem was proven in \cite[Corollary 3.11]{Dung21} for the case $U=\RRi$. Let us prove it for the case $U=\RRM$.
	By Lemma~\ref{lemma[AbsConv]}  the series \eqref{series} 
	converges  unconditionally  in  the  space $L_2(\RRM,X,\gamma)$ to $v$. 
	Observe that $I_{\Lambda(\xi)} H_\bs = H_\bs$ for every $\bs \in \Lambda(\xi)$ and
	$\Delta_\bs H_{\bs'} = 0$ for every $\bs \not\le \bs'$. Hence for the downward closed set $\Lambda(\xi) \subset \NMO$, we can write
	\begin{equation}\nonumber
	I_{\Lambda(\xi) }v
	\ = \
	I_{\Lambda(\xi) }\Big(\sum_{ \bs \in \NMO} v_\bs \,H_\bs \Big)
	\ =  \
	\sum_{ \bs \in \NMO} v_\bs \,I_{\Lambda(\xi) } H_\bs
	\ =  \
	S_{\Lambda(\xi)} v
	\ + \ \sum_{\bs \not\in \Lambda(\xi) } v_\bs \, I_{\Lambda(\xi)  \cap R_\bs}\, H_\bs,
	\end{equation}
	where $R_\bs:= \{\bs' \in \NMO: \bs' \le \bs\}$ and 
	$$
		S_{\Lambda(\xi)} v:= \sum_{\bs \in \Lambda(\xi) } v_\bs \, H_\bs
	$$
for $v \in L_2(\RRM,X,\gamma)$ represented by the Hermite gpc expansion \eqref{series}.
	This implies
	\begin{equation} \label{v-I}
	\big\|v- I_{\Lambda(\xi)} v\big\|_{L_{\infty}^{\sqrt{g}}(\RRM,X)}
	\ \le \
	\big\|v- S_{\Lambda(\xi)} v\big\|_{L_{\infty}^{\sqrt{g}}(\RRM,X)} 
	+  
	\sum_{\bs  \not\in \Lambda (\xi)}  
	\big\|I_{\Lambda(\xi) \cap R_\bs}\, H_\bs\big\|_{L_{\infty}^{\sqrt{g}}(\RRM)}.
	\end{equation}
	Therefore, to prove the lemma  it is sufficient to  show that each term in the right-hand side is bounded by 	$C\xi^{-(1/q - 1/2)}$.
	The bound of the first term can be obtained from the Cauchy--Schwasz inequality and \eqref{CramerBnd}:
	\begin{equation} \label{v-S}
	\begin{split}
	\|v- S_{\Lambda(\xi)}\|_{L_{\infty}^{\sqrt{g}}(\RRM,X)}
	&\le 
	\sum_{\sigma_{\bs}> \xi^{1/q} } \|v_\bs\|_{X} \, \|H_\bs\|_{L_{\infty}^{\sqrt{g}}(\RRM)}
	\le 
	\sum_{\sigma_{\bs}> \xi^{1/q} } \|v_\bs\|_{X} 
	\\[1.5ex]
	&\le 
	\left(\sum_{\sigma_{\bs}> \xi^{1/q} } (\sigma_{\bs}\|v_\bs\|_{X})^2\right)^{1/2}
	\left(\sum_{\sigma_{\bs}> \xi^{1/q} } \sigma_{\bs}^{-2}\right)^{1/2}
	\le 
	C\,
	\left(\sum_{\sigma_{\bs}> \xi^{1/q} }  \sigma_{\bs}^{-q} 
	\sigma_{\bs}^{-(2- q)}\right)^{1/2}
	\\[1.5ex]
	&\le 
	C \xi^{-(1/q - 1/2)}
	\left(\sum_{\bs \in \NN_0^M} \sigma_{\bs}^{-q} \right)^{1/2}
	\le C \xi^{-(1/q - 1/2)}.
	\end{split}
	\end{equation} 
	
	Let us prove the bound of the second term in the right-hand side of \eqref{v-I}. We have that
	\begin{equation} \label{norm-estimate}
	\big\|I_{\Lambda(\xi)  \cap R_\bs}\, H_\bs\big\|_{L_{\infty}^{\sqrt{g}}(\RRM)}
	\ \le \
	\sum_{\bs' \in \Lambda(\xi)  \cap R_\bs}\, \|\Delta_{\bs'} (H_\bs)\big\|_{L_{\infty}^{\sqrt{g}}(\RRM)}.
	\end{equation}
	We estimate the norms inside the  right-hand side.  For $\bs \in \NMO$	and $\bs' \in \Lambda(\xi)  \cap R_\bs$,  we have
	$
	\Delta_{\bs'} (H_\bs) = \prod_{j=1}^M \Delta_{s'_j} (H_{s_j}).
	$
	From Lemma  \ref{lemma[Delta_{bs}]} and \eqref{CramerBnd}  we deduce that
	\begin{equation} \nonumber
	\begin{split}
	\|\Delta_{s'_j} (H_{s_j})\|_{L_\infty^{\sqrt{g}}(\RR)}
	\ \le \
	(1 + C_\varepsilon s'_j)^{1/6 + \varepsilon} \, \|H_{s_j}\|_{L_\infty^{\sqrt{g}}(\RR)}
	\ \le \
	(1 + C_\varepsilon s'_j)^{1/6 + \varepsilon}, 
	\end{split}
	\end{equation}
	and consequently,
	\begin{equation}  \label{[|Delta_{bs'}|<]4}
	\|\Delta_{\bs'} (H_\bs)\big\|_{L_{\infty}^{\sqrt{g}}(\RRM)}
		\ = \
	\prod_{j=1}^M \|\Delta_{s'_j} (H_{s_j})\|_{L_\infty^{\sqrt{g}}(\RR)}
	\ \le \
	p_{\bs'}(\theta_1,\lambda)
	\ \le \ 
	p_\bs(\theta_1, \lambda),
	\end{equation}
	where $\theta_1 =  1/6 + \varepsilon$ and recall that  $\lambda = C_\varepsilon$. 
	Substituting $\|\Delta_{\bs'} (H_\bs)\big\|_{L_{\infty}^{\sqrt{g}}(\RRM)}$ in \eqref{norm-estimate} by the right-hand side of \eqref{[|Delta_{bs'}|<]4}  gives that
	\begin{align} \nonumber
	\big\|I_{\Lambda(\xi)  \cap R_\bs}\, H_\bs\big\|_{L_{\infty}^{\sqrt{g}}(\RRM)}
	\ &\le \
	\sum_{\bs' \in \Lambda(\xi)  \cap R_\bs}\, p_\bs(\theta_1, \lambda) 
	\ \le \
	|R_\bs|\, p_\bs(\theta_1, \lambda)
	\\
	&\le \
	p_\bs(1,1)\, p_\bs(\theta_1, \lambda)
	\ \le \
	p_\bs(\theta/2, \lambda).
	\notag
	\end{align}	
	By using of  the last estimates  and the assumption 
	$\norm{\bp(\theta/q,\lambda)\bsigma^{-1}}{\ell_q(\NN_0^M)} \le C < \infty$  with a positive constant $C$ independent of $M$,
	we derive 	the bound of the second term in the right-hand side of \eqref{v-I}:
	\begin{equation} \nonumber
	\begin{split}
	\sum_{\bs  \not\in \Lambda (\xi)} 
	\big\|I_{\Lambda(\xi) \cap R_\bs}\, H_\bs\big\|_{L_{\infty}^{\sqrt{g}}(\RRM)}
	&\ \le  \
	C \sum_{\bs  \not\in \Lambda (\xi)}  \|v_\bs\|_{X}  \, p_\bs(\theta/2,\lambda)
	\\
	\ &\le \
	C	\left(\sum_{\sigma_{\bs}> \xi^{1/q} } (\sigma_{\bs}\|v_\bs\|_{X})^2\right)^{1/2}
	\left(\sum_{\sigma_{\bs}> \xi^{1/q} } p_\bs(\theta/2,\lambda)^2 \sigma_{\bs}^{-2}\right)^{1/2}
	\\[1.5ex]
	\ &\le \
	C\,
	\left(\sum_{\sigma_{\bs}> \xi^{1/q} } p_\bs(\theta/2,\lambda)^2 \sigma_{\bs}^{-q} 
	\sigma_{\bs}^{-(2- q)}\right)^{1/2}
	\\[1.5ex]
	\ &\le \
	C \xi^{-(1/q - 1/2)}
	\left(\sum_{\bs \in \NN_0^M} p_\bs(\theta,\lambda) \sigma_{\bs}^{-q} \right)^{1/2} 
	\le C \xi^{-(1/q - 1/2)},
	\end{split}
	\end{equation} 
	which together with \eqref{v-I} and \eqref{v-S} proves the theorem.
	\hfill
\end{proof}

\bibliographystyle{abbrv}

\bibliography{AllBib.bib}
\end{document}